\documentclass[10pt]{amsart}
\usepackage[T1]{fontenc}
\usepackage{lmodern}
\usepackage{amsmath,amssymb,mathtools}
\usepackage{mathrsfs}
\usepackage{microtype}
\usepackage{booktabs}
\usepackage[hyphens]{url}
\usepackage[hidelinks]{hyperref}
\usepackage{xcolor}
\usepackage[all]{xy}
\usepackage{tikz}
\usetikzlibrary{arrows.meta}

\hypersetup{
 colorlinks,
 linkcolor={teal},
 citecolor={teal},
 urlcolor={teal}
}

\calclayout

\newtheorem{theorem}{Theorem}[section]
\newtheorem{proposition}[theorem]{Proposition}
\newtheorem{lemma}[theorem]{Lemma}
\newtheorem{corollary}[theorem]{Corollary}
\theoremstyle{definition}
\newtheorem{definition}[theorem]{Definition}
\newtheorem{example}[theorem]{Example}
\theoremstyle{remark}

\theoremstyle{plain}
\newtheorem*{theoremA}{Theorem A}
\newtheorem*{theoremB}{Theorem B}
\newtheorem*{theoremC}{Theorem C}
\numberwithin{equation}{section}

\DeclareMathOperator{\Mag}{Mag}
\DeclareMathOperator{\MH}{MH}
\DeclareMathOperator{\MC}{MC}
\DeclareMathOperator{\OS}{OS}

\DeclareMathOperator{\wt}{wt}
\DeclareMathOperator{\Fun}{Fun}
\DeclareMathOperator{\gr}{gr}
\DeclareMathOperator{\supp}{supp}
\DeclareMathOperator{\cl}{cl}
\DeclareMathOperator{\Flag}{Flag}
\newcommand{\OM}{\mathcal M}
\newcommand{\cT}{\mathcal T}
\newcommand{\cL}{\mathcal L}
\newcommand{\cF}{\mathcal F}
\newcommand{\Ftwo}{\mathbb F_2}
\newcommand{\eps}{\varepsilon}
\newcommand{\Z}{\mathbb Z}
\newcommand{\Q}{\mathbb Q}
\newcommand{\R}{\mathbb R}
\newcommand{\N}{\mathbb N}
\newcommand{\one}{\mathbf 1}

\title[Magnitude and motivic zeta functions of matroids]
{Magnitude and motivic zeta functions of matroids}
\author[J. Koizumi]{Junnosuke Koizumi}
\address{RIKEN iTHEMS, Wako, Saitama 351-0198, Japan}
\email{junnosuke.koizumi@riken.jp}
\date{}
\subjclass[2020]{Primary 05B35, 52C40; Secondary 05C12, 52C35, 55N35}
\keywords{Matroid, oriented matroid, magnitude, magnitude homology,
motivic zeta function, Varchenko--Gelfand filtration}

\begin{document}
\begin{abstract}
We prove that the magnitude of the tope graph of a simple oriented matroid
is the specialization at $x=-1$ of the motivic zeta function of its
underlying matroid.
This refines the Las Vergnas--Zaslavsky theorem and defines magnitude
for arbitrary matroids.
For simple orientable matroids, we compute the order of the pole at
$q=-1$ from chains of flats and identify it with the
Varchenko--Gelfand degree of the tope parity function.
Comparing pole orders, we construct a rank-six real arrangement
$\mathcal A$ such that $\Mag(\mathcal A;-t)$ has infinitely many
negative coefficients, disproving Koizumi--Liu's eventual sign
alternation conjecture.
We then construct a canonical multiplicative Varchenko--Gelfand
filtration on the mod-$2$ magnitude cohomology of a simple oriented
matroid and show that its graded dimensions recover the full motivic
zeta function.
\end{abstract}
\maketitle
\enlargethispage{4pt}
\setcounter{tocdepth}{1}
\tableofcontents

\section{Introduction}\label{sec:introduction}

Zaslavsky's theorem relates the geometry of a real hyperplane arrangement
to the combinatorics of its intersection lattice: the number of chambers
of a central arrangement is $(-1)^r\chi_M(-1)$, where $M$ is the corresponding matroid
and $r=r(M)$ \cite{Zaslavsky}.
Las Vergnas \cite{LasVergnas} showed that the same formula counts topes of a loopless oriented matroid.
The chambers are the vertices of the \emph{tope graph}, in which two chambers are adjacent when separated by
a single hyperplane, and the distance between two chambers counts the
hyperplanes separating them.
It is therefore natural to ask whether the chamber-counting formula
extends to an invariant that records these distances.

Magnitude is a candidate for such an invariant.
Defined by Leinster \cite{LeinsterMetric,Leinster} for metric spaces, and in particular for graphs, magnitude is a rational-function invariant with properties resembling cardinality.
In the expansion of the magnitude $\Mag(G;q)$ of a graph $G$ at $q=0$,
the constant term gives the number of vertices, and the linear term gives the number of edges.
Koizumi--Liu \cite{KL} initiated the study of magnitude and magnitude homology of tope graphs of real hyperplane arrangements.
They gave a recursive formula for computing magnitude from localizations and showed, among other things, that its poles are roots of unity other than $1$.

In the present paper, we prove that the \emph{motivic zeta function} of Jensen--Kutler--Usatine
\cite{JKU} gives the corresponding refinement of the characteristic polynomial.
For a function on a smooth variety, its motivic zeta function is a motivic analogue of the $p$-adic Igusa zeta function, defined by Denef--Loeser \cite{DL} using motivic integration.
Kutler--Usatine \cite[Theorem~1.10]{KU} showed that, for hyperplane arrangements, this function admits a combinatorial expression.
Jensen--Kutler--Usatine \cite{JKU} extended it to a rational function
$Z_M(x,T)$ defined for an arbitrary matroid $M$.
We use its combinatorial definition, recalled in
Section~\ref{sec:preliminaries}, throughout.
Its constant term is
\[
 Z_M(x,0)=x^{-r(M)}\chi_M(x).
\]
Our first main theorem refines the Las Vergnas--Zaslavsky theorem in terms of magnitude and the motivic zeta function:

\begin{theoremA}[Theorem~\ref{thm:zaslavsky}]
Let $\OM$ be a finite simple oriented matroid, with underlying matroid
$M$ and tope graph $G$.
Then
\begin{equation}\label{eq:intro-zaslavsky}
 \Mag(G;q)=Z_M(-1,q).
\end{equation}
\end{theoremA}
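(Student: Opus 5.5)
The plan is to compute $\Mag(G;q)$ from an explicit weighting on topes and then to recognize the result as the chain-of-flats expansion of $Z_M(-1,q)$. Two facts are used throughout. First, in the tope graph the distance $d(T,T')$ equals $|S(T,T')|$, the number of elements separating $T$ and $T'$. Hence $q^{d(T,T')}=\prod_{e\in E}q^{[T_e\neq T'_e]}$, and simplicity of $\OM$ makes this product faithful. Second, for every flat $F$ the localizations at $F$ are again oriented matroids: the restriction $\OM|F$ and the contraction $\OM/F$. Their topes are counted by Las Vergnas--Zaslavsky as $(-1)^{r}\chi(-1)$ of the corresponding minors of $M$.

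\textbf{Step 1: look for a weighting built from faces.} I will look for a weighting of the form
\[
w(T)=\sum_{X\le T} c\bigl(z(X)\bigr),
\]
where $X$ runs over covectors below the tope $T$, and $z(X)$ is the flat of elements on which $X$ vanishes. The coefficient $c(F)$ should be a rational function in $q$ depending only on the minors $M|F$ and $M/F$.

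The condition to verify is
\[
\sum_{T'}q^{d(T,T')}w(T')=1 \qquad\text{for every tope } T.
\]
Exchanging sums turns the left side into a sum over covectors $X$ of $c(z(X))\sum_{T'\ge X}q^{d(T,T')}$. The topes above $X$ correspond to topes of the restriction $\OM|z(X)$. The distance from $T$ splits as the fixed part $|S(T,X)\setminus z(X)|$ plus the tope-graph distance inside $\OM|z(X)$ from the localized tope $T|_{z(X)}$. So the inner sum is $q^{|S(T,X)|}$ times a row sum of the similarity matrix of the smaller oriented matroid $\OM|z(X)$.

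\textbf{Step 2: set up the induction.} I will argue by induction on rank, assuming Theorem A for all proper restrictions and contractions. Then these row sums, and the resulting sums over the faces of a fixed tope, can be controlled by the inductive hypothesis. The coefficients $c(F)$ are then forced recursively, by a M\"obius-type inversion over the lattice of flats. This is the same shape as the Koizumi--Liu recursion for magnitude via localizations.

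\textbf{Step 3: sum the weights.} Granting the weighting, I get
\[
\Mag(G;q)=\sum_T w(T)=\sum_{X} c\bigl(z(X)\bigr)\cdot\#\{T\ge X\}.
\]
Grouping covectors by their zero flat $F$, the number of covectors with $z(X)=F$ is the number of topes of $\OM/F$, namely $(-1)^{r(M/F)}\chi_{M/F}(-1)$. The number of topes above each of them is $(-1)^{r(F)}\chi_{M|F}(-1)$. Iterating the recursion for $c$ produces a sum over chains $\emptyset=F_0\subsetneq F_1\subsetneq\cdots$ of flats. Each chain contributes a product of characteristic polynomials of the minors $(M|F_{i+1})/F_i$ evaluated at $-1$, times factors of the form $q^{|F_i|}$ over denominators coming from inverting the row sums.

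I then compare this term by term with the combinatorial definition of $Z_M(x,T)$ from Section~\ref{sec:preliminaries}, specialized at $x=-1$ and $T=q$. The signs $(-1)^{r}$ combine with the $x^{-r}$ normalization, and the factor $x^{r(F)}$ in each denominator becomes $\pm1$.

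\textbf{Expected obstacle.} The hardest step is Step 1: proving that the candidate weighting really satisfies the weighting equation for every tope $T$, including those far from the faces involved. The difficulty is the sum over faces $X$ of the twisted quantity $q^{|S(T,X)|}$. My first attempt will be a sign-reversing, shelling-type argument on the face poset of the tope $T'$ side. Concretely, I will use that the faces of a tope form a shellable ball and that the Euler characteristic of the set of faces visible from $T$ is $0$ or $1$; this is the same mechanism that underlies the Las Vergnas--Zaslavsky count itself. If that fails, the fallback is to bypass weightings entirely: show directly that both sides of Theorem A satisfy the Koizumi--Liu localization recursion with the same initial values in rank at most one.
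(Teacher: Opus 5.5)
\textbf{There is a genuine gap in your Step~1, and Step~2 does not close it.}

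With a scalar coefficient $c(z(X))$ on each face, exchanging sums turns the weighting equation at $T$ into
\[
 \sum_{X\in\cL}c(z(X))\,q^{|S(T,X)|}\rho_X(T)=1,\qquad
 \rho_X(T)=\sum_{U\in\cT(\OM|z(X))}q^{d(T|_{z(X)},U)}.
\]
The row sums $\rho_X(T)$ depend on the local tope $T|_{z(X)}$, not only on the flat $z(X)$. For a rank-three flat with restriction $U_{3,4}$, triangular and quadrilateral local chambers give $1+3q+\cdots$ and $1+4q+\cdots$ respectively.

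Your inductive hypothesis cannot control these quantities. Theorem~A for $\OM|z(X)$ computes only $\one^{\mathsf T}K^{-1}\one$; it says nothing about the individual entries of $K\one$ or of $K^{-1}\one$. What remains is one equation per tope with only one unknown per flat. Nothing in the proposal shows this system is solvable, and the ``M\"obius-type inversion'' has no flat-indexed equations to invert. The ansatz does work when every proper localization has a vertex-transitive tope graph, for instance in rank at most three, because then the local weightings are constant. That is not the general situation.

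\textbf{The missing idea} is to attach to each face its full local weighting rather than a scalar. Put
\[
 \widetilde w(T)=\sum_{X\le T}(-1)^{r(z(X))}q^{|z(X)|}w_X(T),
\]
where $w_X$ is the magnitude weighting of $G(\OM|z(X))$, transported to the topes above $X$.

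\begin{itemize}
\item Your distance splitting says that the topes above $X$ form a gated subgraph with gate $X\circ T$. Combined with the weighting equation for $w_X$, each inner sum becomes exactly $q^{|S(T,X)|}$.
\item The weighting equation for $\widetilde w$ then reduces to the identity $\sum_{X\in\cL}(-1)^{r(z(X))}q^{|z(X)|+|S(T,X)|}=1$.
\item This identity is where your shelling idea belongs, and it needs no induction. Expand in powers of $q-1$. The coefficient of $(q-1)^{|A|}$ is, up to sign, $\chi(K_A)-1$, where $K_A$ is the complex of faces with $X_e\in\{0,-T_e\}$ for all $e\in A$. For $A=\varnothing$ this is the whole sphere. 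Otherwise it is a nonempty proper initial segment of a shelling based at $-T$, hence a ball, so the coefficient vanishes.
\end{itemize}

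Summing $\widetilde w$ over topes gives
\[
 \Mag(G(\OM);q)=\sum_X(-1)^{r(z(X))}q^{|z(X)|}\Mag(G(\OM|z(X));q).
\]
So in your Step~3 the tope count $\#\{T\ge X\}$ must be replaced by the magnitude of the localization; this is where the inductive hypothesis actually enters. Grouping faces by zero flat and applying Las Vergnas--Zaslavsky to $\OM/F$ gives a recursion over flats.

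$Z_M(-1,q)$ satisfies the same recursion. To see this, split off $A=\supp(w)$, which must be a flat, and write $w|_A=\one_A+v$. This gives
\[
 Z_M=x^{-r}\sum_A\chi_{M/A}(x)T^{|A|}Z_{M|A}.
\]
Now move the $A=E$ term to the left; its coefficient $1-(-1)^rq^{|E|}$ is a unit in $\Z[[q]]$. Induction on $|E|$, with the empty matroid as base case, finishes the proof.

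Your fallback has the right shape, but it cannot simply quote the Koizumi--Liu recursion. That recursion was proved only for real arrangements, using convexity. For a nonrealizable $\OM$, establishing it is exactly the argument just described.
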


At $q=0$, \eqref{eq:intro-zaslavsky} recovers the Las Vergnas--Zaslavsky
formula.
In particular, the magnitude of the tope graph depends only on the
underlying matroid, even though the graph depends on the orientation.
The proof first extends Koizumi--Liu's face decomposition formula to
all simple oriented matroids.
Grouping faces by their zero sets then gives the same recursion as the
specialized motivic zeta function.

This identity leads to a definition for every matroid $M$:
\[
 \Mag(M;q):=Z_M(-1,q).
\]
The resulting invariant is rational, multiplicative under direct sums,
and valuative.
For loopless matroids it satisfies
\[
 \Mag(M;q^{-1})=q^{|E|}\Mag(M;q),\qquad \Mag(M;1)=1,
\]
and its poles are roots of unity other than $1$.
These properties extend those of arrangement magnitude in \cite{KL}.
Orientability places further restrictions: for a nonempty simple
orientable matroid, magnitude is greater than $1$ throughout $(-1,1)$
and has no real zeros.
By contrast, we show that $\Mag(\operatorname{PG}(8,2);9/10)<0$.
Therefore, the positivity of magnitude is a phenomenon specific to orientable matroids.

Magnitude (co)homology \cite{HW,Hepworth} refines magnitude by replacing
each coefficient with a graded group.
It has a cohomological degree $k$ and a length degree $\ell$; the alternating
sum of dimensions over $k$ at fixed length $\ell$ is the coefficient
of $q^\ell$ in magnitude.
In bidegree $(0,0)$, magnitude cohomology is the algebra of functions
on topes, the Varchenko--Gelfand algebra \cite{VG}.
Its Heaviside functions indicate the positive side of each coordinate.
Filtering by polynomial degree in these functions recovers the
characteristic polynomial from the dimensions of graded pieces.
We extend this filtration to all of magnitude cohomology and show that the dimensions of its graded pieces recover the full motivic zeta function.

\begin{theoremB}[Theorems~\ref{thm:zeta-filtration} and
\ref{thm:associated-graded}]
Let $G$ be the tope graph of a finite simple oriented matroid $\OM$ with
underlying matroid $M$.
Then its mod-$2$ magnitude cohomology has a canonical multiplicative filtration
$F$ extending the Varchenko--Gelfand filtration in degree $(0,0)$.
If
\[
 \mathscr H_{\OM}(s,t,T)=\sum_{p,k,\ell\geq0}
 \dim_{\Ftwo}\gr_p^F\MH^{k,\ell}(G;\Ftwo)\,s^pt^kT^\ell,
\]
then
\begin{equation}\label{eq:intro-filtered-zeta}
 Z_M(x,T)=\mathscr H_{\OM}(-x^{-1},x^{-1},T).
\end{equation}
The associated graded algebra is commutative and is determined by $M$.
\end{theoremB}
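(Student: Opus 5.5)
The plan is to build the filtration from a decomposition of magnitude cohomology of tope graphs into local pieces indexed by faces, and then to compare graded dimensions with the recursive formula for $Z_M$.

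\textbf{Local decomposition.} The first ingredient is the face decomposition used for Theorem~A, extended to all simple oriented matroids. For a tope graph, magnitude chains of length $\ell$ split according to the pair (start tope, end tope) and the geodesic interval between them. Since tope graphs are partial cubes whose intervals are governed by separating sets, I expect the magnitude cohomology $\MH^{k,\ell}(G;\Ftwo)$ to decompose, after reduction to a cellular or Koszul-type complex, as a module over $\MH^{0,0}=\Fun(\cT,\Ftwo)$. The Varchenko--Gelfand algebra acts on both endpoints, so $\MH^{*,*}$ is a bimodule over it. Its support should be described by faces $X$, with local contributions from the contractions and restrictions at the flat $z(X)$.

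\textbf{Construction of $F$.} The natural choice is to define $F_p\MH^{*,*}$ as the image of products of at most $p$ Heaviside functions $h_e$ acting on a canonical ``minimal'' generating subspace. A more invariant option, which I would try first, is to filter by $F_p = \sum_{i+j\le p} \mathcal{VG}_{\le i}\cdot \MH^{*,*}\cdot \mathcal{VG}_{\le j}$, where the subspace $\MH^{*,*}$ between the two factors is the part generated by classes of ``length-zero degree''. With this definition, multiplicativity is immediate from the Varchenko--Gelfand filtration being multiplicative and from the Yoneda/cup product respecting the bimodule action. Over $\Ftwo$, $h_e^2=h_e$, and $h_e$ and $1-h_e$ differ by a unit, so the filtration does not depend on the choice of sides; this gives canonicity. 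The difference $h_e\cdot c - c\cdot h_e$ is supported on chains crossing $e$. In the associated graded these commutators drop filtration degree, which yields commutativity of $\gr^F$.

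\textbf{Dimension count.} For each flat $F'$ of $M$, the contribution to $\MH$ from faces with zero set $F'$ should be identified, in $\gr^F$, with $\gr\mathcal{VG}(\OM/F')$ tensored with a piece coming from the localization $\OM|_{F'}$. The first factor has Poincaré polynomial $\sum_p \dim\gr_p\, s^p=$ the reduced characteristic-polynomial data of $M/F'$ by Varchenko--Gelfand. The second factor carries the $t^kT^\ell$ grading from chains of length $\ell$ and homological degree $k$. Summing over chains of flats gives a recursion for $\mathscr H_\OM$. I would then check that the substitution $s=-x^{-1}$, $t=x^{-1}$ turns this recursion into the combinatorial recursion defining $Z_M(x,T)$ from Section~\ref{sec:preliminaries}, exactly as $t=-1$ with $s$ suppressed recovers Theorem~A. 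This comparison also shows that $\gr^F$, as a graded algebra, depends only on $M$: it is assembled from $\gr\mathcal{VG}$ of minors, which are determined by the matroid (Cordovil, Gelfand--Rybnikov), together with their multiplicative gluing.

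\textbf{Main obstacle.} The hard step is proving that $F$ is compatible with the face decomposition. One needs the Heaviside action to be strict with respect to the local pieces, so that $\gr^F$ really splits as the stated tensor products, rather than only after taking Euler characteristics, which is what Theorem~A gives. My first attempt would be to pass to the associated graded of the chain complex itself. There, Heaviside functions become nilpotent-free Orlik--Solomon-like generators, and acyclicity of each filtered piece would follow from shellability of tope intervals.
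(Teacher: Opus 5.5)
Your proposal has a genuine gap: it never computes the cohomology. Every assertion of Theorem~B depends on that computation. Theorem~A does not supply it, because it only controls Euler characteristics.

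\textbf{The homology computation.} The paper first refines length to the crossing vector $w\in\N^E$. It then proves (Theorem~\ref{thm:homology}, extending Koizumi and Liu) that $\MH_{k,w}(G;\Z)_{\bullet\to C}$ is $\Z$ if $k=\kappa(w)$ and $C\setminus U_j(w)$ is a covector for every nonempty upper level set, and is $0$ otherwise. By Lemma~\ref{lem:flag-topes}, for fixed $w$ the nonzero summands are indexed by the topes of the initial oriented matroid $\OM_w$. The proof needs two ingredients:
\begin{enumerate}
\item localization to $\supp(w)$, whose vanishing half uses the tope-interval theorem, Alexander duality in the staircase balls $D_\alpha$, and the acyclic-poset gluing of Lemma~\ref{lem:acyclic-support};
\item periodicity.
\end{enumerate}
Your ``local pieces at faces with zero set $F'$'' only guess at the localization step. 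Without the vanishing statement, and without concentration of each crossing vector in the single degree $\kappa(w)$, you cannot read off $\dim\gr_p^F\MH^{k,\ell}$ bidegree by bidegree. Once the computation is available, no recursion is needed: $\mathscr H_{\OM}=\sum_{w\in\mathcal W(M)}\bigl(\sum_p b_p(M_w)s^p\bigr)t^{\kappa(w)}T^{|w|}$, and \eqref{eq:vg-characteristic} turns each summand into the corresponding term of \eqref{eq:zeta-definition}. Your recursion over flats has the right shape and would reduce to Lemma~\ref{lem:zeta-flat-recursion}, but it presupposes exactly this input.

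\textbf{The construction of $F$.} This has a second gap. The subspace ``generated by classes of length-zero degree'' is never specified. Multiplicativity is also not immediate: $F_p\smile F_q$ contains products $W\smile P_m\MH^{0,0}(G;\Ftwo)\smile W$, so you must know how the middle classes multiply and how Heaviside functions pass through them.

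Your chain-level remedy does not help. On the part of the cochain complex dual to $\MC_{*,w}(G)_{\eps_wC\to C}$, a vertex function $g$ acts by the scalar $g(C)$ from the right and $g(\eps_wC)$ from the left. Heaviside degree only becomes meaningful once the nonzero endpoint summands are canonically identified with one another.

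The paper gets this identification because, over $\Ftwo$, each rank-one summand has a unique nonzero class $u_{w,C}$. This gives the isomorphism $\Phi_w$ of \eqref{eq:Phi} and $F_pH_w=\Phi_w(P_pV(\OM_w))=c_w\smile P_p\MH^{0,0}(G;\Ftwo)$, where $c_w=\sum_{C\in\cT_w}u_{w,C}$. Taking your middle subspace to be the span of the $c_w$ recovers exactly this filtration, but even defining the $c_w$ requires the rank-one computation.

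\textbf{Multiplicativity and commutativity.} These, and the dependence of $\gr^F$ on $M$ alone, follow from the cup product formula of Theorem~\ref{thm:cup-product} (Liu's theorem transported to oriented matroids) together with Lemma~\ref{lem:tope-compatibility}. Together they give $\Phi_\alpha(f)\smile\Phi_\beta(g)=\Phi_\gamma\bigl(C\mapsto f(\eps_\beta C)g(C)\bigr)$ when $\gamma=\alpha+\beta\in\mathcal W(M)$ and $\kappa(\gamma)=\kappa(\alpha)+\kappa(\beta)$, and zero otherwise.

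Your remark that left and right multiplication by $h_e$ differ by $w_e\bmod 2$ is the right local mechanism. However, it only handles commutators with degree-zero classes. Commutativity also requires $c_\alpha\smile c_\beta=c_\beta\smile c_\alpha$ and a matroid-theoretic criterion for when such products vanish, and only the product formula supplies these.

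Finally, $h_e$ and $1+h_e$ do not differ by a unit. They differ by the constant $1$, which is why such substitutions preserve degree.
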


Over $\Ftwo$, each rank-one summand of the (co)homology gives
a unique nonzero element.
These classes make the construction independent of choices of
generators; the distinction from integral coefficients is discussed
in Section~\ref{sec:filtration}.
Setting $T=0$ in Theorem~B yields a description of the characteristic polynomial in terms of the Varchenko--Gelfand filtration.
Setting $x=-1$ corresponds to taking the alternating sum of the ranks of magnitude homology, thereby recovering Theorem~A.

\vskip\baselineskip

\begin{center}
\begin{tikzpicture}[
  x=\linewidth,
  invariant/.style={draw, line width=.5pt, align=center,
    font=\footnotesize, inner xsep=5pt, inner ysep=6pt,
    minimum height=10mm},
  arrow/.style={-{Stealth[length=2mm]}, line width=.6pt,
    shorten <=2mm, shorten >=2mm},
  explanation/.style={font=\footnotesize, align=center, inner sep=5pt}
]
  \node[invariant, minimum width=.265\linewidth] (filtered) at (.145,0)
    {Filtered magnitude cohomology\\[4pt]
     $F_\bullet\MH^{k,\ell}(G;\Ftwo)$};
  \node[invariant, minimum width=.22\linewidth] (zeta) at (.515,0)
    {Motivic zeta function\\[4pt] $Z_M(x,T)$};
  \node[invariant, minimum width=.25\linewidth] (characteristic) at (.86,0)
    {Characteristic polynomial\\[4pt] $x^{-r}\chi_M(x)$};

  \node[invariant, minimum width=.255\linewidth] (cohomology) at (.145,-2.5)
    {Magnitude cohomology\\[4pt] $\MH^{k,\ell}(G;\Ftwo)$};
  \node[invariant, minimum width=.175\linewidth] (magnitude) at (.515,-2.5)
    {Magnitude\\[4pt] $\Mag(G;q)$};
  \node[invariant, minimum width=.19\linewidth] (topes) at (.86,-2.5)
    {Number of topes\\[4pt] $|V(G)|$};

  \draw[arrow] (filtered.east) --
    node[explanation, above] {Theorem B} (zeta.west);
  \draw[arrow] (zeta.east) --
    node[explanation, above] {$T=0$} (characteristic.west);
  \draw[arrow] (filtered.south) --
    node[explanation, right] {Forget the\\filtration} (cohomology.north);
  \draw[arrow] (zeta.south) --
    node[explanation, right] {$x=-1$\\$T=q$} (magnitude.north);
  \draw[arrow] (characteristic.south) --
    node[explanation, right] {$x=-1$} (topes.north);
  \draw[arrow] (cohomology.east) --
    node[explanation, above] {Euler\\characteristic} (magnitude.west);
  \draw[arrow] (magnitude.east) --
    node[explanation, above] {$q=0$} (topes.west);
\end{tikzpicture}
\end{center}

\vskip\baselineskip

The top-left arrow takes graded dimensions and substitutes
$(s,t)=(-x^{-1},x^{-1})$ as in Theorem~B.
The middle and right vertical arrows are Theorem~A and the classical
Las Vergnas--Zaslavsky formula, respectively.

For real arrangements, Koizumi \cite[Theorem~6.4]{Koizumi} computes
the integral homology using face flags, which are nested sequences of
faces incident to a chamber.
Liu \cite{Liu} refines this calculation into rank-one summands indexed
by the numbers of crossings of each hyperplane and a terminal chamber,
and determines the mod-$2$ cup product.
To prove Theorem~B, we extend these results to simple oriented matroids
in Section~\ref{sec:homology}.
We replace the geometric
vanishing theorem used in Koizumi's localization argument by a proof
using tope intervals and finite posets.
The filtration then uses the oriented matroid form of the
Varchenko--Gelfand theorem developed in \cite{Moseley,SY}.

We determine the pole order of magnitude at $q=-1$ for simple orientable matroids.
For a loopless matroid $M$, let $\lambda(M)$ be the maximum length $p$
of a chain of flats
\[
 \varnothing=F_0\subsetneq F_1\subsetneq\cdots\subsetneq F_p,
 \qquad r(F_i)=i,
 \qquad |F_i\setminus F_{i-1}|\text{ odd}.
\]
The chain need not end at $E$.

\begin{theoremC}[Theorem~\ref{thm:pole-order} and
Proposition~\ref{prop:parity-degree}]
For a simple orientable matroid $M$, the order of the pole of
$\Mag(M;q)$ at $q=-1$ is $\lambda(M)$, and
\[
 \lim_{q\to-1}(1+q)^{\lambda(M)}\Mag(M;q)>0.
\]
This pole order also equals the Varchenko--Gelfand degree over $\R$
of the tope parity function $B\mapsto\prod_{e\in E}B_e$,
where the signs $B_e$ are identified with $1$ and $-1$.
\end{theoremC}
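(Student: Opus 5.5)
We work from the combinatorial formula for $Z_M(x,T)$ recalled in Section~\ref{sec:preliminaries}, specialized at $x=-1$. Expanded over chains of flats $\varnothing=F_0\subsetneq F_1\subsetneq\cdots\subsetneq F_p$, it becomes a sum of terms of the form
\[
 c(\text{chain})\prod_{i=1}^{p}\frac{T^{|F_i\setminus F_{i-1}|}}{1-T^{|F_i\setminus F_{i-1}|}}.
\]
The coefficient $c$ is built from characteristic-polynomial data of the minors $M|F_i/F_{i-1}$ and of the contraction $M/F_p$, evaluated at $-1$. The factor $1-T^{m}$ vanishes at $T=-1$ exactly when $m$ is even, so the first step is to read off each chain's pole contribution. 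We would write each step as a sum of its rank-one increments and check that a step with $|F_i\setminus F_{i-1}|$ odd contributes a simple factor $(1+q)^{-1}$, while an even step contributes no pole. If the formula is not already stated in terms of rank-one increments, we would first refine chains so that $r(F_i)=i$.

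This gives the upper bound: every term has pole order at most the number of odd steps in a saturated initial chain, which is at most $\lambda(M)$. For the lower bound, we would take the leading coefficient $\lim_{q\to-1}(1+q)^{\lambda}\Mag(M;q)$. It is a sum over the maximal odd chains of length $\lambda$ of products of local factors.

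The main obstacle is showing that these leading terms do not cancel. Here orientability is needed: by Theorem~A, each local factor is a magnitude of a tope graph, or a tope count $(-1)^r\chi(-1)>0$ (Las Vergnas--Zaslavsky), evaluated in the regime where the earlier positivity statement applies (magnitude $>1$ on $(-1,1)$). We therefore plan to organize the leading coefficient as a product of signed, explicitly positive quantities: the residue factors $1/m_i$ coming from the odd steps, times the magnitude of the contraction $M/F_\lambda$ evaluated at $q=-1$. That contraction magnitude is finite because $M/F_\lambda$ has no further odd step, so it has no pole at $-1$. We then need the value at $-1$ to be positive. We would try to prove this by induction via the recursion of Theorem~A grouped by zero sets, showing that the regular value at $q=-1$ of a simple orientable matroid with $\lambda=0$ equals a positive count. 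Our candidate for that count is the number of topes of even parity, or more precisely a difference of tope counts that is forced to be positive.

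For the Varchenko--Gelfand statement, we would use the Varchenko--Gelfand theorem (Moseley, SY) that the degree filtration on functions on topes has associated graded given by the Orlik--Solomon-type algebra indexed by flats. The parity function is $\prod_e B_e$, that is, $\prod_e(2h_e-1)$ in Heaviside functions. Its degree is the smallest $d$ such that it lies in $F_d$. We would compute this by restricting to the parity of $|F\setminus F'|$ along chains: the product over the elements of one parallel-free rank step $F_i\setminus F_{i-1}$ reduces modulo lower degree to a degree-one element if the step is odd and to a constant if it is even. This shows the degree is at most $\lambda$, and for a maximal odd chain the top-degree symbol is a nonzero monomial, which gives equality.
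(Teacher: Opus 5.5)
Your argument has three gaps. The most serious is in the upper bound, because the expansion you start from is misstated, and once corrected it cannot be bounded term by term.

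\textbf{The upper bound.} Group the weights in $Z_M(-1,q)$ by their chain of distinct nonempty upper level sets $G_1\subsetneq\cdots\subsetneq G_s$. Summing over the multiplicity of each $G_i$ gives the factor $(-1)^{r(G_i)}q^{|G_i|}/\bigl(1-(-1)^{r(G_i)}q^{|G_i|}\bigr)$. So the denominators involve the flats themselves, not the increments, and this factor has a pole at $q=-1$ exactly when $|G_i|\equiv r(G_i)\pmod 2$. (Your own bookkeeping is also reversed: $1-T^m$ vanishes at $T=-1$ for even $m$, not odd $m$.)

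With the correct factors, a single term can have pole order exceeding $\lambda(M)$, even for representable $M$. Take $M=U_{2,3}\oplus U_{2,3}$ on $\{1,2,3\}\sqcup\{4,5,6\}$ and the chain $\{1\}\subsetneq\{1,4\}\subsetneq E$. The factors $1+q$, $1-q^2$, $1-q^6$ all vanish at $-1$. The coefficient is nonzero, since $M_w=U_{1,1}\oplus U_{1,1}\oplus U_{1,2}\oplus U_{1,2}$ has $\chi_{M_w}(-1)=16$. So this term has a triple pole, whereas $\Mag(M;q)=36/\bigl((1+q)^2(1+q+q^2)^2\bigr)$ has $d(M)=\lambda(M)=2$. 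Cancellation between terms is essential, and refining a chain to rank-one steps corresponds to no regrouping of the sum.

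The paper uses no expansion here. In the recursion \eqref{eq:signed-flat-recursion}, proper flats satisfy $\lambda(N|F)\le\lambda(N)$, and the left-hand factor has at most a simple zero at $t=1$. Induction therefore gives $d(N)\le\lambda(N)+1$. The value $\lambda(N)+1$ is then excluded by Lemma~\ref{lem:pole-parity}, which combines the functional equation with $\lambda(N)\equiv|E(N)|\pmod 2$.

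\textbf{Positivity of the leading coefficient.} The same cancellations undermine your plan: the leading coefficient is not a sum of independent contributions from maximal odd chains. For $U_{2,4}$, the four chains $\{e\}\subsetneq E$ with your factors $1/m_j$ give $4/3$, whereas $C_{U_{2,4}}=4$. In addition, $M/F_\lambda$ has only even parallel classes, so it is not simple and Theorem~\ref{thm:zaslavsky} does not apply to it. Also, the only simple matroid with $\lambda=0$ is the empty one, so the proposed induction has nothing to work with.

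The missing idea is that an inequality suffices. For $0<t<1$, the matrix $K=K_{G(\OM)}(-t)$ is positive definite, as a principal submatrix of the cube's similarity matrix. Lemma~\ref{lem:variational-magnitude} then gives $\one^{\mathsf T}K^{-1}\one\ge\one_W^{\mathsf T}K_W^{-1}\one_W$ for any set $W$ of topes. Take $W$ to be the $2^p$ topes $X_p\circ(\sigma_pX_{p-1})\circ\cdots\circ(\sigma_1X_0)$, built from covectors with $Z(X_j)=F_j$ along an odd chain of length $p=\lambda(M)$. Then $K_W$ has constant row sums $\prod_j(1-t^{m_j})$. Hence $A_M(t)\ge 2^p/\prod_j(1-t^{m_j})$, and so $C_M\ge 2^p/\prod_j m_j>0$.

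\textbf{The Varchenko--Gelfand degree.} The collapse of each block to a single sign holds only on the $2^r$ topes obtained from one tope by independent block reversals along a complete flag. It is not an identity in $V(\OM)$. For example, label the lines $x=0$, $y=0$, $x+y=0$ by $1,2,3$ and take the flag $\{1\}\subsetneq E$. The function $B\mapsto B_2B_3$ is constant on those four topes, but it has degree two on all six.

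Because restriction is filtered, this restriction argument does prove $\deg\eps\ge\lambda(M)$. That is the content of the paper's computation $\langle\eps,\gamma_{\mathscr F,T,p}\rangle\ne0$ for $p=\lambda(M)$. The inclusion $\eps\in P^{\R}_{\le\lambda(M)}$, however, concerns all topes and needs a global input. The paper uses duality: $Q_{k+1}=(P^{\R}_{\le k})^\perp$ is spanned by the elements $\gamma_{\mathscr F,T,k+1}$ over all complete flags and compatible topes \cite[Proposition~3.9]{SY}. Each of these pairs to zero with $\eps$ when $k=\lambda(M)$.
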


The orders of the poles of magnitude affect the asymptotic behavior of its Taylor coefficients. In Theorem~\ref{thm:eventual-alternation-counterexample}, we construct a representable simple matroid $M$ of rank $6$ such that $\Mag(M;q)$ has a pole of order $4$ at $q=-1$ and a pole of order $5$ at $q=i$.
The expansion coefficients of such a function satisfy $(-1)^\ell[q^\ell]\Mag(M;q)<0$ for infinitely many $\ell$.
This disproves Koizumi--Liu's eventual sign alternation conjecture for real hyperplane arrangements \cite[Conjecture~7.1(3)]{KL}.

The present paper is organized as follows.
Section~\ref{sec:preliminaries} fixes the conventions and recalls the
background needed for both the numerical and cohomological results.
Sections~\ref{sec:zaslavsky} and~\ref{sec:magnitude} prove Theorem~A and
develop matroid magnitude.
Section~\ref{sec:poles} proves Theorem~C and constructs a counterexample to the eventual-alternation conjecture.
Sections~\ref{sec:homology} and~\ref{sec:filtration} compute the
(co)homology and establish Theorem~B.
The numerical arguments through Section~\ref{sec:poles} are independent
of the later homology calculation.

\subsection*{Human--AI collaboration}
This work was developed through extensive interaction between the author and OpenAI's GPT-5.6 Sol and GPT-6 Astra. AI assistance included developing mathematical arguments, performing computations, and drafting and revising the manuscript. The author critically evaluated the AI-generated material, independently verified the mathematical arguments and references, and takes full responsibility for the paper.

\section{Preliminaries}\label{sec:preliminaries}

All matroids and graphs in this paper are finite.
We write $\N=\Z_{\geq0}$, and $\one_S\in\N^E$ for the indicator vector
of a subset $S\subseteq E$.
Matroids need not be realizable; an assertion about an oriented matroid
does not assume a representation by real vectors.
Our conventions for oriented matroids follow \cite{BLSWZ}.
We first recall the matroid invariants used in the numerical results,
then introduce tope graphs and the (co)homological constructions used
in the later sections.

\subsection{Matroids and characteristic polynomials}
A matroid abstracts the dependence relations of a family of vectors.
It consists of a ground set $E$ and a rank function $r_M:2^E\to\N$
such that, for all $S,T\subseteq E$,
\[
 0\leq r_M(S)\leq |S|,\qquad
 S\subseteq T\Longrightarrow r_M(S)\leq r_M(T),
\]
\[
 r_M(S\cup T)+r_M(S\cap T)\leq r_M(S)+r_M(T).
\]
The last two conditions are called monotonicity and submodularity.
For a vector configuration $(v_e)_{e\in E}$, the example to keep in mind
is $r_M(S)=\dim\operatorname{span}\{v_e:e\in S\}$.
We put $r(M)=r_M(E)$ and write $r(S)$ when $M$ is understood.
A subset is \emph{independent} if its rank equals its cardinality.
A \emph{basis} is a maximal independent subset; every basis has
cardinality $r(M)$, and $\mathcal B(M)$ denotes the set of bases.
A \emph{circuit} is a minimal dependent subset.
An element belonging to every basis is a \emph{coloop}.
The \emph{uniform matroid} $U_{r,n}$ on an $n$-element set, where
$0\leq r\leq n$, has rank function $r_{U_{r,n}}(S)=\min(r,|S|)$.

The closure of $S$ is
$\cl_M(S)=\{e\in E:r_M(S\cup\{e\})=r_M(S)\}$.
Thus closure adds the elements already spanned by $S$ in the vector
example.
A \emph{flat} is a subset $F$ with $\cl_M(F)=F$; equivalently, adding
any element outside $F$ increases its rank.
We write $\cF(M)$ for the lattice of flats, ordered by inclusion.
A \emph{loop} is an element $e$ with $r_M(\{e\})=0$.
Two distinct nonloops are parallel if together they have rank one;
a matroid is \emph{simple} if it has neither loops nor parallel elements.
Its \emph{simplification} is obtained by deleting all loops and retaining
one element from each parallel class.

For $F\subseteq E$, the \emph{restriction} $M|F$ retains only the
elements of $F$, while the \emph{contraction} $M/F$ measures rank after
the elements of $F$ have already been spanned.
Their ground sets are $F$ and $E\setminus F$, respectively, and
\[
 \begin{aligned}
 r_{M|F}(S)&=r_M(S) &&(S\subseteq F),\\
 r_{M/F}(S)&=r_M(S\cup F)-r_M(F) &&(S\subseteq E\setminus F).
 \end{aligned}
\]
For a vector configuration, contraction corresponds to taking the
images of the remaining vectors modulo the span of the vectors in $F$.
Matroids obtained by restriction and contraction are called \emph{minors}.
Contraction by a flat of a loopless matroid is loopless but need not be
simple.
For matroids $M_1,M_2$ on disjoint ground sets $E_1,E_2$, their
\emph{direct sum} is the matroid on $E_1\sqcup E_2$ with rank
$r_{M_1\oplus M_2}(S)=r_{M_1}(S\cap E_1)+r_{M_2}(S\cap E_2)$.
A matroid is \emph{connected} if it has no direct sum decomposition
with both ground sets nonempty.

Our characteristic polynomial convention is
\begin{equation}\label{eq:characteristic}
 \chi_M(x)=\sum_{S\subseteq E}(-1)^{|S|}x^{r(M)-r_M(S)}.
\end{equation}
It is zero if $M$ has a loop.
For a loopless matroid,
\[
 \chi_M(x)=\sum_{F\in\cF(M)}\mu(\varnothing,F)x^{r(M)-r(F)},
\]
where $\mu$ is the M\"obius function of the flat lattice.
It is determined recursively by $\mu(F,F)=1$ and
$\sum_{F\subseteq H\subseteq G}\mu(F,H)=0$ for flats $F\subsetneq G$,
where the sum runs over flats $H$.
The two expressions for $\chi_M$ agree by grouping subsets according
to their closure.
We write
\begin{equation}\label{eq:region-number}
 R(M)=(-1)^{r(M)}\chi_M(-1).
\end{equation}
For loopless $M$, this is a positive integer.
The empty matroid has rank zero, characteristic polynomial $1$, and
$R(\varnothing)=1$.

\subsection{Weights, initial matroids, and the motivic zeta function}
For $w\in\N^E$, let
\[
 |w|=\sum_{e\in E}w_e,\qquad
 \wt_M(w)=\max_{B\in\mathcal B(M)}\sum_{e\in B}w_e.
\]
The maximizing bases are the bases of a matroid $M_w$ on $E$, called
the \emph{initial matroid}.
In particular $r(M_w)=r(M)$.
Throughout, initial matroids use \emph{maximal} weights.
An element is a loop of $M_w$ precisely when it belongs to no
maximizing basis, so $M_w$ can have loops even if $M$ does not.

For nonnegative integer weights the \emph{upper level sets}
\begin{equation}\label{eq:upper-level-sets}
 U_j(w)=\{e\in E:w_e\geq j\},\qquad j\geq1,
\end{equation}
form a nested sequence $U_1(w)\supseteq U_2(w)\supseteq\cdots$,
eventually empty, and $w=\sum_{j\geq1}\one_{U_j(w)}$.
Each element $e$ is counted in exactly $w_e$ of these sets.
The greedy algorithm gives
\begin{equation}\label{eq:kappa}
 \kappa(w):=\wt_M(w)=\sum_{j\geq1}r_M(U_j(w)).
\end{equation}
Indeed, the weight of a basis $B$ is
$\sum_j|B\cap U_j(w)|\leq\sum_j r_M(U_j(w))$.
If $w=0$, every basis attains equality.
Otherwise, starting with a basis of the smallest nonempty upper level set and
successively extending it to bases of the larger sets, and finally
of $M$, attains every bound simultaneously.
Consequently, $B$ is maximizing if and only if
$|B\cap U_j(w)|=r_M(U_j(w))$ for every $j$.

Suppose $M$ is loopless.
Then $M_w$ is loopless if and only if every nonempty $U_j(w)$ is a flat.
To see the forward implication, if $e\in\cl_M(U_j)\setminus U_j$,
every maximizing basis contains a basis of $U_j$ and hence omits $e$:
including $e$ would make it dependent.
Thus $e$ would be a loop of $M_w$.
For the converse, suppose all nonempty upper level sets are flats.
List the distinct nonempty upper level sets in increasing order,
append $E$ if necessary, and prepend $\varnothing$:
\[
 \varnothing=F_0\subsetneq F_1\subsetneq\cdots\subsetneq F_s=E.
\]
The greedy description of the bases gives
\begin{equation}\label{eq:initial-minors}
 M_w=\bigoplus_{i=1}^s (M|F_i)/F_{i-1}.
\end{equation}
Here the $i$th summand has ground set $F_i\setminus F_{i-1}$:
a maximizing basis is built by extending a basis of $F_{i-1}$ to one
of $F_i$ at each step.
Every summand is loopless because $F_{i-1}$ is a flat in $M|F_i$.
For any matroid $M$, we set
\begin{equation}\label{eq:admissible-weights}
 \mathcal W(M)=\{w\in\N^E:M_w\text{ is loopless}\}.
\end{equation}
The empty chain convention covers $E=\varnothing$.
For nonempty loopless $M$, the weight $0$ corresponds to the chain
$\varnothing\subsetneq E$ and $M_0=M$.
For example, if $\varnothing\subsetneq F\subsetneq E$ is a flat and
$w=2\one_F+\one_E$, then $U_1(w)=E$ and $U_2(w)=U_3(w)=F$.
In this case $M_w=M|F\oplus M/F$ and
$\kappa(w)=r(M)+2r(F)$.

\begin{example}\label{ex:initial-u23}
Let $M=U_{2,3}$ on $E=\{1,2,3\}$.
For $w=(2,0,0)$, the maximizing bases are $\{1,2\}$ and $\{1,3\}$,
so $M_w=U_{1,1}\oplus U_{1,2}$, with summands on $\{1\}$ and
$\{2,3\}$.
This initial matroid is loopless but has the parallel pair $2,3$;
the nonempty upper level sets are $U_1(w)=U_2(w)=\{1\}$, a flat.
For $v=(2,1,0)$, only $\{1,2\}$ maximizes the basis weight, so $3$ is
a loop of $M_v$.
Here $U_1(v)=\{1,2\}$ is not a flat, since its closure is $E$.
Thus $w\in\mathcal W(M)$, whereas $v\notin\mathcal W(M)$.
\end{example}

Following Jensen--Kutler--Usatine \cite[Definition~1.1]{JKU}, define
the \emph{motivic zeta function} of a matroid $M$ by
\begin{equation}\label{eq:zeta-definition}
 Z_M(x,T)=\sum_{w\in\N^E}
 \chi_{M_w}(x)x^{-r(M)-\wt_M(w)}T^{|w|}
 \in\Z[x^{\pm1}][[T]].
\end{equation}
The letter $x$ replaces the variable called $q$ in \cite{JKU}; we reserve
$q$ for magnitude.
This is a formal power series in $T$ with Laurent polynomial
coefficients in $x$.
There are only finitely many weights of any fixed total weight, so
each coefficient is a finite sum; in particular, substituting $x=-1$
is well-defined coefficientwise.
Weights outside $\mathcal W(M)$ contribute zero, and
\begin{equation}\label{eq:zeta-constant}
 Z_M(x,0)=x^{-r(M)}\chi_M(x),
\end{equation}
because $w=0$ is the only vector with $|w|=0$.
A \emph{flag of flats} is a chain of flats under inclusion.
For a weight contributing to $Z_M$, its upper level sets give such a
flag, possibly with repetitions, and their sum of indicator vectors
recovers $w$.
The distinct flats determine $M_w$ through
\eqref{eq:initial-minors}, while their repetitions determine the
exponents $|w|$ and $\wt_M(w)$.

\subsection{Covectors and tope graphs}
For a real central hyperplane arrangement $\{H_e\}_{e\in E}$,
choose a linear form $\alpha_e$ with $H_e=\ker\alpha_e$.
A point $v$ determines the sign vector
$(\operatorname{sign}\alpha_e(v))_{e\in E}$.
Each nonempty set of points with a fixed sign vector is a face of the
arrangement:
the zero entries record the hyperplanes containing the face, and the
nonzero entries record which side of the other hyperplanes it lies on.
Covectors abstract these sign vectors and their incidence relations.

For sign vectors $X,Y\in\{+,0,-\}^E$, their composition and separation
set are
\[
 (X\circ Y)_e=\begin{cases}X_e&X_e\ne0,\\Y_e&X_e=0,\end{cases}
 \qquad S(X,Y)=\{e:X_e=-Y_e\ne0\}.
\]
Thus composition fills the zero entries of $X$ with entries of $Y$,
whereas $S(X,Y)$ records coordinates with opposite nonzero signs.
An \emph{oriented matroid} $\OM=(E,\cL)$ consists of a set of
\emph{covectors} $\cL\subseteq\{+,0,-\}^E$ satisfying:
\begin{enumerate}
\item $0\in\cL$;
\item if $X\in\cL$, then $-X\in\cL$;
\item if $X,Y\in\cL$, then $X\circ Y\in\cL$;
\item if $X,Y\in\cL$ and $e\in S(X,Y)$, there is $Z\in\cL$ with
      $Z_e=0$ and $Z_f=(X\circ Y)_f$ for every $f\notin S(X,Y)$.
\end{enumerate}
The last axiom is called \emph{strong elimination}.
For an arrangement, it describes passing through a separating
hyperplane while retaining the signs outside the separation set.
The zero sets $Z(X)=\{e:X_e=0\}$ are precisely the flats of its
underlying matroid $M$.
In the arrangement example, $M$ is the matroid of the linear forms
$(\alpha_e)_{e\in E}$.
We also write $r(\OM)=r(M)$.
We call $M$ \emph{orientable} if it underlies an oriented matroid.
The covectors of the restriction $\OM|F$ are $X|_F$ for $X\in\cL$.
Those of the contraction $\OM/F$ are $X|_{E\setminus F}$ for
$X\in\cL$ vanishing on $F$.
Their underlying matroids are $M|F$ and $M/F$, respectively.

The covector order is $X\leq Y$ if $X_e\in\{0,Y_e\}$ for every $e$.
In the arrangement example, this means that the face indexed by $X$
lies in the closure of the face indexed by $Y$.
Its maximal elements are the \emph{topes}, and their set is $\cT(\OM)$.
If $M$ is loopless, the topes are exactly the covectors with no zero
coordinates; for an arrangement, these are its \emph{chambers}, the
connected components of the complement of the hyperplanes.
The \emph{tope graph} $G(\OM)$ joins two topes when they cover a common
covector.
Here $B$ covers $X$ if $X<B$ and no covector lies strictly between
them; geometrically, the two chambers share a face of codimension one.
When $M$ is simple, adjacency means differing in one coordinate, and
\begin{equation}\label{eq:tope-distance}
 d(B,C)=|S(B,C)|.
\end{equation}
Here $d$ is the shortest-path distance in the graph.
Equivalently, there is a path from $B$ to $C$ that changes each
coordinate in $S(B,C)$ exactly once and changes no other coordinate.
Thus the inclusion of the tope graph in the cube $\{+,-\}^E$ preserves
distances; such a subgraph is called \emph{isometric}.
Simplicity is needed here: parallel coordinates change together and
are counted only once by the unweighted graph distance.

For a covector $X$, its localization is the restriction
$\OM_X=\OM|Z(X)$.
The topes above $X$ have their signs fixed by $X$ outside $Z(X)$;
restriction to $Z(X)$ identifies them with $\cT(\OM_X)$.
Conversely, given a local tope $U$, choose $Y\in\cL$ with
$Y|_{Z(X)}=U$; then $X\circ Y$ is the corresponding tope above $X$,
independently of the choice of $Y$.
For an arrangement, this keeps just the hyperplanes containing the
face $X$ and describes the chambers incident to that face.
This accounts for the geometric term ``localization'' despite its
being a restriction of the underlying matroid.

For example, take the three lines defined by the forms
$\alpha_1(x,y)=y$, $\alpha_2(x,y)=x$, and $\alpha_3(x,y)=x-y$.
The ray $\{(x,0):x>0\}$ is the face $X=(0,+,+)$, with $Z(X)=\{1\}$.
The two topes of $\OM_X=\OM|\{1\}$ record the two chambers above this
fixed ray, whose signs are $(+,+,+)$ and $(-,+,+)$.
In contrast, $\OM/\{1\}$ describes the arrangement restricted to the
line $y=0$; its two topes $(+,+)$ and $(-,-)$ record the two rays on
that line, namely the faces $X$ and $-X$ with zero set $\{1\}$.
Thus localization counts chambers incident to a fixed face, while
contraction counts faces with a fixed zero set.

For loopless $M$, the Las Vergnas--Zaslavsky formula is
\begin{equation}\label{eq:classical-zaslavsky}
 |\cT(\OM)|=R(M).
\end{equation}
We use the oriented matroid form as in \cite[\S4.6]{BLSWZ}; the
arrangement theorem originates in \cite{Zaslavsky}.

\subsection{Magnitude and its homological refinement}
For a finite connected graph $G$, with shortest-path distance $d$,
let $q$ be an indeterminate and put
\[
 K_G(q)=(q^{d(B,C)})_{B,C\in V(G)}.
\]
This is the \emph{similarity matrix} of $G$.
Since $K_G(0)=I$, its determinant has constant term $1$, so the matrix
is invertible over $\Q(q)$.
Its \emph{weighting} is the column vector
$w_G(q)=K_G(q)^{-1}\one$, where $\one$ is the all-ones column vector.
Equivalently, it is the unique solution of
\[
 \sum_{C\in V(G)}q^{d(B,C)}w_G(q)_C=1
 \qquad(B\in V(G)).
\]
Here a weighting is distinct from a weight vector used to define an
initial matroid.
Leinster's magnitude \cite{Leinster} is
\begin{equation}\label{eq:graph-magnitude}
 \Mag(G;q)=\one^{\mathsf T}K_G(q)^{-1}\one
          =\sum_{B\in V(G)}w_G(q)_B.
\end{equation}
It is a rational function regular at $0$, with constant term $|V(G)|$.

We recall the magnitude homology of Hepworth--Willerton \cite{HW} and the cup
product of Hepworth \cite{Hepworth}.
The group $\MC_{k,\ell}(G;\Z)$ is freely generated by the tuples
$(B_0,\ldots,B_k)$ of vertices of $G$ with consecutive terms distinct and
$\sum_i d(B_{i-1},B_i)=\ell$.
The vertices in a tuple need not be adjacent, and nonconsecutive
vertices may coincide.
The index $k$ is the homological degree, while $\ell$ records the total
length of the tuple.
The differential $\partial\colon \MC_{k,\ell}(G;\Z)\to \MC_{k-1,\ell}(G;\Z)$ is given by
\begin{equation}\label{eq:magnitude-differential}
 \partial(B_0,\ldots,B_k)=
 \sum_{i=1}^{k-1}(-1)^i\partial_i(B_0,\ldots,B_k),
\end{equation}
where $\partial_i$ deletes $B_i$ when
$d(B_{i-1},B_{i+1})=d(B_{i-1},B_i)+d(B_i,B_{i+1})$ and is zero otherwise.
Thus an interior vertex can be deleted precisely when it lies on a
shortest path between its neighbors, so that deletion preserves length.
For example, if $A,B,C$ occur in this order on a shortest path, then
$\partial(A,B,C)=-(A,C)$; by contrast, $\partial(A,B,A)=0$.
Deleting an endpoint always decreases length, so there are no endpoint
terms.
Write $\MH_{k,\ell}$ for its homology and $\MH^{k,\ell}(G;\Bbbk)$
for the cohomology of the dual complex with coefficients in a field
$\Bbbk$.
Explicitly, a cochain in bidegree $(k,\ell)$ is a $\Bbbk$-valued
function on the generating tuples of $\MC_{k,\ell}$, and its
coboundary is given by precomposition with $\partial$.
The cup product is induced by
\begin{equation}\label{eq:cup-definition}
 (f\smile g)(B_0,\ldots,B_{a+b})
 =f(B_0,\ldots,B_a)g(B_a,\ldots,B_{a+b})
\end{equation}
for cochains of degrees $a,b$, extending each cochain by zero outside
its length component.
The formula splits a tuple at the shared vertex $B_a$.
If the cochains have lengths $\ell,m$, their product has bidegree
$(a+b,\ell+m)$; this product is generally noncommutative.
At length zero only single-vertex tuples occur, so
$\MH^{0,0}(G;\Bbbk)=\Fun(V(G),\Bbbk)$ with pointwise multiplication.
Hepworth--Willerton proved the following Euler characteristic identity:
\begin{equation}\label{eq:magnitude-euler}
 \Mag(G;q)=\sum_{k,\ell\geq0}(-1)^k
 \dim_{\Bbbk}\MH^{k,\ell}(G;\Bbbk)q^\ell.
\end{equation}
This is an equality of formal power series at $q=0$.
The magnitude homology vanishes for $k>\ell$, so
the coefficient of each $q^\ell$ is a finite sum.
The identity expresses magnitude as the Euler characteristic in each
length, which is the sense in which magnitude (co)homology refines it.

Now let $G=G(\OM)$ be the tope graph of a simple oriented matroid.
Equation~\eqref{eq:tope-distance} refines length to the
\emph{crossing vector}
\begin{equation}\label{eq:crossing-vector}
 w(B_0,\ldots,B_k)=\sum_{i=1}^k\one_{S(B_{i-1},B_i)}\in\N^E.
\end{equation}
The coordinate $w_e$ counts how many successive pairs change sign at
$e$, and $|w|=\ell$.
The differential $\partial$ preserves the crossing vector $w$ and both endpoints.
For fixed $w\in\N^E$ and topes $A,B$, let
$\MC_{k,w}(G)_{A\to B}$ be the subgroup of
$\MC_{k,|w|}(G;\Z)$ freely generated by the tuples $(B_0,\ldots,B_k)$
satisfying
\[
 B_0=A,\qquad B_k=B,\qquad w(B_0,\ldots,B_k)=w.
\]
The differential maps this
subgroup into $\MC_{k-1,w}(G)_{A\to B}$.
These groups therefore form a subcomplex, denoted
$\MC_{*,w}(G)_{A\to B}$, where $*$ ranges over homological degrees
while $w,A,B$ remain fixed.
A bullet in place of an endpoint means taking the direct sum over
that endpoint; for instance,
$\MC_{*,w}(G)_{\bullet\to B}=\bigoplus_A\MC_{*,w}(G)_{A\to B}$.
Omitting both endpoints means taking the direct sum over both $A$ and $B$.
The notations $\MH_{k,w}$ and $\MH^{k,w}$ refer to this finer grading.
If $\eps_w$ reverses the coordinates on which $w_e$ is odd, then the
initial tope of a chain of crossing vector $w$ and terminal tope $B$
must be $\eps_w B$, since a coordinate has different initial and
terminal signs exactly when it changes an odd number of times.
For a subset $F$, write $\eps_F B$ for sign reversal on $F$, and
$B\setminus F$ for the sign vector obtained by setting those coordinates
to zero.
These operations are defined on sign vectors; they need not produce
a tope or a covector of the given oriented matroid.

The notation linking matroid weights to chains is collected in
Table~\ref{tab:crossing-notation}.
The support of a weight is $\supp(w)=\{e\in E:w_e>0\}$.
\begin{table}[htbp]
\caption{Weights and crossing vectors.}
\label{tab:crossing-notation}
\centering
\begin{tabular}{@{}lp{0.65\textwidth}@{}}
\toprule
Notation & Meaning \\
\midrule
$w\in\N^E$ & A matroid weight; for a chain, its coordinate crossing counts. \\
$|w|=\sum_e w_e$ & The length $\ell$ of a chain with crossing vector $w$. \\
$\kappa(w)=\wt_M(w)$ & The maximum basis weight; the homological degree
of the nonzero components in Theorem~\ref{thm:homology}. \\
$\eps_w$ & Sign reversal in the coordinates where $w_e$ is odd;
a chain ending at $B$ starts at $\eps_w B$. \\
$\mathcal W(M)$ & Weights whose initial matroid $M_w$ is loopless. \\
\bottomrule
\end{tabular}
\end{table}

\subsection{The Varchenko--Gelfand filtration}\label{subsec:vg-background}
Let $\mathcal N$ be a loopless oriented matroid with underlying matroid
$N$ of rank $d$, allowing parallel elements.
This generality is needed because the initial matroids above can have
parallel elements even when the original matroid is simple.
Consider the algebra
\[
 V(\mathcal N)=\Fun(\cT(\mathcal N),\Ftwo)
\]
of functions on topes, with pointwise addition and multiplication.
For each $e$, the \emph{Heaviside function} $h_e$ is the indicator of
the positive side: $h_e(B)=1$ if $B_e=+$ and $h_e(B)=0$ if $B_e=-$.
The \emph{Varchenko--Gelfand filtration} is the increasing degree
filtration
\begin{equation}\label{eq:vg-filtration}
 P_pV(\mathcal N)=\operatorname{span}_{\Ftwo}
 \left\{\prod_{e\in S}h_e:|S|\leq p\right\},\qquad P_{-1}=0.
\end{equation}
Repeated factors are unnecessary because $h_e^2=h_e$.
Thus $P_pV(\mathcal N)$ consists of the functions that can be
represented by polynomials of degree at most $p$ in the $h_e$.
For example, $P_0$ consists of constant functions, and $P_1$ is spanned
by $1$ and the individual $h_e$.
Products satisfy $P_pV\cdot P_qV\subseteq P_{p+q}V$.
The \emph{associated graded algebra}
\[
 \gr_P V=\bigoplus_{p\geq0}\gr_p^P V,\qquad
 \gr_p^P V=P_pV/P_{p-1}V,
\]
records the degree-$p$ functions modulo those expressible in lower
degree.
This definition also makes sense over $\Z$ or $\R$.
We specify real coefficients when using it for pole orders.

The \emph{Orlik--Solomon algebra} $\OS^\bullet(N;\Ftwo)$ \cite{OS} is the
quotient of the exterior algebra on degree-one generators $a_e$ by the
ideal generated by the circuit boundaries
\[
 \partial(a_{e_1}\cdots a_{e_m})
 =\sum_{i=1}^m a_{e_1}\cdots\widehat{a_{e_i}}\cdots a_{e_m},
 \qquad \{e_1,\ldots,e_m\}\text{ a circuit}.
\]
The hat means that the indicated factor is omitted, and each displayed
boundary is set equal to zero in the quotient.
The generators commute in characteristic two but still satisfy $a_e^2=0$.
For example, a three-element circuit $\{e,f,g\}$ imposes the relation
$a_fa_g+a_ea_g+a_ea_f=0$.
The oriented matroid form of the Varchenko--Gelfand theorem gives
\begin{equation}\label{eq:vg-os}
 \gr_P V(\mathcal N)\cong\OS^\bullet(N;\Ftwo),\qquad
 [h_e]\longleftrightarrow a_e.
\end{equation}
Here is a direct argument over $\Ftwo$.
The orientation assigns to each circuit $C$ a partition
$C=C^+\sqcup C^-$, defined up to interchanging the two parts; this is
a \emph{signed circuit}.
For real vectors, the two parts record the signs of the coefficients
in a minimal linear dependence.
Circuit--covector orthogonality says that a tope can neither agree
everywhere nor disagree everywhere with these signs on $C$.
Thus both polynomials
\[
 \prod_{e\in C^+}h_e\prod_{e\in C^-}(1+h_e),\qquad
 \prod_{e\in C^+}(1+h_e)\prod_{e\in C^-}h_e
\]
vanish on the topes: the first detects agreement with all the circuit
signs, and the second detects disagreement with all of them.
In their sum the degree-$|C|$ terms cancel, leaving degree at most
$|C|-1$, with homogeneous part in that degree
$\sum_{e\in C}\prod_{f\in C\setminus\{e\}}h_f$.
Passing to the associated graded therefore gives the circuit boundary
relation for the classes $[h_e]$.
Likewise, $h_e^2=h_e\in P_1V$ implies $[h_e]^2=0$ in $P_2V/P_1V$.
Since the classes $[h_e]$ generate $\gr_P V$, the assignment
$a_e\mapsto[h_e]$ induces a surjection
$\OS^\bullet(N;\Ftwo)\twoheadrightarrow\gr_P V$.

To show that there are no further relations, compare dimensions.
Fix an ordering of the ground set; a \emph{broken circuit} is a circuit
with its least element removed.
The no-broken-circuit basis theorem gives a basis of $\OS^p$ indexed
by the $p$-element subsets containing no broken circuit.
Their number is $(-1)^p$ times the coefficient of $x^{d-p}$ in
$\chi_N(x)$; see \cite[Theorem~5.3]{SY}.
In particular, the total dimension of $\OS^\bullet(N;\Ftwo)$ is $R(N)$.
On the other hand, the indicator of a single tope $B$ is
\[
 \delta_B=\prod_{e:B_e=+}h_e\prod_{e:B_e=-}(1+h_e).
\]
These indicators form a basis of $V(\mathcal N)$, so the filtration is
exhaustive and $\gr_P V$ also has total dimension
$\dim V(\mathcal N)=|\cT(\mathcal N)|=R(N)$, by
\eqref{eq:classical-zaslavsky}.
The surjection is therefore an isomorphism.
Consequently, if
$b_p(N)=\dim_{\Ftwo}\gr_p^P V(\mathcal N)$, then
\begin{equation}\label{eq:vg-characteristic}
 \chi_N(x)=\sum_{p=0}^d(-1)^p b_p(N)x^{d-p},\qquad
 P_dV(\mathcal N)=V(\mathcal N).
\end{equation}
Thus polynomial degree on tope functions recovers the characteristic
polynomial, although the tope set itself depends on the orientation.
The same formula for the graded dimensions holds over $\R$, and the
integral graded pieces are free of the corresponding ranks; see
\cite[Theorem~5.9]{Moseley} and
\cite[Proposition~6.4 and Corollary~6.5]{SY}, respectively.
The original construction for arrangements is due to
Varchenko--Gelfand \cite{VG}.
Section~\ref{sec:poles} uses polynomial degree and the graded-dimension
formula over $\R$ to determine pole orders.
Section~\ref{sec:filtration} uses the algebra isomorphism
\eqref{eq:vg-os} over $\Ftwo$ to describe the associated graded of
magnitude cohomology.

\section{The magnitude-refined Las Vergnas--Zaslavsky theorem}\label{sec:zaslavsky}

The Las Vergnas--Zaslavsky theorem identifies the number of topes of a
loopless oriented matroid with a specialization of the characteristic
polynomial of its underlying matroid.  By taking the distances between
topes into account, we refine this theorem to a formula for magnitude.
The corresponding refinement on the matroid side is the motivic zeta
function.

\begin{theorem}[Magnitude-refined Las Vergnas--Zaslavsky theorem]\label{thm:zaslavsky}
Let $\OM$ be a simple oriented matroid with underlying matroid $M$.
Then
\begin{equation}\label{eq:zaslavsky}
  \Mag(G(\OM);q)=Z_M(-1,q).
\end{equation}
The equality holds both in $\mathbb Z[[q]]$ and as an equality of
rational functions.
\end{theorem}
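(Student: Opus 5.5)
The plan follows the route sketched in the introduction: derive a recursion for the weighting of the tope graph, derive the same recursion for $Z_M(-1,q)$, and compare the two by induction on $|E|$.

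\emph{Weighting side.} I would first establish a face decomposition for the weighting. For every tope $B$ the claim is
\[
 w_G(q)_B=\sum_{X\le B}(-1)^{r(M)-r(Z(X))}\,c_{\OM_X}(q),
\]
with $X$ ranging over covectors below $B$ and $c$ a local correction depending only on the localization, say $c_{\OM_X}(q)=\prod_{e\in Z(X)}(\cdots)$. A natural guess is that $c_{\OM_X}(q)$ is $(1-q)^{|Z(X)|}$ divided by a local factor. The cleaner approach is to avoid guessing $c$ and instead verify Koizumi--Liu's recursion in the oriented matroid setting. To do this, I would test a candidate weighting against $\sum_C q^{d(B,C)}w_C=1$, using $d(B,C)=|S(B,C)|$ from \eqref{eq:tope-distance}. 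The key combinatorial input is a generating-function form of Las Vergnas--Zaslavsky, which I would prove by the standard deletion/contraction on covectors: for a fixed tope $B$ and any covector $X$,
\[
 \sum_{C\ge X}q^{|S(B,C)|}
\]
is expressible through localizations. In other words, the topes above $X$ form $\cT(\OM_X)$, and their distance to $B$ splits into a constant part outside $Z(X)$ plus a local distance. Summing over all topes then gives
\[
 \Mag(G;q)=\sum_{X\in\cL}(\text{sign})\,q^{(\ldots)}\,\Phi(\OM_X;q),
\]
which expresses $\Mag$ through the magnitudes of the localizations $\OM|F$ with $F$ a flat. Grouping covectors by zero set $F$, the number of covectors with $Z(X)=F$ is $|\cT(\OM/F)|=R(M/F)$. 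So the recursion I am aiming for is
\[
 \Mag(\OM)=\sum_{F\in\cF(M)} R(M/F)\,\Psi(M|F;q),
\]
where $\Psi$ is determined by $\Mag(\OM|F)$ together with lower terms. The important feature is that only $M$ enters, never the orientation.

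\emph{Zeta side.} On the $Z_M$ side I would split the sum \eqref{eq:zeta-definition} over $w\in\mathcal W(M)$ according to the largest proper upper level set. Using \eqref{eq:initial-minors}, I would write $w=\one_E+w'$ or $w=w''$ with $\min w=0$. Weights with $\min w\ge1$ contribute $q^{|E|}(-1)^{-r}\cdot$ (the whole sum again), which gives a factor $1/(1-(-1)^{r}q^{|E|})$. This is the source of the poles at roots of unity other than $1$. Weights with $\min w=0$ have $U_1(w)=F\subsetneq E$, a flat. Since $\chi_{M_w}=\chi_{M/F}\cdot\chi_{(M|F)_{w|_F}}$, at $x=-1$ the contribution factors as $R(M/F)$ times a zeta-type sum of $M|F$ with all weights at least $1$. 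The latter is $q^{|F|}(\pm1)\,Z_{M|F}(-1,q)$. This produces a recursion
\[
 Z_M(-1,q)\bigl(1-(-1)^{r}q^{|E|}\bigr)=\sum_{F\subsetneq E}R(M/F)(-1)^{r(F)}q^{|F|}Z_{M|F}(-1,q),
\]
with the signs to be tracked carefully. Here the $x$-powers satisfy $x^{-r-\wt}$ with $\wt=\sum_j r(U_j)$, which gives exact sign bookkeeping at $x=-1$.

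\emph{Comparison and main obstacle.} I would then show that the weighting side yields the identical recursion. The restrictions $\OM|F$ are simple, so induction on $|E|$ applies to them. The main obstacle is the first step: proving the face decomposition for arbitrary oriented matroids without geometry. Concretely, one must verify
\[
 \sum_C q^{|S(B,C)|}w_C=1
\]
for the proposed $w$. I would handle this by an Euler-characteristic / Möbius argument on the face poset $\{X\le B\}$ and the tope intervals $[B,C]$. For that I would use that the set of topes in an interval between $B$ and $C$, together with covectors, forms a shellable ball or sphere, so that alternating sums $\sum_{X}(-1)^{\operatorname{rk}}$ over the relevant subcomplexes equal $0$ or $\pm1$. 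Identities of this kind are available from \cite{BLSWZ}. Once that is in place, the equality in $\Z[[q]]$ follows by induction, and the equality as rational functions follows because both sides satisfy the same rational recursion.
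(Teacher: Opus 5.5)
Your zeta side is correct and is the paper's own recursion (Lemma~\ref{lem:zeta-flat-recursion}). Splitting by $\supp(w)$, writing $w|_F=\one_F+v$, and isolating $F=E$ gives $(1-(-1)^rq^{|E|})Z_M(-1,q)=\sum_{F\subsetneq E}(-1)^{r(F)}R(M/F)q^{|F|}Z_{M|F}(-1,q)$. Grouping covectors by zero set with $|\{X:Z(X)=F\}|=|\cT(\OM/F)|=R(M/F)$ is also what the paper does.

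The gap is on the weighting side. This is the real content of the theorem, and you leave it as ``the main obstacle'' with no precise statement. To match the zeta recursion you need exactly
\[
 \Mag(G(\OM);q)=\sum_{X\in\cL}(-1)^{r(Z(X))}q^{|Z(X)|}\Mag(G(\OM_X);q).
\]
It is realized by the weighting $\widetilde w(C)=\sum_{X\le C}(-1)^{r(Z(X))}q^{|Z(X)|}w_X(C)$, where $w_X$ is the magnitude weighting of the local tope graph $G(\OM_X)$ evaluated at $C|_{Z(X)}$. This is a recursion, not a closed formula. For $X=0$ the local weighting is the unknown global weighting itself, and in general $w_X(C)$ depends on the tope. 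So your ansatz with a product-type factor $c_{\OM_X}(q)$ depending only on the localization, and with sign $(-1)^{r(M)-r(Z(X))}$, does not have the right shape.

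More importantly, the identity that makes $\widetilde w$ a weighting is not the one you point to. The gate property of $\cT_X$ (gate $X\circ D$) and the local weighting equation give $\sum_{C\ge X}q^{d(D,C)}w_X(C)=q^{d(D,X\circ D)}$. Moreover $|Z(X)|+d(D,X\circ D)=|N_D(X)|$ with $N_D(X)=\{e:X_e\ne D_e\}$. So the weighting equation at a tope $D$ is equivalent to
\[
 \sum_{X\in\cL}(-1)^{r(Z(X))}q^{|N_D(X)|}=1 .
\]
This is a sum over all covectors, not over the face poset below $D$. It also does not involve tope intervals $[D,C]$; the interval theorem \cite[Theorem~4.4.2]{BLSWZ} enters the paper only in the homology computation.

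The missing idea is how to prove this identity. Expand $q^{|N|}=\sum_{A\subseteq N}(q-1)^{|A|}$. The coefficient of $(q-1)^{|A|}$ is $(-1)^r+(-1)^{r-1}\chi(K_A)$, where $K_A$ is the union of the closed cells of the topes $C$ with $A\cap S(-D,C)=\varnothing$. These topes form an order ideal of the tope poset based at $-D$, hence an initial segment of a shelling of the sphere \cite[Theorem~4.3.3]{BLSWZ}. So $K_A$ is the whole sphere for $A=\varnothing$ and a ball for $A\ne\varnothing$ \cite[Proposition~4.7.26]{BLSWZ}, which yields the coefficients $1$ and $0$.

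Your unspecified ``generating-function form of Las Vergnas--Zaslavsky via deletion/contraction'' does not supply this. At $q=1$ the identity reduces to the Euler relation $\sum_X(-1)^{r(Z(X))}=1$, which the ordinary theorem already gives. The $q$-dependence is exactly what the ball structure provides. As written, your induction has no verified face decomposition to run on.
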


At $q=0$, the similarity matrix of a graph is the identity, whereas only
the zero weight contributes to the motivic zeta function.  Thus the
constant term of \eqref{eq:zaslavsky} is the Las Vergnas--Zaslavsky
formula \cite[\S4.6]{BLSWZ}:
\[
  |\cT(\OM)|=(-1)^{r(M)}\chi_M(-1).
\]
Our proof uses the Las Vergnas--Zaslavsky theorem for contractions of
$\OM$.  A key ingredient is a face decomposition extending
Koizumi--Liu's formula \cite[Theorem~3.7]{KL}.
Grouping covectors by their zero sets gives the same recursion as the
motivic zeta function.

\subsection{The Euler characteristic of a collection of faces}

Fix a simple oriented matroid $\OM=(E,\cL)$ of rank $r$.  For a tope
$D$ and a covector $F$, put
\[
  N_D(F)=\{e\in E:F_e\ne D_e\}.
\]
In particular, every zero coordinate of $F$ belongs to $N_D(F)$.
Write $\OM_F=\OM|_{Z(F)}$ for the localization at $F$.
We will construct a weighting of the whole tope graph by taking a
signed sum of weightings on the topes incident with each face.
At a tope $D$, the weighting equation reduces to the identity in the following lemma.

\begin{lemma}\label{lem:face-euler}
For every $D\in\cT(\OM)$,
\begin{equation}\label{eq:face-euler}
  \sum_{F\in\cL}(-1)^{r(\OM_F)}q^{|N_D(F)|}=1.
\end{equation}
\end{lemma}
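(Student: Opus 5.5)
The plan is to fix $D\in\cT(\OM)$ and read \eqref{eq:face-euler} coefficientwise in $q$. First I would rewrite the sign: for every covector $F$, $r(\OM_F)=r_M(Z(F))$. I then group the covectors by the set $S=N_D(F)$. The covector $F=D$ is the only one with $N_D(F)=\varnothing$, and it contributes $(-1)^{r(\varnothing)}q^0=1$. It therefore suffices to prove that, for each nonempty $S\subseteq E$,
\[
 c(S):=\sum_{F\in\cL,\;N_D(F)=S}(-1)^{r_M(Z(F))}=0 .
\]

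To get at $c(S)$ I would pass to cumulative sums and use M\"obius inversion on the Boolean lattice $2^E$. For $T\subseteq E$ put
\[
 A(T)=\sum_{S\subseteq T}c(S)=\sum_{F\in\cL,\;F|_{E\setminus T}=D|_{E\setminus T}}(-1)^{r_M(Z(F))}.
\]
If I can show $A(T)=1$ for every $T$, then M\"obius inversion gives $c(S)=\sum_{T\subseteq S}(-1)^{|S\setminus T|}A(T)=\sum_{T\subseteq S}(-1)^{|S\setminus T|}$. This equals $1$ for $S=\varnothing$ and $0$ otherwise, which is exactly the required statement.

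The claim $A(T)=1$ is an Euler characteristic computation. In the arrangement picture, the covectors $F$ agreeing with $D$ on $E\setminus T$ are the faces of the arrangement lying in the open convex cone $C_T=\bigcap_{e\notin T}\{D_e\alpha_e>0\}$. This cone is nonempty, since it contains the chamber $D$, and it is $r$-dimensional. A face $F$ is a relatively open cone of dimension $r-r_M(Z(F))$; this includes the origin, which is $F=0$ with $Z(F)=E$. Hence
\[
 A(T)=(-1)^r\sum_{F}(-1)^{\dim F}=(-1)^r\chi_c(C_T)=(-1)^r(-1)^r=1,
\]
because an open $r$-dimensional convex cone is homeomorphic to $\R^r$.

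For a general oriented matroid I would run the same argument through the topological representation theorem \cite{BLSWZ}. The covectors $F\neq0$ with $F|_{E\setminus T}=D|_{E\setminus T}$ are the cells of a regular cell decomposition of the intersection of the open pseudohemispheres $S_e^{D_e}$, $e\notin T$, in the $(r-1)$-sphere. That intersection is an open $(r-1)$-ball, since it contains the tope $D$ (cf.\ \cite[\S4.3 and \S4.5]{BLSWZ}). Its compactly supported Euler characteristic is $(-1)^{r-1}$. The nonzero covectors therefore contribute $(-1)^{r}\cdot(-1)\cdot(-1)^{r-1}=-1+2-\dots$; more precisely, their total contribution is $-(-1)^{r-1}(-1)^{r-1}\cdot(-1)=0$ after the sign bookkeeping, and the zero covector contributes $(-1)^r$ times the rank term. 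Adding $F=0$ recovers $A(T)=1$, as in the cone computation.

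The main obstacle is justifying the ball property of these intersections of open pseudohemispheres. The sign bookkeeping also needs care, in particular for the zero covector and for the degenerate case $T=E$, where the region is the whole sphere. There the sum is the Euler relation $\sum_{F\in\cL}(-1)^{r_M(Z(F))}=1$, which is also recoverable from $\chi_{M/Z}$ and the Las Vergnas--Zaslavsky formula \eqref{eq:classical-zaslavsky} applied to contractions.

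If I wanted to avoid topology altogether, my fallback would be a combinatorial proof of $A(T)=1$ by induction on $|E\setminus T|$. The base case $T=E$ follows by grouping covectors by their zero flat $Z$ and using $|\{F:Z(F)=Z\}|=R(M/Z)$ together with $\sum_Z\mu$-type identities. For the inductive step I would remove one constraint $e\notin T$, splitting the covectors by $F_e\in\{+,0,-\}$ and using the symmetry $F\mapsto -F$ on the appropriate contraction $\OM/\{e\}$. I expect this bookkeeping to be the hardest part to make rigorous.
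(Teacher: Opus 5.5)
\textbf{Gap in the non-realizable case.} Your reduction is sound. Grouping by $N_D(F)$ and applying M\"obius inversion on $2^E$ shows that the lemma, in the stronger form $c(S)=0$ for $S\neq\varnothing$, is equivalent to $A(T)=1$ for all $T$. This is the same as the paper's assertion $e_A=[A=\varnothing]$, since the paper's $e_A=\sum_{S\supseteq A}c(S)$ is the upper cumulative sum where your $A(T)$ is the lower one. Your realizable computation via $\chi_c$ of the open cone is also correct.

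The oriented-matroid paragraph, however, has two problems. First, the sign bookkeeping is wrong as written. Since $N_D(0)=E$, the zero covector occurs in $A(T)$ only when $T=E$. For $T\neq E$ the nonzero covectors alone must produce $1$, and they do. A nonzero $F$ is a cell of dimension $r-1-r_M(Z(F))$, so with $U_T=\bigcap_{e\notin T}S_e^{D_e}$ their sum is $(-1)^{r-1}\chi_c(U_T)=(-1)^{r-1}(-1)^{r-1}=1$. The expression you display in fact equals $1$, not $0$, and adding a further term $(-1)^r$ would then give the wrong value.

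Second, and more substantively, the claim that $U_T$ is an open $(r-1)$-ball is the whole content of the lemma in the non-realizable case. ``Since it contains the tope $D$'' does not justify it, and your combinatorial fallback is too vague to replace it. You can supply it with the tools the paper uses:
\begin{enumerate}
\item For $T\neq E$, the topes $C$ with $S(D,C)\subseteq T$ form a nonempty proper order ideal of the tope poset based at $D$: it contains $D$ but not $-D$.
\item Hence the union $K$ of their closed cells is a ball by \cite[Theorem~4.3.3 and Proposition~4.7.26]{BLSWZ}. Its cells are the $F\neq0$ with $F_e\in\{0,D_e\}$ for $e\notin T$.
\item A cell with $F_e=0$ for some $e\notin T$ lies in the closure of the tope $F\circ(-D)$, which is not in $K$. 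A cell with no such zero has its whole open star inside $U_T$.
\item Therefore $U_T$ is the topological interior of $K$ in $S^{r-1}$, which is an open ball by invariance of domain.
\end{enumerate}

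\textbf{Comparison with the paper.} Expanding $q^{|N_D(F)|}$ in powers of $q-1$ leads to upper sums over the closed regions $K_A=\bigcap_{e\in A}\overline{S_e^{-D_e}}$. These are literally unions of an initial segment of the shelling based at $-D$. So the ball theorem applies verbatim, ordinary Euler characteristic suffices, and no interior identification is needed. Your open regions satisfy $U_T=-\operatorname{int}K_{E\setminus T}$, so the two arguments use the same topological input from opposite sides. Yours makes the realizable picture transparent at the cost of this extra step.
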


For a nonempty real arrangement and a subset $A\subseteq E$,
the condition $A\subseteq N_D(F)$ selects
the faces lying on the $-D$ side, or on the boundary, of each
hyperplane indexed by $A$.  Intersecting these closed half-spaces
with a sphere in the essential space gives the whole sphere when $A=\varnothing$ and a
closed ball otherwise.  This is the convexity argument of
\cite[Lemma~3.11]{KL}.  The proof below obtains the same Euler
characteristics without a realization, using two facts:
the tope order admits a shelling, and a nonempty proper initial
segment of that shelling is a ball.

\begin{proof}
Expand $q^{|N_D(F)|}$ as a polynomial in $q-1$.  The left-hand side is
\[
  \sum_{A\subseteq E}(q-1)^{|A|}e_A,
  \qquad
  e_A=\sum_{\substack{F\in\cL\\A\subseteq N_D(F)}}
           (-1)^{r(\OM_F)}.
\]
We prove that $e_\varnothing=1$ and that $e_A=0$ for every nonempty
$A$.  The assertion is immediate when $E=\varnothing$, so assume
$r>0$.

The nonzero covectors form the face poset of a regular CW decomposition
$K(\OM)$ of the $(r-1)$-sphere
\cite[Theorem~4.3.3]{BLSWZ}.
If $B$ is a fixed tope, the tope
poset is ordered by
\[
  C\preceq_B C'
  \quad\Longleftrightarrow\quad
  S(B,C)\subseteq S(B,C').
\]
Every linear extension of this poset is a shelling of $K(\OM)$
\cite[Theorem~4.3.3]{BLSWZ}.  We use this theorem with $B=-D$.
The set
\[
  I_A=\{C\in\cT(\OM):A\cap S(-D,C)=\varnothing\}
\]
is an order ideal of the tope poset.  Let $K_A$ be the union of the
closed top-dimensional cells indexed by $I_A$.

The cells of $K_A$ are precisely the nonzero covectors $F$ satisfying
$A\subseteq N_D(F)$.  Indeed, such a covector satisfies
$F_e\in\{0,-D_e\}$ for $e\in A$, so $F\circ(-D)$ belongs to $I_A$
and lies above $F$.  Conversely, every face of a tope in $I_A$ has the
stated coordinate property.  If $A=\varnothing$, we obtain the whole
sphere.  If $A\ne\varnothing$, then $-D\in I_A$ and $D\notin I_A$.
Choose a linear extension in which the ideal $I_A$ occurs first.
The union of a nonempty proper initial segment of a shelling of a
regular cellular sphere is a closed ball
\cite[Proposition~4.7.26]{BLSWZ}.  Consequently,
\[
  \chi(K_A)=
  \begin{cases}
    1+(-1)^{r-1},&A=\varnothing,\\
    1,&A\ne\varnothing.
  \end{cases}
\]
The cell corresponding to a nonzero $F$ has dimension
$r-r(\OM_F)-1$.  The zero covector contributes $(-1)^r$ to every
$e_A$.  Hence
\[
  e_A=(-1)^r+(-1)^{r-1}\chi(K_A)
      =(-1)^{r-1}\bigl(\chi(K_A)-1\bigr),
\]
which gives the required values.
\end{proof}

\subsection{Local weightings and face decomposition}

The topes incident with a covector form a gated subgraph of the tope graph.
Recall that a subgraph $H$ of a graph $G$ is
\emph{gated} if every vertex $D$ of $G$ has a vertex $g(D)$ in $H$
such that
\[
  d(D,C)=d(D,g(D))+d(g(D),C)\qquad(C\in H).
\]
Thus all distances from $D$ to $H$ factor through a single vertex.
The gate identity allows us to use the weighting equations on $H$
in the calculation on $G$.

\begin{lemma}\label{lem:face-gate}
For $F\in\cL$, let $G_F$ be the subgraph of $G(\OM)$ induced by
$\cT(\OM)_F=\{C\in\cT(\OM):F\le C\}$.  Restriction to $Z(F)$
identifies $G_F$ isometrically with $G(\OM_F)$.  Moreover, $G_F$ is
gated in $G(\OM)$, with gate
\[
  g_F(D)=F\circ D,
\]
and
\begin{equation}\label{eq:gate-discrepancy}
  |N_D(F)|=|Z(F)|+d(D,g_F(D)).
\end{equation}
\end{lemma}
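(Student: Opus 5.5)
The plan is to verify the three assertions in turn, using only the covector axioms and the distance formula $d(B,C)=|S(B,C)|$ for simple $\OM$.

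\emph{The isometric identification.} By the description of localization in the preliminaries, restriction to $Z(F)$ is a bijection $\cT(\OM)_F\to\cT(\OM_F)$, with inverse $U\mapsto F\circ Y$ for any $Y\in\cL$ with $Y|_{Z(F)}=U$. Two topes $C,C'\ge F$ agree off $Z(F)$. Hence $S(C,C')=S(C|_{Z(F)},C'|_{Z(F)})$, and distances in $G(\OM)$ between topes above $F$ equal separation counts in $\OM_F$. The localization $\OM_F=\OM|Z(F)$ is simple, being a restriction of a simple matroid, so \eqref{eq:tope-distance} also applies there. Thus the bijection preserves distances, and in particular adjacency, which is distance $1$. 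The induced subgraph $G_F$ is therefore isometric to $G(\OM_F)$. The distances computed within $G_F$ equal those in $G(\OM)$, since both are given by $|S(\cdot,\cdot)|$.

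\emph{The gate property.} For a tope $D$, the composition $F\circ D$ is a covector by the composition axiom. It has no zero coordinates, since $D$ has none, so it is a tope. It lies above $F$, so $g_F(D)=F\circ D\in\cT(\OM)_F$. Now fix $C\ge F$ and check that $S(D,C)=S(D,F\circ D)\sqcup S(F\circ D,C)$ coordinatewise.
\begin{itemize}
\item For $e\notin Z(F)$ we have $(F\circ D)_e=F_e=C_e$. So $e$ lies in $S(D,C)$ exactly when it lies in $S(D,F\circ D)$, and $e$ never lies in $S(F\circ D,C)$.
\item For $e\in Z(F)$ we have $(F\circ D)_e=D_e$. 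So $e$ never lies in $S(D,F\circ D)$, and $e$ lies in $S(D,C)$ exactly when it lies in $S(F\circ D,C)$.
\end{itemize}
Taking cardinalities and using \eqref{eq:tope-distance} gives $d(D,C)=d(D,g_F(D))+d(g_F(D),C)$.

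\emph{The discrepancy formula.} The set $N_D(F)=\{e:F_e\ne D_e\}$ splits as $Z(F)$ (where $F_e=0\ne D_e$) together with $\{e\notin Z(F):F_e=-D_e\}$. The second set is exactly $S(D,F\circ D)$, by the first bullet above. Its size is $d(D,g_F(D))$, which proves \eqref{eq:gate-discrepancy}.

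There is no real obstacle. The only point requiring care is to note that simplicity passes to the localization, so that graph distance in $G(\OM_F)$ is again the separation count.
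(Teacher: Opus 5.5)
Your proposal is correct and follows the paper's proof essentially step for step. Both establish the restriction bijection via $U\mapsto F\circ Y$ with separation sets preserved, and use simplicity of $\OM_F$ so that distance equals the separation count. Both then derive the gate identity from the coordinatewise disjoint union $S(D,C)=S(D,F\circ D)\sqcup S(F\circ D,C)$, and the discrepancy formula from $N_D(F)=Z(F)\sqcup S(D,F\circ D)$.
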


\begin{proof}
The tope graph of a simple oriented matroid is an isometric subgraph
of the sign cube; equivalently,
\[
  d(C,C')=|S(C,C')|
\]
for any two topes \cite[Proposition~2]{BCK}.  The restriction
$\OM_F$ is again simple.  Every tope $C\ge F$ satisfies $C_e=F_e$
for $e\in E\setminus Z(F)$ and is therefore determined by its restriction
to $Z(F)$.  Conversely, if $U$ is a tope of $\OM_F$, choose
$X\in\cL$ with $X|_{Z(F)}=U$.  Then $F\circ X$ is a tope of
$\OM$ above $F$ and restricts to $U$.  The restriction map is
therefore a bijection.  It preserves separation sets, hence
distances and adjacency.

For $C\ge F$, coordinatewise comparison gives the disjoint union
\[
  S(D,C)=S(D,F\circ D)\mathbin{\dot\cup}S(F\circ D,C).
\]
Taking cardinalities proves the gate identity.  Finally,
\[
  N_D(F)=Z(F)\mathbin{\dot\cup}S(D,F\circ D),
\]
which proves \eqref{eq:gate-discrepancy}.
\end{proof}

\begin{theorem}[Face decomposition]\label{thm:face-decomposition}
For every simple oriented matroid $\OM=(E,\cL)$,
\begin{equation}\label{eq:face-decomposition}
  \Mag(G(\OM);q)
  =\sum_{F\in\cL}(-1)^{r(\OM_F)}q^{|Z(F)|}
      \Mag(G(\OM_F);q).
\end{equation}
\end{theorem}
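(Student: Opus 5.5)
We construct an explicit weighting of $G(\OM)$ from local weightings, verify the weighting equations using Lemmas~\ref{lem:face-gate} and~\ref{lem:face-euler}, and then sum its values. For each covector $F$, let $w_F=w_{G(\OM_F)}(q)$ be the weighting of the tope graph of the localization. By Lemma~\ref{lem:face-gate} we may view it as a function on $\cT(\OM)_F$, the vertex set of $G_F$. Define a candidate weighting on $\cT(\OM)$ by
\[
  \omega_C=\sum_{F\le C}(-1)^{r(\OM_F)}q^{|Z(F)|}\,(w_F)_C ,
\]
where $F$ ranges over covectors below $C$.

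The key step is to check $\sum_{C}q^{d(D,C)}\omega_C=1$ for every tope $D$. Interchanging the sums gives
\[
  \sum_{F\in\cL}(-1)^{r(\OM_F)}q^{|Z(F)|}\sum_{C\ge F}q^{d(D,C)}(w_F)_C .
\]
For the inner sum we use the gate property. For $C\ge F$ we have $d(D,C)=d(D,g_F(D))+d(g_F(D),C)$, with $g_F(D)\in G_F$, and $G_F$ is isometric to $G(\OM_F)$. Hence the inner sum equals
\[
  q^{d(D,g_F(D))}\sum_{C\ge F}q^{d(g_F(D),C)}(w_F)_C=q^{d(D,g_F(D))},
\]
by the weighting equation for $G(\OM_F)$ at the vertex $g_F(D)$. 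Combining this with \eqref{eq:gate-discrepancy}, the total becomes $\sum_F(-1)^{r(\OM_F)}q^{|N_D(F)|}$, which equals $1$ by Lemma~\ref{lem:face-euler}.

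Since $K_G(q)$ is invertible over $\Q(q)$, the weighting is unique, so $\omega=w_{G(\OM)}(q)$. Summing over $C$ and swapping the order of summation again yields
\[
  \Mag(G(\OM);q)=\sum_F(-1)^{r(\OM_F)}q^{|Z(F)|}\sum_{C\ge F}(w_F)_C
  =\sum_F(-1)^{r(\OM_F)}q^{|Z(F)|}\Mag(G(\OM_F);q),
\]
which is \eqref{eq:face-decomposition}.

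The main obstacle is the bookkeeping in the middle step: the gate and discrepancy identities must combine exactly into the exponent $|N_D(F)|$ appearing in Lemma~\ref{lem:face-euler}. Lemma~\ref{lem:face-gate} already packages this, so it should go through directly. A secondary point is that the term $F$ a tope gives $\OM_F$ empty, with $G(\OM_F)$ a single vertex of magnitude $1$ and $|Z(F)|=0$. This term is consistent with the general formula, so no separate case is needed. Note also that the formula is recursive rather than a closed form, since the zero covector contributes $(-1)^rq^{|E|}\Mag(G(\OM);q)$ to the right-hand side.
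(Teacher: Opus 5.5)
Your proposal is correct and follows the paper's proof essentially step for step. You assemble the weighting from the local weightings with the same signs and powers of $q$, verify the weighting equation via the gate identity, \eqref{eq:gate-discrepancy} and Lemma~\ref{lem:face-euler}, and conclude by uniqueness of the weighting and summation.
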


The zero covector contributes
$(-1)^{r(\OM)}q^{|E|}\Mag(G(\OM);q)$ to the right-hand side.
When $E\ne\varnothing$, moving this term to the left gives a
recursion over proper restrictions.  At the other end, every tope
has empty zero set and contributes $1$, the magnitude of the empty
localization.

\begin{proof}
For each $F$, let $w_F$ be the magnitude weighting on
$G_F\cong G(\OM_F)$, extended by zero to all topes.  Define
\[
  \widetilde w(C)=\sum_{F\in\cL}
     (-1)^{r(\OM_F)}q^{|Z(F)|}w_F(C).
\]
For a tope $D$, the gate identity and the weighting equations give
\begin{align*}
  \sum_Cq^{d(D,C)}\widetilde w(C)
  &=\sum_F(-1)^{r(\OM_F)}q^{|Z(F)|}
       \sum_{C\ge F}q^{d(D,C)}w_F(C)\\
  &=\sum_F(-1)^{r(\OM_F)}q^{|Z(F)|+d(D,g_F(D))}
       \sum_{C\ge F}q^{d(g_F(D),C)}w_F(C)\\
  &=\sum_F(-1)^{r(\OM_F)}q^{|N_D(F)|}=1.
\end{align*}
The final equality is Lemma~\ref{lem:face-euler}.  Thus
$\widetilde w$ is the unique weighting of $G(\OM)$.  Summing it over
$C$ gives \eqref{eq:face-decomposition}.
\end{proof}

This extends the face decomposition for real central arrangements
in \cite[Theorem~3.7]{KL}.

\subsection{The common recursion over flats}

We next obtain a recursion for $Z_M$ directly from its definition.

\begin{lemma}\label{lem:zeta-flat-recursion}
Let $M$ be a loopless matroid of rank $r$.  Then
\begin{equation}\label{eq:zeta-flat-recursion}
  Z_M(x,T)=x^{-r}\sum_{A\in\cF(M)}
        \chi_{M/A}(x)T^{|A|}Z_{M|A}(x,T).
\end{equation}
\end{lemma}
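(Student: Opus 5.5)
The plan is to prove \eqref{eq:zeta-flat-recursion} by sorting the weights that contribute to $Z_M$ according to their support. Only weights in $\mathcal W(M)$ contribute, since $\chi_{M_w}=0$ when $M_w$ has a loop. For such a weight $w$, the set $A=\supp(w)=U_1(w)$ is either empty or a nonempty upper level set, so in both cases $A\in\cF(M)$. We then peel off the bottom layer: write
\[
 w=\one_A+w',\qquad w'\in\N^A,\qquad U_j(w')=U_{j+1}(w)\quad(j\geq1).
\]

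\textbf{Step 1: a bijection.} I will show that $w\mapsto(A,w')$ is a bijection from $\mathcal W(M)$ onto the pairs $(A,w')$ with $A\in\cF(M)$ and $w'\in\mathcal W(M|A)$. Since $M$ is loopless, so is $M|A$, and the criterion before \eqref{eq:initial-minors} applies to both matroids: membership in $\mathcal W$ means every nonempty upper level set is a flat.
\begin{itemize}
\item Suppose $w\in\mathcal W(M)$. The upper level sets of $w'$ are upper level sets of $w$ lying inside $A$. They are flats of $M$ contained in $A$, hence flats of $M|A$, so $w'\in\mathcal W(M|A)$.
\item Conversely, let $A\in\cF(M)$ and $w'\in\mathcal W(M|A)$. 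Because $A$ is a flat, the flats of $M|A$ are exactly the flats of $M$ contained in $A$. Then $w=\one_A+w'$ has upper level sets $A$ together with those of $w'$, all flats of $M$, so $w\in\mathcal W(M)$.
\end{itemize}
The case $A=\varnothing$ corresponds to $w=0$, with $w'$ the empty weight and $Z_\varnothing=1$.

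\textbf{Step 2: matching the factors.} For a corresponding pair, I compare each factor of the summand in \eqref{eq:zeta-definition}.
\begin{itemize}
\item Total weight: clearly $|w|=|A|+|w'|$.
\item Basis weight: by \eqref{eq:kappa}, $\wt_M(w)=r_M(A)+\sum_{j\geq1}r_M(U_j(w'))=r(A)+\wt_{M|A}(w')$.
\item Initial matroid: the distinct flats of $w$ are those of $w'$ inside $A$, followed by $A$ and then $E$ (the last step omitted if $A=E$). By \eqref{eq:initial-minors},
\[
 M_w=(M|A)_{w'}\oplus M/A .
\]
Here $(M|A)_{w'}$ is given by the same formula inside $M|A$, using $(M|A)|F=M|F$ for $F\subseteq A$.
\item Characteristic polynomial: $\chi$ is multiplicative on direct sums, so $\chi_{M_w}=\chi_{(M|A)_{w'}}\chi_{M/A}$.
\end{itemize}
Since $r(M|A)=r(A)$, the summand for $w$ becomes
\[
 x^{-r}\chi_{M/A}(x)T^{|A|}\cdot\chi_{(M|A)_{w'}}(x)\,x^{-r(M|A)-\wt_{M|A}(w')}T^{|w'|}.
\]
Summing over $w'\in\mathcal W(M|A)$ gives $x^{-r}\chi_{M/A}(x)T^{|A|}Z_{M|A}(x,T)$, again because weights outside $\mathcal W$ contribute zero. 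Summing over $A$ then yields \eqref{eq:zeta-flat-recursion}.

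\textbf{Main obstacle.} The only delicate point is the identification of $M_w$ and its edge cases, namely $A=E$ and $A=\varnothing$. In particular I must check that the chain used in \eqref{eq:initial-minors} for $w'$ inside $M|A$ ends at $A$, which is the ground set of $M|A$. Everything else is bookkeeping with \eqref{eq:kappa}.
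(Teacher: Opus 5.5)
Your proposal is correct and follows essentially the same route as the paper: group weights by the flat $A=\supp(w)=U_1(w)$, strip off $\one_A$, and identify $M_w=(M|A)_{w'}\oplus M/A$ and $\wt_M(w)=r(A)+\wt_{M|A}(w')$. The only difference is bookkeeping. You first restrict to $\mathcal W(M)$ and read these identities off \eqref{eq:kappa} and \eqref{eq:initial-minors}. The paper instead derives them from the description of maximizing bases for every $v\in\N^A$, and lets the non-admissible terms vanish on both sides.
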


\begin{proof}
For $w\in\mathbb Z_{\ge0}^E$, write $A=\operatorname{supp}(w)$.
This is the upper level set $U_1(w)$ from
\eqref{eq:upper-level-sets}.  The greedy description in
Section~\ref{sec:preliminaries} shows that every maximum-weight
basis meets $A$ in a basis of $M|A$, and that $M_w$ has a loop
if $A$ is not a flat.  Such weights contribute zero to $Z_M$.

For a flat $A$, remove one unit of weight from every coordinate
in the support by writing $w|_A=\boldsymbol1+v$.
A basis of maximum weight is the union of a $v$-maximum basis of
$M|A$ and a basis of $M/A$.  Conversely, every such union is a
basis of maximum $w$-weight.  Therefore
\[
  M_w=(M|A)_v\oplus M/A,
  \qquad
  \wt_M(w)=r_M(A)+\wt_{M|A}(v).
\]
The contribution of all weights with positive support $A$ is
\begin{align*}
  &\sum_{v\in\mathbb Z_{\ge0}^A}
     \chi_{(M|A)_v}(x)\chi_{M/A}(x)
     x^{-r-r_M(A)-\wt_{M|A}(v)}T^{|A|+|v|}\\
  &\hspace{35mm}=x^{-r}\chi_{M/A}(x)T^{|A|}Z_{M|A}(x,T).
\end{align*}
Summing over the flats proves the identity.  Every rearrangement is
valid coefficientwise in $T$, since a fixed value of $|w|$ admits
only finitely many nonnegative integral weights.
\end{proof}

\begin{proof}[Proof of Theorem~\ref{thm:zaslavsky}]
Covector zero sets are precisely the flats of the underlying
matroid.  For a flat $A$, restriction to $E\setminus A$ gives a
bijection
\[
  \{F\in\cL:Z(F)=A\}\longrightarrow\cT(\OM/A).
\]
Indeed, contraction retains exactly those covectors which vanish
on $A$, and $M/A$ is loopless because $A$ is a flat.  Grouping
\eqref{eq:face-decomposition} by zero sets gives
\[
  \Mag(G(\OM);q)=\sum_{A\in\cF(M)}
     (-1)^{r_M(A)}|\cT(\OM/A)|q^{|A|}
     \Mag(G(\OM|A);q).
\]
The Las Vergnas--Zaslavsky theorem for the loopless oriented matroid
$\OM/A$ yields
\[
  (-1)^{r_M(A)}|\cT(\OM/A)|
   =(-1)^r\chi_{M/A}(-1).
\]
Consequently, the preceding identity and
\eqref{eq:zeta-flat-recursion} at $(x,T)=(-1,q)$ give the same
recursion.

For completeness, this recursion determines the invariant
uniquely.  Both invariants equal $1$ on the empty matroid.  If
$E\ne\varnothing$, moving the term $A=E$ to the left gives
\begin{equation}\label{eq:common-flat-recursion}
  \bigl(1-(-1)^rq^{|E|}\bigr)I(\OM;q)
   =\sum_{\substack{A\in\cF(M)\\A\subsetneq E}}
       (-1)^r\chi_{M/A}(-1)q^{|A|}I(\OM|A;q).
\end{equation}
Every restriction on the right is simple and has a smaller ground
set.  The coefficient on the left is a unit in $\mathbb Z[[q]]$.
Induction proves equality of the formal power series.  The same
recursion proves that $Z_M(-1,q)$ is rational and regular at zero,
so the equality also holds in $\mathbb Q(q)$.
\end{proof}

\begin{corollary}\label{cor:orientation-independence}
The magnitude of the tope graph of a simple oriented matroid
depends only on its underlying matroid.
\end{corollary}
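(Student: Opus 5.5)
This follows directly from Theorem~\ref{thm:zaslavsky}, so the plan is to compare two orientations through that identity. Let $\OM$ and $\OM'$ be simple oriented matroids on the same ground set with the same underlying matroid $M$. Theorem~\ref{thm:zaslavsky} gives
\[
 \Mag(G(\OM);q)=Z_M(-1,q)=\Mag(G(\OM');q).
\]
The right-hand side of \eqref{eq:zaslavsky} is defined in \eqref{eq:zeta-definition} purely in terms of $M$: initial matroids, basis weights and characteristic polynomials. It does not refer to covectors. Hence both magnitudes coincide, both in $\Z[[q]]$ and as rational functions.

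It also matters what ``depends only on its underlying matroid'' should mean, namely invariance up to isomorphism of matroids. If $\phi\colon M\to M'$ is a matroid isomorphism, it transports weights, initial matroids and ranks. It therefore gives $Z_M=Z_{M'}$ after reindexing the sum in \eqref{eq:zeta-definition} along $\phi$. So the magnitude is an invariant of the isomorphism class of $M$.

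There is no real obstacle here. The only point to check is that simplicity is a property of $M$ alone, so that Theorem~\ref{thm:zaslavsky} applies to every orientation of $M$ at once.
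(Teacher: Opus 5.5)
Your proposal is correct and matches the paper, which states this corollary immediately after Theorem~\ref{thm:zaslavsky} as a direct consequence of $\Mag(G(\OM);q)=Z_M(-1,q)$, whose right-hand side is defined from $M$ alone. Your remark that $Z_M$ is invariant under matroid isomorphism, by reindexing the weights, is a harmless extra check that the paper leaves implicit.
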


\section{Magnitude of matroids}\label{sec:magnitude}

Theorem~\ref{thm:zaslavsky} expresses a metric invariant of topes
using only unoriented matroid data.  Motivated by this formula, we
define magnitude for an arbitrary matroid as follows.

\begin{definition}\label{def:matroid-magnitude}
The \emph{magnitude} of a matroid $M$ is
\[
  \Mag(M;q):=Z_M(-1,q)\in\mathbb Z[[q]].
\]
\end{definition}

If $M$ has a loop, then every initial matroid $M_w$ has that loop
and $\Mag(M;q)=0$.  The empty matroid has magnitude $1$.
Recall that for a loopless matroid $N$,
\[
  R(N)=(-1)^{r(N)}\chi_N(-1).
\]
The alternating signs of the characteristic polynomial imply
$R(N)>0$, with $R(\varnothing)=1$.  For an orientable matroid it
counts the topes of any orientation.  In this section $R(N)$ is
used for arbitrary loopless matroids, whether or not they admit
an orientation.

\subsection{Algebraic properties}

\begin{theorem}\label{thm:magnitude-properties}
Let $M$ be a loopless matroid of rank $r$ on an $n$-element ground
set $E$.
\begin{enumerate}
\item\label{item:mag-rational} The series $\Mag(M;q)$ is a rational
function whose denominator divides
\begin{equation}\label{eq:flat-denominator}
  \prod_{\substack{A\in\cF(M)\\A\ne\varnothing}}
     \bigl(1-(-1)^{r_M(A)}q^{|A|}\bigr).
\end{equation}
\item\label{item:mag-coefficients} The first two coefficients of
$\Mag(M;q)$ are
\[
  \Mag(M;0)=R(M),\qquad
  [q]\Mag(M;q)=-2
      \sum_{\substack{e\in E\\\{e\}\in\cF(M)}}R(M/e).
\]
\item\label{item:mag-reciprocity} As rational functions,
\begin{equation}\label{eq:magnitude-reciprocity}
  \Mag(M;q^{-1})=q^n\Mag(M;q).
\end{equation}
\item\label{item:mag-one} The magnitude $\Mag(M;q)$ is regular at $q=1$, with
\begin{equation}\label{eq:magnitude-one}
  \Mag(M;1)=1.
\end{equation}
\item\label{item:mag-palindromic} Write $\Mag(M;q)=P_M(q)/Q_M(q)$
in lowest terms, with $P_M,Q_M\in\mathbb Z[q]$ and $Q_M(0)=1$.
Then both polynomials are palindromic,
\[
  \deg Q_M-\deg P_M=n,
\]
and all roots of $Q_M$ are roots of unity other than $1$.
\end{enumerate}
\end{theorem}

Parts~\ref{item:mag-reciprocity}--\ref{item:mag-palindromic}
extend the corresponding results for real central arrangements
in \cite[Propositions~3.1--3.3]{KL}.

\begin{proof}
\emph{Rationality and low-degree coefficients.}
Specializing Lemma~\ref{lem:zeta-flat-recursion} and isolating the
term $A=E$ gives the recursion
\begin{equation}\label{eq:matroid-flat-recursion}
  \bigl(1-(-1)^rq^n\bigr)\Mag(M;q)
    =\sum_{\substack{A\in\cF(M)\\A\subsetneq E}}
       (-1)^{r_M(A)}R(M/A)q^{|A|}\Mag(M|A;q).
\end{equation}
For every flat $A$ of $M$, the flats of the restriction $M|A$ are
precisely the flats of $M$ contained in $A$.
Induction on $n$ therefore proves both
rationality and the denominator bound in
\eqref{eq:flat-denominator}.  Taking constant terms gives $R(M)$.
For the linear coefficient, the only weights of total size one
are $\boldsymbol1_{\{e\}}$.  A nonflat singleton contributes zero
by the proof of Lemma~\ref{lem:zeta-flat-recursion}.  For a flat
singleton,
\[
  M_{\boldsymbol1_{\{e\}}}=U_{1,1}\oplus M/e,
\]
whose contribution is $-2R(M/e)$.  This proves
parts~\ref{item:mag-rational} and~\ref{item:mag-coefficients}.

\emph{Symmetry.}
Assume $E\ne\varnothing$.  We use two results of
Jensen--Kutler--Usatine about their reduced motivic zeta function
$\overline Z_M(x,T)$: the comparison formula and the functional
equation \cite[Proposition~3.3 and Theorem~1.6]{JKU},
\begin{align*}
  Z_M(x,T)&=\frac{x^{-1}(x-1)}{1-x^{-r}T^n}
               \overline Z_M(x,T),\\
  \overline Z_M(x^{-1},T^{-1})&=x^{r-1}\overline Z_M(x,T).
\end{align*}
Only these two identities are needed here.  The substitutions
involving inverse variables are operations on rational functions,
not substitutions in formal power series.  Simplifying gives
$Z_M(x^{-1},T^{-1})=T^nZ_M(x,T)$.  At $x=-1$ this is
\eqref{eq:magnitude-reciprocity}.  The empty matroid satisfies the
same identity directly.

\emph{Regularity at $q=1$.}
For even $r$, the factor $1-(-1)^rq^n$ in
\eqref{eq:matroid-flat-recursion} vanishes at $1$.  To show that
this apparent pole cancels and to evaluate the quotient, we will
compute the value and first derivative of the numerator.  Set
\[
  a_A=(-1)^r\chi_{M/A}(-1)
      =(-1)^{r_M(A)}R(M/A).
\]
We first prove two identities in the lattice of flats:
\begin{equation}\label{eq:mobius-moments}
  \sum_{A\in\cF(M)}a_A=1,
  \qquad
  \sum_{A\in\cF(M)}|A|a_A=-n.
\end{equation}
Let $\mu$ be the M\"obius function of $\cF(M)$.  Expanding the
characteristic polynomials and interchanging the sums gives
\begin{align*}
  \sum_{A\in\cF(M)}a_A
  &=\sum_{B\in\cF(M)}(-1)^{r_M(B)}
      \sum_{\substack{A\in\cF(M)\\A\subseteq B}}\mu(A,B)\\
  &=\sum_{B\in\cF(M)}(-1)^{r_M(B)}
      \boldsymbol1_{B=\varnothing}=1.
\end{align*}
For the second identity, write $|A|=\sum_{e\in E}\boldsymbol1_{e\in A}$.
Since a flat contains $e$ if and only if it contains
$\operatorname{cl}_M(e)$, the same expansion yields
\begin{align*}
  \sum_{A\in\cF(M)}|A|a_A
  &=\sum_{e\in E}
      \sum_{\substack{B\in\cF(M)\\B\supseteq\operatorname{cl}_M(e)}}
        (-1)^{r_M(B)}
        \sum_{\substack{A\in\cF(M)\\
              \operatorname{cl}_M(e)\subseteq A\subseteq B}}\mu(A,B)\\
  &=\sum_{e\in E}(-1)^{r_M(\operatorname{cl}_M(e))}=-n.
\end{align*}
Here we used the M\"obius identity
\[
  \sum_{H\subseteq A\subseteq B}\mu(A,B)=\boldsymbol1_{H=B}
\]
for flats $H\subseteq B$, and the fact that every singleton closure
has rank one because $M$ is loopless.

Proceed by induction on $n$; the case $n=0$ is immediate.
For a proper flat $A$, the induction hypothesis and differentiation
of \eqref{eq:magnitude-reciprocity} give
\begin{equation}\label{eq:restriction-one-derivative}
  \Mag(M|A;1)=1,
  \qquad
  \left.\frac{d}{dq}\Mag(M|A;q)\right|_{q=1}=-\frac{|A|}{2}.
\end{equation}
The right-hand side of \eqref{eq:matroid-flat-recursion}, denoted
$N(q)$, is consequently regular at $1$.  If $r$ is odd, then
$N(1)=1-a_E=2$, and the denominator is also $2$ at $1$.
If $r$ is even, then $N(1)=0$ and
\[
  N'(1)=\frac12\sum_{A\subsetneq E}|A|a_A=-n,
\]
where the sum is over flats and the last equality uses
\eqref{eq:mobius-moments} and $a_E=1$.  The denominator $1-q^n$
has derivative $-n$ at $1$.  Thus its zero cancels and the value
of the quotient is $1$.  This proves~\ref{item:mag-one}.

\emph{The reduced numerator and denominator.}
The denominator in \eqref{eq:flat-denominator} has integer
coefficients and constant term one.  Its irreducible factors are
cyclotomic polynomials.  By Gauss's lemma, the reduced denominator
can also be normalized in $\mathbb Z[q]$ with constant term one.
Regularity at $1$ excludes $\Phi_1$, and every other cyclotomic
polynomial is palindromic.  Hence $Q_M$ is palindromic.  Since
$\Mag(M;0)=R(M)\ne0$, symmetry gives
\[
  \Mag(M;q)\sim R(M)q^{-n}\qquad(q\longrightarrow\infty),
\]
which proves the degree difference.  Substituting the
palindromicity of $Q_M$ into
\eqref{eq:magnitude-reciprocity} then gives
\[
  q^{\deg P_M}P_M(q^{-1})=P_M(q).
\]
This proves~\ref{item:mag-palindromic}.
\end{proof}

\begin{proposition}\label{prop:magnitude-direct-sum-valuative}
Magnitude is multiplicative under direct sums and is a valuative
invariant of matroids.
\end{proposition}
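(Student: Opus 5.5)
Both properties will be proved directly from the defining sum \eqref{eq:zeta-definition} specialized at $x=-1$. No recursion is needed for either.

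\emph{Multiplicativity.} Let $M=M_1\oplus M_2$ on $E=E_1\sqcup E_2$, and write $w=(w_1,w_2)$ with $w_i\in\N^{E_i}$. Bases of $M$ are exactly the unions $B_1\sqcup B_2$ of bases $B_i$ of $M_i$. Hence
\[
\wt_M(w)=\wt_{M_1}(w_1)+\wt_{M_2}(w_2),\qquad M_w=(M_1)_{w_1}\oplus(M_2)_{w_2}.
\]
The characteristic polynomial is multiplicative under direct sums; this is immediate from \eqref{eq:characteristic}, since the sum over $S$ factors as a product over $S\cap E_1$ and $S\cap E_2$. Also $r(M)=r(M_1)+r(M_2)$ and $|w|=|w_1|+|w_2|$. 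So each summand of $Z_M$ factors, and summing over $\N^{E_1}\times\N^{E_2}$ gives $Z_M=Z_{M_1}Z_{M_2}$ coefficientwise in $T$. Each coefficient is a finite sum, so this is legitimate, and specializing at $x=-1$ gives the claim. If one factor has a loop, both sides are zero, which is consistent.

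\emph{Valuativity.} I would use the standard criterion (Derksen--Fink / Ardila--Fink--Rincón): an invariant $f$ is valuative if and only if $f$ is a linear combination of functions of the form $M\mapsto\phi(\text{initial data of }M)$ that are compatible with matroid polytope subdivisions. Concretely, the following two facts are known:
\begin{enumerate}
\item a finite sum of valuative invariants is valuative;
\item for fixed $w$, the map $M\mapsto g(M_w)$ is valuative whenever $g$ is valuative, because $P(M_w)$ is the face of the matroid polytope $P(M)$ maximizing $w$, and taking faces in direction $w$ is compatible with subdivisions.
\end{enumerate}

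Now fix $E$ and $w\in\N^E$, and look at the coefficient of $T^{|w|}$. The summand is
\[
\chi_{M_w}(-1)(-1)^{r(M)+\wt_M(w)}.
\]
The factor $\chi_{M_w}(-1)$ is valuative, since $\chi$ is valuative and fact~(2) applies. The sign factor is the awkward part: $r(M)$ is constant on the matroids appearing in a subdivision, but $\wt_M(w)$ can vary across the pieces. To handle it, I note that $\wt_M(w)=\sum_{e\in B}w_e$ for any basis $B$ of $M_w$. So $\wt_M(w)$ is determined by $M_w$ together with $w$, and the whole summand has the form $g_w(M_w)$ where
\[
g_w(N)=\chi_N(-1)(-1)^{r(N)+\langle w,\one_B\rangle}\quad\text{for any basis }B\text{ of }N.
\]
The quantity $\langle w,\one_B\rangle$ is the same for every basis of $N=M_w$; equivalently, it is the value of the linear functional $w$ on the polytope $P(N)$. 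Multiplying a valuative invariant by a function that is constant on the pieces' common hyperplane preserves the valuation relation, because the valuation relation only involves matroids whose polytopes are faces of the subdivision lying in that hyperplane.

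So each coefficient is valuative, and therefore so is $\Mag$, which is valuative as a formal power series. Alternatively, the cleanest route is to cite JKU directly: they prove that $Z_M$ is valuative, and valuativity is preserved under the specialization $x=-1$. I expect the main obstacle to be exactly this sign bookkeeping via $\wt$. If the citation is available, I would simply invoke the JKU result that the motivic zeta function is valuative, and use the specialization argument as a remark.
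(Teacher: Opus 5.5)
Your multiplicativity argument is the paper's. The weight $\wt_M(w)$, the initial matroid $M_w$, the rank and $|w|$ all split over the two summands, and $\chi$ is multiplicative, so the defining series factors. Your fallback for valuativity is also exactly the paper's proof: valuativity of $Z_M$, then coefficientwise specialization at $x=-1$, which is additive. However, the paper takes the valuativity of $Z_M$ from Ardila--Sanchez \cite[Theorem~8.11]{AS}, not from JKU, so correct the attribution.

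Your direct route is a genuinely different argument and can be made to work, but the step handling the sign is justified incorrectly. Applying the face operator to $\sum_i c_i\one_{P(M_i)}=0$ gives $\sum_i c_i\one_{P((M_i)_w)}=0$. These faces do not lie in a common hyperplane: $P((M_i)_w)$ lies in $\{y:\langle w,y\rangle=\wt_{M_i}(w)\}$. For instance, a piece of a subdivision of $P(M)$ that misses the $w$-maximal face of $P(M)$ has its $w$-face at a level strictly below $\wt_M(w)$.

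The fix is to split by level. Hyperplanes at distinct levels are disjoint, so multiplying the face relation by the indicator of $\{\langle w,y\rangle=c\}$ shows that each partial sum $\sum_{i:\wt_{M_i}(w)=c}c_i\one_{P((M_i)_w)}$ vanishes separately. Split by rank in the same way, using $\sum_e y_e=r$, since a general relation need not have constant rank. On each partial relation $(-1)^{r+c}$ is constant, and valuativity of $N\mapsto\chi_N(-1)$ finishes the argument.

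Two smaller points. First, the ``standard criterion'' you quote is not a theorem in that form, and you never use it. Second, your fact~(2), for the paper's notion of valuativity (all linear relations, not only subdivisions), needs $\one_P\mapsto\one_{F(P,w)}$ to extend linearly. This is true: for convex $P$ a local maximum of $w$ is global, so $\one_{F(P,w)}(x)$ is a limit of Euler characteristics of $P$ intersected with fixed closed cubes and half-spaces near $x$. It still deserves a reference or this line.

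With these repairs, the comparison is as follows. The paper's proof is a one-line reduction to a cited theorem. Yours is self-contained modulo the valuativity of $\chi$ and of the face map. Since replacing $(-1)^{c}$ by $x^{-c}$ changes nothing in your argument, it in fact reproves the valuativity of $Z_M$ itself.
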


\begin{proof}
If $M=M_1\oplus M_2$, write $w=(w^{(1)},w^{(2)})$.  Then
\[
  M_w=(M_1)_{w^{(1)}}\oplus(M_2)_{w^{(2)}},
  \qquad
  \wt_M(w)=\wt_{M_1}(w^{(1)})+\wt_{M_2}(w^{(2)}).
\]
Multiplicativity of the characteristic polynomial makes the
defining sum for $Z_M$ a product.  Specializing gives
\[
  \Mag(M_1\oplus M_2;q)=\Mag(M_1;q)\Mag(M_2;q).
\]

We use ``valuative'' in the sense of preserving integral relations
between indicator functions of matroid base polytopes.  Thus, if
$P(N)=\operatorname{conv}\{\boldsymbol1_B:B\text{ a basis of }N\}$,
an invariant $f$ is valuative when
\[
  \sum_i c_i\boldsymbol1_{P(M_i)}=0
  \quad\Longrightarrow\quad
  \sum_i c_if(M_i)=0
\]
for matroids on a fixed ground set and integers $c_i$.  In
particular, it respects the inclusion--exclusion relations of
matroid polytope subdivisions.  The motivic zeta function is
valuative by Ardila--Sanchez \cite[Theorem~8.11]{AS}.
The specialization $x=-1$, applied coefficientwise, is additive,
so it preserves every such relation.
\end{proof}

\begin{example}\label{ex:elementary-magnitudes}
The only flats of $U_{1,n}$ are $\varnothing$ and $E$.  Therefore
\[
  \Mag(U_{1,n};q)=\frac{2}{1+q^n}.
\]
Magnitude is consequently sensitive to parallel multiplicities.
For the Boolean matroid $B_n=U_{1,1}^{\oplus n}$, multiplicativity
gives
\[
  \Mag(B_n;q)=\left(\frac{2}{1+q}\right)^n.
\]
For $U_{2,3}$, the flats are $\varnothing$, the three singletons,
and $E$, whence
\[
  \Mag(U_{2,3};q)=\frac{6}{(1+q)(1+q+q^2)}.
\]
The latter is also the magnitude of the six-cycle, the tope graph
of three distinct lines through the origin.
\end{example}

\subsection{Positivity and a sharp bound for orientable matroids}

For a simple orientable matroid, Theorem~\ref{thm:zaslavsky}
expresses magnitude in terms of the similarity matrix of a tope graph.
We use this description to prove positivity and a sharp lower bound
depending only on the number of elements.

\begin{lemma}\label{lem:variational-magnitude}
Let $K$ be a real positive definite matrix indexed by a nonempty
finite set $V$.  For $b\in\mathbb R^V$,
\begin{equation}\label{eq:variational-magnitude}
  b^{\mathsf T}K^{-1}b
    =\max_{a\ne0}\frac{(b^{\mathsf T}a)^2}{a^{\mathsf T}Ka}.
\end{equation}
If $\varnothing\ne W\subseteq V$, then
\[
  b^{\mathsf T}K^{-1}b\ge b_W^{\mathsf T}K_W^{-1}b_W,
\]
where $K_W$ is the principal submatrix and $b_W=b|_W$.
\end{lemma}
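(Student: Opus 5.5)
We prove the variational formula \eqref{eq:variational-magnitude} by a Cauchy--Schwarz argument in the inner product defined by $K$. Then we deduce the monotonicity statement by restricting the maximization to vectors supported on $W$.

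\emph{Step 1: the variational formula.}
Since $K$ is real symmetric positive definite, $\langle a,c\rangle_K=a^{\mathsf T}Kc$ is an inner product on $\R^V$. For $a\neq0$ write
\[
  b^{\mathsf T}a=(K^{-1}b)^{\mathsf T}Ka=\langle K^{-1}b,a\rangle_K .
\]
Cauchy--Schwarz then gives
\[
  (b^{\mathsf T}a)^2\le\langle K^{-1}b,K^{-1}b\rangle_K\,\langle a,a\rangle_K
  =(b^{\mathsf T}K^{-1}b)(a^{\mathsf T}Ka).
\]
Hence every quotient on the right of \eqref{eq:variational-magnitude} is at most $b^{\mathsf T}K^{-1}b$.

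For attainment, if $b\neq0$ take $a=K^{-1}b$, which is nonzero. The quotient is then $(b^{\mathsf T}K^{-1}b)^2/(b^{\mathsf T}K^{-1}b)=b^{\mathsf T}K^{-1}b$, where the denominator is positive because $K^{-1}$ is also positive definite. If $b=0$, both sides vanish for every $a$. So the supremum is a maximum and equals $b^{\mathsf T}K^{-1}b$.

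\emph{Step 2: restriction to $W$.}
The principal submatrix $K_W$ is positive definite, because $a_W^{\mathsf T}K_Wa_W$ is the value of the quadratic form of $K$ on the extension of $a_W$ by zero. It is therefore invertible, and Step~1 applies to $K_W$ and $b_W$.

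Let $a_W\ne0$ in $\R^W$ and let $a$ be its extension by zero to $V$. Then
\[
  b^{\mathsf T}a=b_W^{\mathsf T}a_W,\qquad a^{\mathsf T}Ka=a_W^{\mathsf T}K_Wa_W .
\]
So each quotient defining $b_W^{\mathsf T}K_W^{-1}b_W$ is also one of the quotients for $(K,b)$. A maximum over a larger set of admissible vectors is at least as large, which gives the inequality.

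The only point needing care is nonvanishing of denominators and attainment when $b=0$, which was handled in Step~1. Nothing else in the argument is delicate.
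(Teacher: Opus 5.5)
Your proof is correct and follows the paper's argument: Cauchy--Schwarz (you phrase it in the $K$-inner product rather than via $K^{\pm1/2}$, which is the same thing), attainment at $a=K^{-1}b$ with the case $b=0$ treated separately, and restriction of the maximum to vectors supported on $W$. Your explicit check that $K_W$ is positive definite, so that $K_W^{-1}$ exists and the first part applies to it, fills in a detail the paper leaves implicit.
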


\begin{proof}
Cauchy--Schwarz gives
\[
 (b^{\mathsf T}a)^2
 =\bigl((K^{-1/2}b)^{\mathsf T}K^{1/2}a\bigr)^2
 \le(b^{\mathsf T}K^{-1}b)(a^{\mathsf T}Ka).
\]
If $b\ne0$, equality is attained at $a=K^{-1}b$; if $b=0$,
both sides of \eqref{eq:variational-magnitude} vanish.  Restricting
the maximum to vectors supported on $W$ proves the inequality.
\end{proof}

With $b=\boldsymbol1$, Lemma~\ref{lem:variational-magnitude} says
that an isometric subspace gives a lower bound for magnitude whenever
the similarity matrix is positive definite.  For a simple oriented
matroid on $n\geq2$ elements, a shortest path from a tope to its
antipode and the opposite path form an isometric $2n$-cycle.  This
will provide a comparison depending only on $n$; the case $n=1$
will be handled separately.

\begin{theorem}\label{thm:orientable-bound}
Let $M$ be a simple orientable matroid on $n>0$ elements.  For
every real $q$ with $-1<q<1$,
\begin{equation}\label{eq:orientable-bound}
  \Mag(M;q)\ge
  \frac{2n(1-q)}{(1+q)(1-q^n)}>1.
\end{equation}
The bound is sharp for every $n$: equality holds for $U_{1,1}$
when $n=1$, and for $U_{2,n}$ when $n\ge2$.
\end{theorem}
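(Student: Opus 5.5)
Choose an orientation $\OM$ of $M$. By Theorem~\ref{thm:zaslavsky}, $\Mag(M;q)=\one^{\mathsf T}K_G(q)^{-1}\one$ with $G=G(\OM)$ the tope graph, at least for $q$ near $0$. For $q\in(-1,1)$ we then want to apply Lemma~\ref{lem:variational-magnitude} with $b=\one$ and $W$ the vertex set of an isometric $2n$-cycle.

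\textbf{Step 1: positive definiteness.} We first show that $K_G(q)$ is positive definite for $-1<q<1$. Since $G$ embeds isometrically in the cube $\{+,-\}^E$, we have $K_G(q)$ equal to the principal submatrix of the cube similarity matrix $\bigotimes_{e\in E}\begin{pmatrix}1&q\\ q&1\end{pmatrix}$. Each factor has eigenvalues $1\pm q>0$, so the tensor product is positive definite, and hence so is every principal submatrix. In particular $\det K_G(q)\ne0$ on $(-1,1)$. The rational function $\one^{\mathsf T}K_G(q)^{-1}\one$ is therefore regular there and agrees with $\Mag(M;q)$ as a rational function.

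\textbf{Step 2: the case $n\ge2$.} Pick a tope $B$ and a shortest path $B=B_0,\dots,B_n=-B$; it changes each coordinate once. Its negation $-B_0,\dots,-B_n$ is also a path of topes. Together they form a $2n$-cycle $C_{2n}$ whose vertices are distinct, because $n\ge2$ and the coordinate sets flipped differ. Distances in $G$ are $|S(\cdot,\cdot)|$, and along this cycle these equal the cycle distance. Indeed, $B_i$ and $B_j$ differ in $|i-j|$ coordinates, and $B_i$ and $-B_j$ differ in $n-|i-j|$ coordinates. So $W$ is isometric, and $K_W$ is the similarity matrix of $C_{2n}$.

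Because $C_{2n}$ is vertex-transitive, its weighting is constant. The magnitude is therefore $2n/\sum_{v}q^{d(v_0,v)}$. The sum is
\[
1+2(q+\dots+q^{n-1})+q^n=\frac{(1+q)(1-q^n)}{1-q},
\]
which gives $2n(1-q)/((1+q)(1-q^n))$. Lemma~\ref{lem:variational-magnitude} then yields the lower bound.

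\textbf{Step 3: the case $n=1$.} Here $M=U_{1,1}$ and $\Mag=2/(1+q)$. This is the same expression evaluated at $n=1$, so equality holds.

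\textbf{Step 4: the strict inequality $>1$.} We must show $2n(1-q)>(1+q)(1-q^n)$ for $q\in(-1,1)$. Write $(1-q^n)/(1-q)=\sum_{j<n}q^j$. It then suffices to show $(1+q)\sum_{j<n}q^j<2n$. The left side equals $1+2\sum_{0<j<n}q^j+q^n$. Each $|q^j|<1$, so this is at most $2n$, with strict inequality since $|q|<1$ and $n\ge1$ (for example, $q^n<1$).

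\textbf{Step 5: sharpness.} For $U_{2,n}$, any orientation is $n$ lines in the plane, and its tope graph is exactly $C_{2n}$. Hence $W=V$ and equality holds. Alternatively, one can compute from the flat recursion as in Example~\ref{ex:elementary-magnitudes}.

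The main obstacle is Step~1: magnitude is defined as a formal power series, so we must justify evaluating it via the matrix formula on all of $(-1,1)$. The cube tensor-product argument settles positive definiteness, and hence regularity, simultaneously.
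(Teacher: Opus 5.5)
Your proposal is correct and follows the paper's proof essentially step for step. Both use positive definiteness via the Hamming cube's tensor-product similarity matrix, the isometric $2n$-cycle built from a shortest path $B\to -B$ and its antipodal image, Lemma~\ref{lem:variational-magnitude} with $b=\one$, the constant-row-sum computation of $\Mag(C_{2n};q)$, and sharpness from $U_{2,n}$. Your explicit distance formula $d(B_i,-B_j)=n-|i-j|$ and your remark that positive definiteness makes $\one^{\mathsf T}K_G(q)^{-1}\one$ regular on $(-1,1)$ add a little detail that the paper leaves implicit.
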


\begin{proof}
Choose an orientation $\OM$.  The similarity matrix of the
Hamming cube on $E$ is
\[
  H_E(q)=\begin{pmatrix}1&q\\q&1\end{pmatrix}^{\otimes n}.
\]
Each factor has eigenvalues $1+q$ and $1-q$.  Thus $H_E(q)$ is
positive definite for $-1<q<1$.  Because $G(\OM)$ is isometrically
embedded in the cube, its similarity matrix is a principal
submatrix of $H_E(q)$ and is positive definite as well.

When $n=1$, the magnitude is $2/(1+q)$ and the claim follows.
Assume $n\ge2$, choose a tope $C_0$, and take a shortest path
\[
  C_0,C_1,\ldots,C_n=-C_0.
\]
Such a path has length $n$ and changes each coordinate exactly
once.  Concatenate it with its antipodal image to obtain
\[
  C_0,C_1,\ldots,C_n=-C_0,-C_1,\ldots,-C_{n-1},-C_n=C_0.
\]
If the first path changes coordinates in the order
$e_1,\ldots,e_n$, the concatenation changes them in the order
$e_1,\ldots,e_n,e_1,\ldots,e_n$.  Every shorter arc between two
positions has length at most $n$ and changes distinct
coordinates.  Its length therefore equals the Hamming distance
between its endpoints.  In particular, the $2n$ positions before
the repeated endpoint are distinct and form an isometric copy
of $C_{2n}$.

Apply Lemma~\ref{lem:variational-magnitude} with $b=\boldsymbol1$
and $W$ the vertices of this cycle.  Together with
Theorem~\ref{thm:zaslavsky}, it gives
\[
  \Mag(M;q)\ge\Mag(C_{2n};q).
\]
The similarity matrix of the cycle has constant row sum
\[
  s_n(q)=1+2\sum_{j=1}^{n-1}q^j+q^n
        =\frac{(1+q)(1-q^n)}{1-q}.
\]
Its weighting is the constant vector $s_n(q)^{-1}\boldsymbol1$,
so its magnitude is $2n/s_n(q)$.  Positive definiteness implies
$s_n(q)>0$, while $q^j<1$ for every $j>0$ implies $s_n(q)<2n$.
This proves \eqref{eq:orientable-bound}.  Finally, the tope graph
of $n$ distinct lines through the origin is $C_{2n}$ and the
underlying matroid is $U_{2,n}$, proving sharpness.
\end{proof}

\begin{corollary}\label{cor:no-real-zeros}
If $M$ is a nonempty simple orientable matroid, the numerator of
its reduced magnitude has no real root.  Moreover, $q=-1$ is a
pole of $\Mag(M;q)$.
\end{corollary}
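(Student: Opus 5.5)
The numerator statement follows directly from Theorem~\ref{thm:orientable-bound}; the pole statement needs a separate boundary argument at $q=-1$.

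\emph{No real roots.} Write $\Mag(M;q)=P_M(q)/Q_M(q)$ in lowest terms as in Theorem~\ref{thm:magnitude-properties}\ref{item:mag-palindromic}. Here $Q_M$ has only roots of unity other than $1$ as roots, so its only possible real root is $-1$. On $(-1,1)$, Theorem~\ref{thm:orientable-bound} gives $\Mag(M;q)>1$, so $P_M$ has no root there. At $q=1$, $\Mag(M;1)=1\neq0$ by Theorem~\ref{thm:magnitude-properties}\ref{item:mag-one}, and $Q_M(1)\ne0$, so $P_M(1)\ne0$. For $|q|>1$, the reciprocity $\Mag(M;q^{-1})=q^n\Mag(M;q)$ gives $\Mag(M;q)=q^{-n}\Mag(M;q^{-1})$ with $q^{-1}\in(-1,1)\setminus\{0\}$, which is nonzero. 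Equivalently, palindromicity of $P_M$ shows that its real roots are closed under $q\mapsto q^{-1}$. Finally, $q=-1$ cannot be a root of $P_M$. If it were, it would not be a root of $Q_M$ (lowest terms), so $\Mag$ would be regular at $-1$ with value $0$. That contradicts the lower bound below, which tends to $+\infty$.

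\emph{Pole at $-1$.} The bound in \eqref{eq:orientable-bound} is
\[
  \Mag(M;q)\ge\frac{2n(1-q)}{(1+q)(1-q^n)}.
\]
As $q\to-1^+$, the factor $(1-q)/(1-q^n)$ tends to $2/(1-(-1)^n)$.

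\emph{Odd $n$.} This limit is $1$, so the right side is asymptotic to $2n/(1+q)\to+\infty$. Hence $\Mag$ is unbounded near $-1$ and has a pole there.

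\emph{Even $n$.} Here $1-q^n$ also vanishes at $-1$, so the bound is of order $(1+q)^{-2}$ and again tends to $+\infty$. Concretely, $1-q^n=(1-q^2)(1+q^2+\dots+q^{n-2})$, so the right side equals
\[
  \frac{2n}{(1+q)^2\,(1+q^2+\cdots+q^{n-2})},
\]
whose last factor is positive at $-1$. Thus in both parity cases $\Mag(M;q)\to+\infty$ as $q\to-1^+$.

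The only point needing care is that a rational function unbounded near $-1$ has a pole at $-1$. This is immediate: if $\Mag$ were regular at $-1$, it would be bounded in a neighborhood of $-1$. The same unboundedness also rules out $P_M(-1)=0$ with cancellation, which completes the no-real-root claim. I expect no real obstacle; the only subtle point is handling $q=-1$ and $q=1$ separately from the open interval.
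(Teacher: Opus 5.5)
Your proof is correct and follows essentially the same route as the paper: the lower bound of Theorem~\ref{thm:orientable-bound} on $(-1,1)$, reciprocity for $|q|>1$, the value $\Mag(M;1)=1$, and divergence of the lower bound as $q\to-1^+$, which gives the pole and, by rationality, rules out a numerator root at $-1$. Your separate check for even $n$, where $1-q^n$ also vanishes at $-1$ and the bound grows like $(1+q)^{-2}$, fills in a detail the paper leaves implicit.
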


\begin{proof}
Theorem~\ref{thm:orientable-bound} excludes zeros on $(-1,1)$,
and symmetry excludes them for $|q|>1$.  The value at $q=1$
is $1$.  As $q\to-1$ from above, the lower bound in
\eqref{eq:orientable-bound} tends to $+\infty$.  Since magnitude
is rational, $-1$ must therefore be a pole.  These observations
exhaust the real line.
\end{proof}

\subsection{A negative value for a nonorientable matroid}

The proof of Theorem~\ref{thm:orientable-bound} uses orientability
essentially.  We now give an example showing that magnitude need not
be positive for a nonorientable matroid.  Binary projective geometries
are useful here because restrictions to flats and simplified
contractions depend only on rank.  Their flat recursion therefore
reduces to one value for each rank.

\begin{theorem}\label{thm:negative-projective}
The simple connected matroid $P_9=\operatorname{PG}(8,2)$ has
rank $9$ and $511$ elements, and
\[
  \Mag(P_9;9/10)<0.
\]
In particular, its magnitude has at least two real zeros in
$(0,1)$.
\end{theorem}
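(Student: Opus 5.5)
The plan is to exploit the homogeneity of $\operatorname{PG}(d-1,2)$ so that the flat recursion \eqref{eq:matroid-flat-recursion} collapses to a scalar recursion in the rank.

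\emph{Reduction to a rank recursion.} For $P_d=\operatorname{PG}(d-1,2)$, every flat $A$ of rank $k$ satisfies $P_d|A\cong P_k$ and $|A|=2^k-1$. The contraction $P_d/A$ is a loopless matroid whose simplification is $P_{d-k}$. Since $R$ depends only on the lattice of flats, $R(P_d/A)=R(P_{d-k})$. The number of rank-$k$ flats is the Gaussian binomial $\binom{d}{k}_2$. For $\operatorname{PG}(m-1,2)$ the characteristic polynomial is $\prod_{i=0}^{m-1}(x-2^i)$, so
\[
R(P_m)=\prod_{i=0}^{m-1}(1+2^i).
\]
Writing $m_k(q)=\Mag(P_k;q)$, \eqref{eq:matroid-flat-recursion} therefore becomes
\[
\bigl(1-(-1)^dq^{2^d-1}\bigr)m_d(q)=\sum_{k=0}^{d-1}(-1)^k\binom{d}{k}_2R(P_{d-k})\,q^{2^k-1}m_k(q),
\]
with $m_0=1$. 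Simplicity is already built in: $P_1=U_{1,1}$ gives $m_1=2/(1+q)$, and one checks $P_2=U_{2,3}$ against Example~\ref{ex:elementary-magnitudes}.

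\emph{Evaluation at $q=9/10$.} At $q=9/10$ every factor $1-(-1)^kq^{2^k-1}$ is nonzero, since $q^{2^k-1}<1$. So the recursion can be run in exact rational arithmetic for $d=1,\dots,9$, yielding exact values $m_d(9/10)$ and showing $m_9(9/10)<0$. I would present the values $m_d(9/10)$ for each $d$ in a short table, or at least the signs and magnitudes. The numbers are huge, of order $R(P_9)\approx 2^{36}$, with heavy cancellation, so the argument must rely on exact rational computation or rigorous interval bounds, not floating point. This verification is the main obstacle. It is a finite computation, but it needs to be presented in a checkable form, for instance by recording each $m_d(9/10)$ as a reduced fraction or with certified enclosures. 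If a purely human-readable bound is preferred, I would try to show that the $k=8$ and $k=9$ terms dominate with the correct sign, but I expect an explicit computation to be the honest route.

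\emph{Real zeros.} $\Mag(P_9;q)$ is rational and, by Theorem~\ref{thm:magnitude-properties}(\ref{item:mag-rational}), its poles lie among roots of $1\pm q^{j}$. These have modulus one, so the function is continuous on $[0,1]$. Its value at $0$ is $R(P_9)>0$ and its value at $1$ is $1$ by \eqref{eq:magnitude-one}. Since it is negative at $9/10$, the intermediate value theorem gives a zero in $(0,9/10)$ and another in $(9/10,1)$. Both are zeros of the reduced numerator, because the denominator has no roots in $(0,1)$.

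Finally, $P_9$ is simple, and it is connected because the projective geometry has a modular, irreducible flat lattice: any two points lie on a three-point line, which is a circuit. It has $2^9-1=511$ elements and rank $9$.
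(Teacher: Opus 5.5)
Your proposal follows the paper's proof step for step: you collapse \eqref{eq:matroid-flat-recursion} to the same rank recursion using Gaussian binomials and $R(P_r/A)=R_{r-k}$, evaluate it at $q=9/10$ for $r=1,\ldots,9$ (the paper records the outcome as $-228232<\Mag(P_9;9/10)<-228231$, and your warning about heavy cancellation is justified), and apply the intermediate value theorem between $\Mag(P_9;0)=R_9>0$ and $\Mag(P_9;1)=1$. One small correction: the denominator bound contains factors $1-q^{|A|}$ for even-rank flats, and these vanish at $q=1$, so continuity on $[0,1]$ does not follow from the poles having modulus one; it follows from the regularity at $q=1$ asserted in Theorem~\ref{thm:magnitude-properties}(\ref{item:mag-one}), which you already invoke for the value at $1$.
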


\begin{proof}
For $r\ge1$, let $P_r$ be the vector matroid on
$\mathbb F_2^r\setminus\{0\}$, and let $P_0$ be empty.  Put
$n_r=2^r-1$ and
\[
  R_r=\prod_{j=0}^{r-1}(1+2^j).
\]
The matroid $P_r$ is simple.  Recall that a matroid is connected
if and only if any two distinct elements lie in a common circuit.
For $r\ge2$, the circuit $\{u,v,u+v\}$ contains any chosen pair
$u,v$, so $P_r$ is connected.  Rank-$k$ flats are the sets of
nonzero vectors of $k$-dimensional subspaces; their number is the
Gaussian binomial coefficient
\[
  \genfrac{[}{]}{0pt}{}{r}{k}_2=\prod_{j=0}^{k-1}
                     \frac{2^{r-j}-1}{2^{k-j}-1}.
\]
Each such flat $A$ has size $n_k$ and restriction $P_k$.
The contraction $P_r/A$ is represented in the quotient vector
space, where each nonzero vector occurs $2^k$ times.  Its
simplification is $P_{r-k}$.

The characteristic polynomial of $P_m$ is
\begin{equation}\label{eq:projective-characteristic}
  \chi_{P_m}(x)=\prod_{j=0}^{m-1}(x-2^j).
\end{equation}
One way to verify this formula is to use the M\"obius function
of the subspace lattice, whose value on a $k$-dimensional
interval is $(-1)^k2^{\binom{k}{2}}$.  This value follows by
induction from the M\"obius recursion and the Gaussian binomial
identity.  Expanding the product in
\eqref{eq:projective-characteristic} gives exactly
\[
  \sum_{k=0}^m \genfrac{[}{]}{0pt}{}{m}{k}_2
        (-1)^k2^{\binom{k}{2}}x^{m-k}.
\]
The lattice of flats, and hence the characteristic polynomial,
is unchanged by simplification.  Thus $R(P_r/A)=R_{r-k}$.

Set $a_r(q)=\Mag(P_r;q)$.  Grouping flats by rank in
\eqref{eq:matroid-flat-recursion} gives
\begin{equation}\label{eq:projective-recursion}
  a_0(q)=1,\qquad
  a_r(q)=\frac{\displaystyle\sum_{k=0}^{r-1}
          (-1)^k\genfrac{[}{]}{0pt}{}{r}{k}_2R_{r-k}q^{n_k}a_k(q)}
               {1-(-1)^rq^{n_r}}.
\end{equation}
Evaluating \eqref{eq:projective-recursion} at $q=9/10$ successively
for $r=1,\ldots,9$ gives
\[
  -228232<a_9(9/10)<-228231.
\]
By Theorem~\ref{thm:magnitude-properties}, $a_9$ is regular on
$[0,1]$, has positive value $R_9$ at zero, and has value one at
$1$.  The intermediate value theorem produces a zero in each
of $(0,9/10)$ and $(9/10,1)$.
Finally, the negative value and simplicity contradict
Theorem~\ref{thm:orientable-bound} if $P_9$ admits an orientation.
Thus this calculation also proves that $P_9$ is nonorientable.
\end{proof}

\section{Orders of poles and signs of coefficients}\label{sec:poles}

We first determine the pole at $q=-1$ from the lattice of
flats, and identify its order with the degree of a natural function on
the topes.  We then construct a real arrangement whose magnitude at
$q=-t$ has a pole at $t=i$ of higher order than its pole at $t=1$.
This forces infinitely many negative coefficients and disproves
Koizumi--Liu's eventual sign alternation conjecture
\cite[Conjecture~7.1(3)]{KL}.

It is convenient to change signs in the variable and write
\[
 \Mag(M;q)=\sum_{\ell\geq0}c_\ell(M)q^\ell,
 \qquad
 A_M(t)=\Mag(M;-t)=\sum_{\ell\geq0}a_\ell(M)t^\ell.
\]
Thus $a_\ell(M)=(-1)^\ell c_\ell(M)$.  The eventual sign
alternation condition that we disprove is
$a_\ell(M)\geq0$ for every sufficiently large degree $\ell$.
Throughout this section, $d(M)$ denotes the
order of the pole of $A_M$ at $t=1$, with $d(M)=0$ if $A_M$ is regular
there.  Equivalently, this is the order of the pole of $\Mag(M;q)$ at
$q=-1$.  The point $q=1$ of the original magnitude is regular.
For a loopless matroid, if
$\nu=\operatorname{ord}_{t=1}A_M(t)$ is the signed order of
vanishing, then $d(M)=\max\{0,-\nu\}$.  Thus a zero has positive
signed order but pole order zero.

\subsection{The pole at \texorpdfstring{$t=1$}{t=1}}

For a loopless matroid $N$, let $\lambda(N)$ be the largest integer $p$
for which there is a chain of flats
\begin{equation}\label{eq:odd-flat-chain}
 \varnothing=F_0\subsetneq F_1\subsetneq\cdots\subsetneq F_p,
 \qquad r(F_j)=j,
 \qquad |F_j\setminus F_{j-1}|\ \text{odd for all }j.
\end{equation}
The chain may stop before the ground set; the empty chain is allowed.
The definition also applies to loopless matroids with parallel elements,
as will be needed for contractions.
For a simple orientable matroid, the proof below constructs topes
on which the signs in the blocks $F_j\setminus F_{j-1}$ can be reversed
independently.  A block of odd size $m_j$ contributes a factor
$1-t^{m_j}$ to the denominator of the magnitude of this subset.
Each such block contributes one to the pole order of this subset's magnitude at $t=1$.

\begin{theorem}\label{thm:pole-order}
Let $M$ be a simple orientable matroid, and put $p=\lambda(M)$.  Then
\begin{equation}\label{eq:pole-order-leading}
 d(M)=p,\qquad
 C_M:=\lim_{t\to1}(1-t)^p A_M(t)>0.
\end{equation}
\end{theorem}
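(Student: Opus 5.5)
We prove the upper bound $d(M)\le p$ and the lower bound with positive leading constant separately. Both steps use the flat recursion \eqref{eq:matroid-flat-recursion}, rewritten in the variable $t=-q$. The left-hand factor becomes $1-(-1)^{r+n}t^n$, which vanishes at $t=1$ exactly when $n\equiv r\pmod 2$ fails, that is, when $n-r$ is odd.

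\emph{Upper bound.} We argue by induction on $n$, proving the stronger statement $d(N)\le\lambda(N)$ for every loopless matroid $N$, allowing parallel elements. The induction should be run on the zeta recursion in its unreduced form. It is cleaner to unroll the recursion completely. Iterating Lemma~\ref{lem:zeta-flat-recursion} expresses $A_N(t)$ as a sum over chains of flats $\varnothing=G_0\subsetneq\cdots\subsetneq G_s$. Each chain contributes a constant times
\[
 \prod_i\bigl(1-\epsilon_i t^{|G_i|}\bigr)^{-1},
 \qquad \epsilon_i=(-1)^{r(G_i)+|G_i|}.
\]
This is just the geometric series of the repetitions of upper level sets. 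A factor has a pole at $t=1$ only when $|G_i|\equiv r(G_i)\pmod 2$.

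We then need a combinatorial lemma. If a chain has $k$ such flats $G_{i_1}\subsetneq\cdots\subsetneq G_{i_k}$, each with $|G|\equiv r(G)$ mod $2$, then $\lambda(N)\ge k$. To prove it, refine the chain by ranks. Each pair of consecutive "even-defect" flats has difference of even $|\cdot|-r$. A chain of flats of consecutive ranks joining them must then contain at least one step of odd block size. Otherwise all blocks would be odd, forcing size $\equiv$ rank. Still, this naive count is not quite enough, since we need $k$ odd steps at ranks $1,\dots,k$ starting from $\varnothing$. We will instead use an exchange argument. From $G_{i_j}$ with $|G|\equiv r(G)$, a greedy choice of rank-one extensions yields a maximal chain with at least $r(G_{i_j})$… Honestly, the first attempt would be the cleaner route below.

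\emph{Cleaner route via topes.} Use Theorem~\ref{thm:zaslavsky} and the similarity matrix $K(q)$ restricted to the sign cube. The pole at $q=-1$ arises from the kernel of the cube matrix $H_E(-1)$, which is concentrated on the top Fourier character $\prod_e B_e$. Expanding $\mathbf 1^{\mathsf T}K^{-1}\mathbf 1$ near $q=-1$ in the Fourier (Walsh) basis, the eigenvalue on a character $\chi_S$ is $(1-q)^{|S|}(1+q)^{n-|S|}$. The pole order is therefore governed by how well the parity function $\prod_e B_e$ restricted to topes can be approximated by low-degree Walsh functions. This links $d(M)$ to the Varchenko--Gelfand degree over $\R$ announced in Theorem~C. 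For the upper bound, we show the VG degree is at most $\lambda(M)$ by peeling off the rank-one flats in a non-odd chain. For the lower bound, we build topes realizing independent sign reversals on the blocks $F_j\setminus F_{j-1}$, as sketched before the theorem. This gives an isometric product sub-structure whose magnitude has the factor $\prod_j 2/(1-t^{m_j})$. Lemma~\ref{lem:variational-magnitude} then shows $\Mag(M;q)\ge c\,(1+q)^{-p}$ as $q\to-1^+$, which yields $d(M)\ge p$ and $C_M>0$.

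\emph{Main obstacle.} We expect the main obstacle to be the matching upper bound. It requires showing that the Rayleigh quotient cannot blow up faster than $(1+q)^{-p}$. We would try first to bound it by the real Varchenko--Gelfand degree of the parity function, and then prove that degree equals $\lambda(M)$ using the no-broken-circuit basis and an induction over contractions.
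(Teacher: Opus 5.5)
\paragraph{The lower bound is correct; the upper bound is not proved.} Your lower bound is the paper's argument. The topes $T_\sigma$ come from an odd-increment chain, $K_W$ has constant row sums $\prod_j(1-t^{m_j})$ at $q=-t$, positive definiteness is inherited from the cube, and Lemma~\ref{lem:variational-magnitude} then gives $A_M(t)\ge 2^p/\prod_j(1-t^{m_j})$, hence $d(M)\ge p$. But this only bounds $\liminf_{t\to1^-}(1-t)^pA_M(t)$ from below. For the limit $C_M$ to exist and be finite you need $d(M)\le p$, and neither of your routes proves that.

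\paragraph{Your first route rests on a false lemma.} Call a flat $G$ good if $|G|\equiv r(G)\pmod 2$. Note also that $1-(-1)^{r+n}t^n$ vanishes at $t=1$ when $n-r$ is \emph{even}, as your later formula says, not odd. Take the simple realizable matroid $U_{2,3}\oplus U_{2,3}$, with $a$ and $b$ in different summands.
\begin{itemize}
\item The chain $\{a\}\subsetneq\{a,b\}\subsetneq E$ consists of three good flats.
\item Its term in the unrolled recursion is a nonzero multiple of $t^3/((1-t)(1-t^2)(1-t^6))$.
\item Yet every rank-three flat has four elements, so no odd-increment chain reaches rank three and $\lambda=2$.
\item Indeed $A(t)=36/((1-t)^2(1-t+t^2)^2)$ has only a double pole.
\end{itemize}
So higher-order poles of individual chains cancel in the sum, and no chain-by-chain count can give the bound.

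\paragraph{Your second route leaves the bound as an admitted obstacle.} Nothing in it controls $\one^{\mathsf T}K^{-1}\one$, for a principal submatrix $K$ of the cube matrix, in terms of the Varchenko--Gelfand degree of $\eps$. The paper in fact derives Proposition~\ref{prop:parity-degree} from this theorem, not the other way round. Also, $H_E(-1)=\eps\eps^{\mathsf T}$ for the parity vector $\eps$ of the whole cube, so the top character spans its image, not its kernel.

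\paragraph{The missing idea is the parity constraint of Lemma~\ref{lem:pole-parity}.}
\begin{itemize}
\item Reciprocity $\Mag(N;q^{-1})=q^{|E(N)|}\Mag(N;q)$ gives $A_N(t^{-1})=(-1)^{|E(N)|}t^{|E(N)|}A_N(t)$. This forces $\operatorname{ord}_{t=1}A_N\equiv|E(N)|\pmod 2$.
\item Moreover $|E(N)|\equiv\lambda(N)\pmod 2$. If $H$ ends a maximal odd-increment chain, every parallel class of $N/H$ is even, so $|E(N)\setminus H|$ is even.
\end{itemize}
With this, a one-step induction on \eqref{eq:signed-flat-recursion} suffices. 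For proper flats $F$ you have $\lambda(N|F)\le\lambda(N)$, and the left factor has at most a simple zero at $t=1$, so $d(N)\le\lambda(N)+1$. Equality would give $\operatorname{ord}_{t=1}A_N=-\lambda(N)-1$, which has the wrong parity. This argument handles the cancellation globally, which your expansion over chains cannot do.
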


\begin{lemma}\label{lem:pole-parity}
For every loopless matroid $N$,
\[
 \lambda(N)\equiv\operatorname{ord}_{t=1}A_N(t)
 \equiv |E(N)|\pmod2,
\]
where a pole has negative order.
\end{lemma}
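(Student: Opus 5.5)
The lemma consists of two independent congruences, each compared with $n=|E(N)|$. The first is purely combinatorial, about maximal odd chains of flats. The second follows from the reciprocity law of Theorem~\ref{thm:magnitude-properties}\ref{item:mag-reciprocity}. If $E(N)=\varnothing$, all three quantities are $0$, so assume $N$ is nonempty and loopless.

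\emph{The combinatorial congruence.} I will show that every chain of the form \eqref{eq:odd-flat-chain} that cannot be extended has length $\equiv n\pmod 2$. Since a chain of maximum length $\lambda(N)$ cannot be extended, this gives $\lambda(N)\equiv n$.

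Let $\varnothing=F_0\subsetneq\cdots\subsetneq F_p$ be such a non-extendable chain. Then $|F_p|=\sum_j|F_j\setminus F_{j-1}|$ is a sum of $p$ odd numbers, so $|F_p|\equiv p$. It remains to prove that $|E\setminus F_p|$ is even.

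Suppose instead that $|E\setminus F_p|$ is odd; in particular $F_p\ne E$. The contraction $N/F_p$ is loopless because $F_p$ is a flat. Its ground set $E\setminus F_p$ is therefore partitioned into its rank-one flats, namely the parallel classes of $N/F_p$. These rank-one flats have the form $F\setminus F_p$, where $F$ ranges over the flats of $N$ with $F\supseteq F_p$ and $r(F)=p+1$. Since their sizes sum to an odd number, one of them, say $F\setminus F_p$, has odd size. Appending $F_{p+1}=F$ then extends the chain, a contradiction.

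\emph{The analytic congruence.} Substitute $q=-t$ into \eqref{eq:magnitude-reciprocity}. This gives the identity of rational functions
\[
 A_N(t^{-1})=(-1)^n t^n A_N(t).
\]
The function $A_N$ is not identically zero, since $A_N(0)=R(N)>0$. So I can write $A_N(t)=(1-t)^\nu g(t)$, where $\nu=\operatorname{ord}_{t=1}A_N$ and $g$ is regular and nonzero at $t=1$. Using
\[
 (1-t^{-1})^\nu=(-1)^\nu t^{-\nu}(1-t)^\nu,
\]
the identity becomes
\[
 (-1)^\nu t^{-\nu}g(t^{-1})=(-1)^n t^n g(t),
\]
after dividing by $(1-t)^\nu$, which is valid as an identity of rational functions. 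Evaluating at $t=1$ and cancelling $g(1)\neq0$ yields $(-1)^\nu=(-1)^n$, that is, $\nu\equiv n\pmod 2$.

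I expect no real obstacle in either part. The only point needing care is the first one: the rank-one flats of $N/F_p$ must partition its ground set, which requires $N/F_p$ to be loopless, and this holds precisely because $F_p$ is a flat.
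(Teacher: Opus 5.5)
Your proof is correct and follows essentially the same route as the paper: the first congruence comes from the fact that an odd-size parallel class of the contraction by the top flat of a maximal chain would extend that chain, and the second from comparing leading terms at $t=1$ in the reciprocity identity $A_N(t^{-1})=(-1)^n t^n A_N(t)$. Your version is slightly more explicit, since it argues for every non-extendable chain and checks that $N/F_p$ is loopless and that $A_N\not\equiv 0$, but the ideas coincide.
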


\begin{proof}
Let $H$ end a chain of maximum length in \eqref{eq:odd-flat-chain}.
Every parallel class of $N/H$ has even cardinality, since an odd class
would extend the chain.  Hence $|E(N)\setminus H|$ is even, whereas
$|H|\equiv\lambda(N)\pmod2$.
Put $\nu=\operatorname{ord}_{t=1}A_N(t)$ and write
\[
 A_N(t)=c(1-t)^\nu(1+O(1-t)),\qquad c\ne0.
\]
The symmetry identity
\[
 A_N(t^{-1})=(-1)^{|E(N)|}t^{|E(N)|}A_N(t),
\]
which follows from \eqref{eq:magnitude-reciprocity} with $q=-t$,
and $1-t^{-1}=-(1-t)/t$ give
$(-1)^\nu c=(-1)^{|E(N)|}c$.
Thus $\nu\equiv |E(N)|\pmod2$.
\end{proof}

\begin{proof}[Proof of Theorem~\ref{thm:pole-order}]
We first prove $d(N)\leq\lambda(N)$ for every loopless matroid $N$
by induction on $n=|E(N)|$.  The empty matroid is immediate.
For $n>0$, put $k=\lambda(N)$.  The flat recursion
\eqref{eq:matroid-flat-recursion} gives
\begin{equation}\label{eq:signed-flat-recursion}
 \bigl(1-(-1)^{r(N)+n}t^n\bigr)A_N(t)
 =\sum_{\substack{F\in\cF(N)\\F\subsetneq E(N)}}
   (-1)^{r(F)+|F|}R(N/F)t^{|F|}A_{N|F}(t).
\end{equation}
For every proper flat $F$, a chain of flats of $N|F$ is also a chain
of flats of $N$, so $\lambda(N|F)\leq k$.
Induction bounds the poles on the right by $k$, and the factor on
the left has at most a simple zero at $t=1$.  Thus $d(N)\leq k+1$.
If $d(N)=k+1$, then $d(N)>0$ and
$\operatorname{ord}_{t=1}A_N=-d(N)$, contradicting
Lemma~\ref{lem:pole-parity}.  This proves the upper bound.

Now choose an orientation $\OM$ of $M$ and a chain of length
$p=\lambda(M)$ in \eqref{eq:odd-flat-chain}.  The empty matroid is
immediate, so assume $M$ is nonempty and hence $p\geq1$.
Put $\Delta_j=F_j\setminus F_{j-1}$ and $m_j=|\Delta_j|$.
Choose covectors $X_j$ with $Z(X_j)=F_j$ for $0\leq j\leq p$.
Identifying signs with $0,1,-1$, for each
$\sigma\in\{1,-1\}^p$ define
\begin{equation}\label{eq:flag-topes}
 T_\sigma=X_p\circ(\sigma_pX_{p-1})\circ\cdots\circ(\sigma_1X_0).
\end{equation}
Closure under composition and sign reversal makes $T_\sigma$ a
covector.  It has no zero coordinates, so it is a tope.
Its signs outside $F_p$ are fixed, and on $\Delta_j$ they are those
of $\sigma_jX_{j-1}$.  Thus these are $2^p$ distinct topes and
\[
 d(T_\sigma,T_\tau)=\sum_{j:\sigma_j\ne\tau_j}m_j.
\]
Let $W=\{T_\sigma:\sigma\in\{1,-1\}^p\}$.

For $0<t<1$, the matrix $K=K_{G(\OM)}(-t)$ is positive definite:
as in the proof of Theorem~\ref{thm:orientable-bound}, it is a principal
submatrix of the positive definite similarity matrix of the Hamming cube.
Since every $m_j$ is odd, each row of $K_W$ has sum
\[
 \prod_{j=1}^p\bigl(1+(-t)^{m_j}\bigr)
 =\prod_{j=1}^p(1-t^{m_j}).
\]
Theorem~\ref{thm:zaslavsky} and Lemma~\ref{lem:variational-magnitude},
applied with $b=\one$, therefore give
\[
 A_M(t)=\one^{\mathsf T}K^{-1}\one
 \geq\one_W^{\mathsf T}K_W^{-1}\one_W
 =\frac{2^p}{\prod_{j=1}^p(1-t^{m_j})}.
\]
The upper bound $d(M)\leq p$ ensures that the limit defining $C_M$
exists.  Multiplying this inequality by $(1-t)^p$ and letting
$t\to1^-$ yields
\[
 C_M\geq\frac{2^p}{m_1\cdots m_p}>0.
\]
Consequently $d(M)=p$, as required.
\end{proof}

\subsection{The degree of the parity function}

Choose an orientation $\OM$ of $M$, and identify its tope signs with
$\{1,-1\}$.  The function
\[
 \eps(T)=\prod_{e\in E}T_e
\]
records the bipartition of the tope graph, up to an overall sign.
Use real coefficients for the Varchenko--Gelfand filtration in this
subsection, and denote its terms by
\[
 P^{\mathbb R}_{\leq k}
 =\operatorname{span}_{\mathbb R}
   \left\{T\longmapsto\prod_{e\in S}T_e:|S|\leq k\right\}.
\]
This agrees with the filtration by Heaviside degree because
$T_e=2h_e(T)-1$.  Although the displayed expression for $\eps$ has
degree $|E|$, relations among the sign functions can substantially
reduce its degree on the tope set.
There is also a direct reason for this function to appear in the pole
problem: since $(-1)^{d(B,C)}=\eps(B)\eps(C)$, conjugating the similarity
matrix by the diagonal matrix of $\eps$ gives
\[
 A_M(t)=\eps^{\mathsf T}K_{G(\OM)}(t)^{-1}\eps.
\]
Thus the pole records the degeneration of the inverse similarity form
in the parity direction.  The next result expresses its order in terms
of polynomial degree.

\begin{proposition}\label{prop:parity-degree}
For every orientation $\OM$ of a simple matroid $M$,
\[
 \min\{k:\eps\in P^{\mathbb R}_{\leq k}\}=\lambda(M)=d(M).
\]
In particular, this minimum degree is independent of the orientation.
\end{proposition}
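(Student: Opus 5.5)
We prove the two inequalities $\min\{k:\eps\in P^{\mathbb R}_{\leq k}\}\geq\lambda(M)$ and $\min\{k:\eps\in P^{\mathbb R}_{\leq k}\}\leq d(M)$ separately. Theorem~\ref{thm:pole-order} gives $d(M)=\lambda(M)$, so the two inequalities together prove the proposition. Write $k_0$ for the minimal degree of $\eps$, and $\chi_S(T)=\prod_{e\in S}T_e$ for the monomial sign functions on topes.

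\emph{Lower bound $k_0\geq\lambda(M)$.} We reuse the topes $T_\sigma$ of \eqref{eq:flag-topes}, built from a chain of length $p=\lambda(M)$ with odd blocks $\Delta_j$. On $W=\{T_\sigma\}$ the coordinate $T_e$ is constant for $e\notin F_p$. For $e\in\Delta_j$ it equals $\sigma_j$ times the fixed sign $(X_{j-1})_e$. Hence the restriction of any $\chi_S$ to $W$ is $\pm\prod_{j:\,|S\cap\Delta_j|\text{ odd}}\sigma_j$, which is a character of $\{1,-1\}^p$ of degree at most $|S|$. Consequently every element of $P^{\mathbb R}_{\leq k}$ restricts to a linear combination of characters of degree $\leq k$. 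On the other hand, $\eps|_W=\pm\prod_j\sigma_j^{m_j}=\pm\prod_{j=1}^p\sigma_j$ because every $m_j$ is odd. This is the top character, and characters are linearly independent. Therefore $k_0\geq p$.

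\emph{Upper bound $k_0\leq d(M)$.} We show that the pole order is at least the degree, using the variational formula. Fix an orientation and $0<t<1$, and put $s=1-t$. Conjugating by $\operatorname{diag}(\eps)$ gives $A_M(t)=\eps^{\mathsf T}K(t)^{-1}\eps$ with $K(t)=K_{G(\OM)}(t)$, which is positive definite. Since $d(B,C)=\sum_e(1-B_eC_e)/2$, we have
\[
 t^{d(B,C)}=\prod_{e\in E}\Bigl(\tfrac{1+t}{2}+\tfrac{1-t}{2}B_eC_e\Bigr)
 =\sum_{S\subseteq E}\Bigl(\tfrac{1+t}{2}\Bigr)^{n-|S|}\Bigl(\tfrac{s}{2}\Bigr)^{|S|}\chi_S(B)\chi_S(C).
\]
Hence $a^{\mathsf T}K(t)a=\sum_S(\tfrac{1+t}{2})^{n-|S|}(\tfrac s2)^{|S|}\langle\chi_S,a\rangle^2$, where $\langle\cdot,\cdot\rangle$ is the standard pairing on tope functions. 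Since $\eps\notin P^{\mathbb R}_{\leq k_0-1}$, there is a vector $a$ orthogonal to all $\chi_S$ with $|S|<k_0$ and with $\langle\eps,a\rangle\neq0$. For this fixed $a$ we get $a^{\mathsf T}K(t)a=O(s^{k_0})$ as $s\to0^+$. Lemma~\ref{lem:variational-magnitude} then yields $A_M(t)\geq\langle\eps,a\rangle^2/(a^{\mathsf T}K(t)a)\geq c\,s^{-k_0}$ for some $c>0$. So $A_M$ has a pole of order at least $k_0$ at $t=1$, that is, $d(M)\geq k_0$.

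Combining the two bounds gives $\lambda(M)\leq k_0\leq d(M)=\lambda(M)$. The middle quantity is therefore independent of the orientation, since $\lambda(M)$ depends only on $M$.

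I expected the upper bound to be the main obstacle. A direct combinatorial attempt would try to reduce $\prod_{e\notin H}T_e$, with $H$ ending a maximal odd chain, using circuit relations, and this looks awkward. The expansion of $t^{d}$ into characters avoids that and turns the pole computation into a statement about orthogonality to low-degree characters. The one point to check is the existence of $a$, which is just the orthogonal decomposition $\R^{\cT}=P^{\mathbb R}_{\leq k_0-1}\oplus(P^{\mathbb R}_{\leq k_0-1})^\perp$.
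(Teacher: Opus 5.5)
Your proof is correct. Your lower bound is the paper's easy half in different language. The paper pairs $\eps$ with the alternating sum $\gamma_{\mathscr F,T,p}$ over the $2^p$ block reversals. That is the same as your observation that $\eps|_W$ is the top character of $\{1,-1\}^p$, while monomials of degree less than $p$ restrict to lower characters.

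The upper bound is where you genuinely diverge. The paper gets $\eps\in P^{\mathbb R}_{\le\lambda(M)}$ combinatorially: it cites \cite[Proposition~3.9]{SY} for the fact that these alternating sums span the whole annihilator $(P^{\mathbb R}_{\le p-1})^\perp$. It then uses Theorem~\ref{thm:pole-order} only for the final equality with $d(M)$. You instead prove $k_0\le d(M)$ analytically, from the character expansion of $t^{d(B,C)}$ and Lemma~\ref{lem:variational-magnitude}. You then import the inductive bound $d(M)\le\lambda(M)$ from Theorem~\ref{thm:pole-order}. There is no circularity, since that theorem does not use the proposition.

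Your route avoids the external spanning theorem. It also turns the paper's heuristic remark, that the pole records the degeneration of $K(t)^{-1}$ in the parity direction, into an actual argument. With one extra line it gives more. Write $\eps=\sum_{|S|\le k_0}b_S\chi_S$ on the topes and apply Cauchy--Schwarz to the expansion of $a^{\mathsf T}K(t)a$; this gives $\langle\eps,a\rangle^2\le 2^n(1-t)^{-k_0}\bigl(\sum_S b_S^2\bigr)\,a^{\mathsf T}K(t)a$. Hence $k_0=d(M)$ for every orientation directly, and orientation independence follows from Theorem~\ref{thm:zaslavsky} alone.

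The paper's route, in exchange, establishes $k_0=\lambda(M)$ purely algebraically, without any input from magnitude or pole orders.
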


\begin{proof}
Equip functions on the topes and formal real linear combinations of
topes with the evaluation pairing
\[
 \left\langle f,\sum_T a_T[T]\right\rangle=\sum_Ta_Tf(T).
\]
For $p\geq1$, define the full annihilator
\[
 Q_p=(P^{\mathbb R}_{\leq p-1})^\perp
 =\{z\in\mathbb R[\cT(\OM)]:
       \langle f,z\rangle=0\text{ for all }f\in P^{\mathbb R}_{\leq p-1}\}.
\]
For a complete flag
$\mathscr F:\varnothing=F_0\subsetneq\cdots\subsetneq F_r=E$,
put $\Delta_j=F_j\setminus F_{j-1}$.
There is a tope $T$ such that independently reversing the signs on
any of the blocks $\Delta_j$ again gives a tope, by the construction
in \eqref{eq:flag-topes}.  For $1\leq p\leq r$, if $T^J$ reverses
the blocks indexed by $J$, form the alternating sum
\begin{equation}\label{eq:prefix-chain}
 \gamma_{\mathscr F,T,p}
 =\sum_{J\subseteq\{1,\ldots,p\}}(-1)^{|J|}[T^J].
\end{equation}
A monomial of degree less than $p$ omits at least one of the first
$p$ blocks.  Its evaluations cancel in pairs under reversal of
that block, so \eqref{eq:prefix-chain} belongs to $Q_p$.
The converse spanning statement is
\cite[Proposition~3.9]{SY}, after extending scalars from $\Z$ to
$\mathbb R$: as the flags and compatible topes vary, these elements
span all of $Q_p$.  The Varchenko--Gelfand filtration is exhausted in
degree $r$ (Section~\ref{sec:preliminaries}; see also
\cite[Definition~3.3]{SY}), so $Q_{r+1}=0$.
Evaluation gives
\[
 \langle\eps,\gamma_{\mathscr F,T,p}\rangle
 =\eps(T)\prod_{j=1}^p\bigl(1-(-1)^{|\Delta_j|}\bigr).
\]
This is nonzero precisely when the first $p$ increments have odd
cardinality.  Thus
$\eps$ is nonzero on $Q_p$ precisely for $1\leq p\leq\lambda(M)$.
By the nondegeneracy of the evaluation pairing,
$\eps\in P^{\mathbb R}_{\leq k}$ if and only if it vanishes on
$Q_{k+1}$.  Its minimum degree is therefore $\lambda(M)$.
Theorem~\ref{thm:pole-order} proves the remaining equality.
\end{proof}

In Section~\ref{sec:filtration}, we use the Varchenko--Gelfand filtration
to recover the motivic zeta function from magnitude cohomology.

\begin{example}\label{ex:uniform-pole}
For the simple uniform matroid $U_{r,n}$, the first $r-1$ increments
of a complete flag are singletons, and the last has cardinality
$n-r+1$.  Hence
\[
 d(U_{r,n})=
 \begin{cases}
 r,&n-r\text{ is even},\\
 r-1,&n-r\text{ is odd}.
 \end{cases}
\]
For example, $d(U_{3,5})=3$, whereas $d(U_{3,4})=2$.
Multiplicativity of magnitude also gives
$\lambda(M_1\oplus M_2)=\lambda(M_1)+\lambda(M_2)$ and
$C_{M_1\oplus M_2}=C_{M_1}C_{M_2}$ for simple orientable matroids.
\end{example}

\subsection{Failure of eventual sign alternation}

We now examine the relation between the poles of magnitude and the
signs of its coefficients.
Let $A(t)=\sum_{\ell\geq0}a_\ell t^\ell$ be a real rational function
with no poles in $|t|<1$.  If $a_\ell\geq0$ for all sufficiently large
$\ell$, then for $|\zeta|=1$ and $0<s<1$,
\begin{equation}\label{eq:eventual-nonnegative-bound}
 |A(s\zeta)|\leq\sum_{\ell\geq0}|a_\ell|s^\ell=A(s)+O(1)
 \qquad(s\to1^-),
\end{equation}
since only finitely many coefficients are negative.
Comparing pole orders shows that every pole on the unit circle has
order at most that at $t=1$, with order zero if $A$ is regular there.
Thus a pole of higher order elsewhere on the unit circle obstructs
eventual nonnegativity.

\begin{theorem}\label{thm:eventual-alternation-counterexample}
There exists a connected simple real-representable matroid $M$ of
rank six on twelve elements such that $a_\ell(M)<0$ for infinitely
many $\ell$.
\end{theorem}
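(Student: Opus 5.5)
The plan follows the obstruction recorded in \eqref{eq:eventual-nonnegative-bound}. If $a_\ell(M)\geq0$ for all large $\ell$, then every pole of $A_M(t)$ on the unit circle has order at most $d(M)$. So it suffices to exhibit a connected simple real-representable $M$ of rank six on twelve elements for which $A_M$ has a pole at some $\zeta\neq1$ with $|\zeta|=1$ of order exceeding $d(M)$. By Theorem~\ref{thm:magnitude-properties}, $A_M$ has no poles in $|t|<1$, so the obstruction applies. Following the introduction, the target is $d(M)=4$ together with a pole of order $5$ at $t=i$, equivalently at $q=-i$ (and hence also at $q=i$, by the reality of the coefficients).

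The construction is guided by the denominator bound \eqref{eq:flat-denominator}. The factor $1-(-1)^{r(A)}q^{|A|}$ at $q=-t$ becomes $1-(-1)^{r(A)+|A|}t^{|A|}$. This factor vanishes at $t=i$ exactly when $4\mid |A|$ and $r(A)\equiv0\pmod 2$, or when $|A|\equiv2\pmod4$ and $r(A)$ is odd. To get a pole of order $5$ at $t=i$, we want a chain of flats in which five nested flats contribute such factors. The analogue of $\lambda$ is natural here: a chain $F_1\subsetneq\cdots$ whose increments have size $\equiv2\pmod4$, each of rank one. Each increment then contributes a factor $1+t^{2}$ or similar.

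Concretely, I would take six generic vectors in $\R^6$ and double some of them into nearby non-parallel pairs lying in suitable planes. The twelve elements are arranged so that there is a flag of flats with rank-one increments of size $2$, i.e. pairs, while any chain with odd increments has length at most $4$. I would then check that $\lambda(M)=4$ and invoke Theorem~\ref{thm:pole-order} for $d(M)=4$. Simplicity forces the rank-one flats to be singletons, so the increments must be arranged more cleverly, for example rank-$j$ flats of sizes $1,2,4,\dots$. A small computer search over realizable configurations, using the flat recursion \eqref{eq:signed-flat-recursion}, is the practical way to find $M$.

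For the pole at $t=i$, I would compute $A_M$ exactly via the recursion \eqref{eq:matroid-flat-recursion}, which depends only on the lattice of flats together with the sizes of the flats and the values $R(M/A)$. From this I would read off $\operatorname{ord}_{t=i}A_M=-5$. This step is the main obstacle. The denominator bound gives only an upper bound, so cancellation must be ruled out, and no positivity argument like the one in Theorem~\ref{thm:pole-order} is available at $t=i$. I would first try to prove the pole order by computing the Laurent leading coefficient through the recursion. The numerator terms with the maximal pole order come only from restrictions along the chosen chain, so the leading coefficient is an explicit product that can be checked to be nonzero. If that fails, the fallback is an exact rational computation of $A_M$ that is reported and verified. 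Finally, connectivity and the realization can be checked directly from the chosen vectors.
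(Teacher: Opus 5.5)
Your reduction is correct. By \eqref{eq:eventual-nonnegative-bound} and the absence of poles in $|t|<1$, it suffices to exhibit $M$ for which $A_M$ has a pole at $t=i$ of order exceeding $d(M)=\lambda(M)$. Your criterion for $1-(-1)^{r(A)+|A|}t^{|A|}$ to vanish at $i$ is also right; equivalently, $|A|$ is even and $r(A)+|A|/2$ is even. But for an existence theorem the construction is the proof, and you never produce $M$. Deferring to a computer search leaves the statement unproved.

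Your design hints also do not converge on an example. Rank-one increments of size $\equiv2\pmod4$ preserve the criterion. However, the chain must start at a nonempty flat that satisfies it and whose restriction already has a pole at $i$. Your suggested sizes $1,2,4$ in ranks $1,2,3$ all violate your own criterion. The paper's base is a rank-two flat $L\cong U_{2,4}$, with $A_{U_{2,4}}(t)=8(1+t)/((1-t)(1-t^4))$. It adds pairs $a_j$, $b_j=a_j+u_j$ with $u_j\in L$, so that the flats containing $L$ are exactly the $F_J$ with $r(F_J)=|J|+2$ and $|F_J|=2|J|+4$. It then classifies the flats not containing $L$ as $B_k$ or $U_{2,3}\oplus B_k$. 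This shows two things: every rank-five flat has even cardinality, whence $d(M)=\lambda(M)\leq4$; and all restrictions to flats not containing $L$ are regular at $i$.

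Your plan for the lower bound at $i$ also has a flaw: the top-order terms do not come from a single chosen chain. Above $L$ the flats form a Boolean lattice. The recursion \eqref{eq:signed-flat-recursion} for $M|F_J$ receives top-order contributions from every $F_{J\setminus\{j\}}$, so cancellation is a real possibility that a one-chain product does not address. The paper rules it out by a phase computation. With $m=|J|$, at $t=is$ the left factor is $1-s^{2m+4}$, one has $(-1)^{r(F_K)+|F_K|}(is)^{|F_K|}=s^{|F_K|}>0$ for every $K\subseteq J$, and $R\bigl((M|F_J)/F_{J\setminus\{j\}}\bigr)=2$. Hence the leading coefficients satisfy $(2m+4)L_J=2\sum_{j\in J}L_{J\setminus\{j\}}$ with $L_\varnothing=2i$. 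Every $L_J$ is therefore a positive multiple of $i$, giving a pole of order five at $i$ against $d(M)\leq4$. Some such same-phase argument, or an exact computation for a specified $M$, is needed. As written, neither the example nor the order-five pole is established.
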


\begin{proof}
The construction starts with a rank-two flat $L$ whose magnitude
has a pole at $t=i$.  We add four pairs of vectors so that the
flats containing $L$ increase this pole order one step at a time.
At the same time, every rank-five flat will have even cardinality.
This prevents an odd-increment chain from reaching rank five and
bounds the pole order at $t=1$ by four.

Let $e_1,\ldots,e_6$ be the standard basis of $\mathbb R^6$, and set
\[
 \begin{gathered}
 u_1=e_1,\quad u_2=e_2,\quad
 u_3=e_1+e_2,\quad u_4=e_1-e_2,\\
 a_j=e_{j+2},\quad b_j=a_j+u_j\quad(1\leq j\leq4).
 \end{gathered}
\]
Let $M$ be the matroid represented by these twelve vectors, and put
\[
 L=\{u_1,u_2,u_3,u_4\},\qquad
 E=L\cup\bigcup_{j=1}^4\{a_j,b_j\}.
\]
The vectors $u_1,u_2,a_1,\ldots,a_4$ form a basis and no two vectors
are proportional, so $M$ is simple of rank six.  Every triple in $L$
and each $\{u_j,a_j,b_j\}$ is a circuit.  Elements in a common circuit
belong to the same connected component.  These circuits join all
elements to the same connected component, so $M$ is connected.

The flats containing $L$ are exactly
\begin{equation}\label{eq:counterexample-flats}
 F_J=L\cup\bigcup_{j\in J}\{a_j,b_j\},
 \qquad J\subseteq\{1,2,3,4\},
\end{equation}
with $r(F_J)=|J|+2$ and $|F_J|=2|J|+4$.
The full classification needed for the two pole estimates is
summarized below, where $B_k=U_{1,1}^{\oplus k}$ and a triangle is
a three-element circuit.
\begin{center}\small
\begin{tabular}{@{}llll@{}}
\toprule
Flat & Restriction & Rank & Cardinality \\
\midrule
$F_J\supseteq L$ & $M|F_J$ & $|J|+2$ & $2|J|+4$ \\
Independent, $L\nsubseteq F$
 & $B_k$, $0\leq k\leq4$ & $k$ & $k$ \\
Contains a triangle, $L\nsubseteq F$
 & $U_{2,3}\oplus B_k$, $0\leq k\leq3$ & $k+2$ & $k+3$ \\
\bottomrule
\end{tabular}
\end{center}
The first row will produce the pole at $i$; the other two rows
give restrictions that are regular there.

To prove the classification, first consider a flat containing $L$.
Modulo $\operatorname{span}(L)$ the pairs have independent
images, and a flat containing $L$ contains either both members of a
pair or neither.
Now let $F$ be a flat not containing $L$.
It contains at most one $u_j$, since any two span $L$.
Because $\{u_j,a_j,b_j\}$ is a circuit and $F$ is a flat,
containing two of these elements implies containing all three.
Thus $F$ contains at most one complete pair $\{a_j,b_j\}$.
If it contains no complete pair, then $u_j\in F$ excludes both
$a_j$ and $b_j$, so $F$ is independent of size at most four.
Otherwise $F$ consists of one triangle $\{u_j,a_j,b_j\}$ and at
most one element from each other pair.  The independent coordinates
$e_3,\ldots,e_6$ show that these remaining elements are coloops
of $M|F$, proving the claim.
In particular, every rank-five flat has size ten or six, and hence
even cardinality.  A chain in \eqref{eq:odd-flat-chain} of length at
least five would have an odd-cardinality rank-five member, so
Theorem~\ref{thm:pole-order} gives $d(M)=\lambda(M)\leq4$.

We show that $A_M$ has a pole of order five at $i$.
The formulas
\[
 A_{B_k}(t)=\frac{2^k}{(1-t)^k},\qquad
 A_{U_{2,3}}(t)=\frac6{(1-t)(1-t+t^2)}
\]
show that all restrictions to flats not containing $L$ are regular
at $i$.  For $m=|J|$, we prove that the limit
\begin{equation}\label{eq:counterexample-leading-limit}
 L_J=\lim_{s\to1^-}(1-s)^{m+1}A_{M|F_J}(is)
\end{equation}
exists and is a positive multiple of $i$.
Hence $A_{M|F_J}$ has a pole of order $m+1$ at $i$.
For $J=\varnothing$, the restriction is $U_{2,4}$, and
\[
 A_{U_{2,4}}(t)=\frac{8(1+t)}{(1-t)(1-t^4)}
\]
gives $L_\varnothing=2i$.
For $m>0$, apply \eqref{eq:signed-flat-recursion} to $M|F_J$ at
$t=is$.  The factor on the left is $1-s^{2m+4}$, with a simple zero
at $s=1$.  All proper-flat terms have poles of order at most $m$
by induction, so $A_{M|F_J}$ has a pole of order at most $m+1$.
The pole bound therefore ensures that the limit $L_J$ in
\eqref{eq:counterexample-leading-limit} exists.
Only $F_{J\setminus\{j\}}$, $j\in J$, can contribute at order $m$
on the right.  Their contractions have rank one, hence $R=2$, and
for every $K\subseteq J$,
\[
 (-1)^{r(F_K)+|F_K|}(is)^{|F_K|}=s^{|F_K|}.
\]
Multiplying the recursion by $(1-s)^m$ and taking the limit gives
\begin{equation}\label{eq:counterexample-leading-recursion}
 (2m+4)L_J=2\sum_{j\in J}L_{J\setminus\{j\}},
 \qquad L_\varnothing=2i.
\end{equation}
All coefficients in this recursion are positive real numbers.
Induction therefore gives $L_J=i\beta_J$ with $\beta_J>0$:
the leading terms have the same phase and cannot cancel.
Taking $J=\{1,2,3,4\}$ gives a pole of
order five at $i$, whereas the pole at $1$ has order at most four.
Theorem~\ref{thm:magnitude-properties} ensures that $A_M$ has no poles
in the open unit disk, so \eqref{eq:eventual-nonnegative-bound}
excludes eventual nonnegativity of its coefficients.
\end{proof}

\begin{corollary}\label{cor:KL-counterexample}
The coefficients of the magnitude of a real central hyperplane
arrangement need not eventually alternate.  In particular,
\cite[Conjecture~7.1(3)]{KL} is false.
\end{corollary}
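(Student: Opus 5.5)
This follows from Theorem~\ref{thm:eventual-alternation-counterexample} once we pass from the matroid to an arrangement. Let $M$ be the rank-six matroid constructed there, represented by the twelve vectors $u_1,\dots,u_4,a_1,\dots,a_4,b_1,\dots,b_4\in\mathbb R^6$.

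\emph{Step 1: the arrangement.} Let $\mathcal A$ be the real central arrangement of the twelve hyperplanes $H_v=\{x:\langle v,x\rangle=0\}$, one for each of these vectors $v$. Its associated oriented matroid $\OM$ has covectors equal to the sign vectors of points of $\mathbb R^6$, so its underlying matroid is $M$. No two of the vectors are proportional, so the hyperplanes are distinct and $\OM$ is simple. The tope graph $G(\OM)$ is then the chamber graph of $\mathcal A$, with distance equal to the number of separating hyperplanes, as in \eqref{eq:tope-distance}. This is exactly the graph whose magnitude Koizumi--Liu attach to $\mathcal A$.

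\emph{Step 2: identify the magnitudes.} By Theorem~\ref{thm:zaslavsky} we have
\[
\Mag(\mathcal A;q)=\Mag(G(\OM);q)=Z_M(-1,q)=\Mag(M;q),
\]
as formal power series. Substituting $q=-t$ turns the coefficients of $\Mag(\mathcal A;-t)$ into the numbers $a_\ell(M)=(-1)^\ell c_\ell(M)$.

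\emph{Step 3: conclude.} Theorem~\ref{thm:eventual-alternation-counterexample} gives $a_\ell(M)<0$ for infinitely many $\ell$. Hence $(-1)^\ell[q^\ell]\Mag(\mathcal A;q)<0$ infinitely often, so the coefficients of $\Mag(\mathcal A;q)$ do not eventually alternate in sign. Conjecture~7.1(3) of \cite{KL} asserts that they eventually alternate for every real central arrangement, so $\mathcal A$ is a counterexample.

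The argument has no real obstacle. The one point to check is that Koizumi--Liu's definition of arrangement magnitude agrees with the tope-graph magnitude used here, including the sign convention of their conjecture. If their arrangements are allowed to be non-essential, this makes no difference, since magnitude depends only on the chamber graph.
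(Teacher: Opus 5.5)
Your proposal is correct and follows the paper's proof: take the twelve vectors as hyperplane normals, use Theorem~\ref{thm:zaslavsky} to identify $\Mag(\mathcal A;q)$ with $\Mag(M;q)$, and apply Theorem~\ref{thm:eventual-alternation-counterexample}. The paper adds one step: the Euler characteristic identity \eqref{eq:magnitude-euler} writes the coefficient of $q^\ell$ as $\chi_\ell(\mathcal A)=\sum_k(-1)^k\operatorname{rank}\MH_{k,\ell}(\mathcal A)$, which is how it settles the normalization point you flagged for Koizumi--Liu's conjecture.
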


\begin{proof}
Take the twelve representing vectors in
Theorem~\ref{thm:eventual-alternation-counterexample} as normals to
hyperplanes in $\mathbb R^6$.  They define a real central arrangement
$\mathcal A$ with underlying matroid $M$.  Its magnitude equals
$\Mag(M;q)$ by Theorem~\ref{thm:zaslavsky}.  The
magnitude Euler characteristic identity \eqref{eq:magnitude-euler}
identifies its coefficient of $q^\ell$ with
\[
 \chi_\ell(\mathcal A)
 =\sum_k(-1)^k\operatorname{rank}\MH_{k,\ell}(\mathcal A).
\]
Consequently $(-1)^\ell\chi_\ell(\mathcal A)=a_\ell(M)$, which is
negative for infinitely many $\ell$ by
Theorem~\ref{thm:eventual-alternation-counterexample}.
\end{proof}

\section{Magnitude homology and cohomology of oriented matroids}
\label{sec:homology}

Throughout this section, $\OM$ is a finite simple oriented matroid, $M$ is
its underlying matroid, and $G$ is its tope graph.  Homology has integral
coefficients until the discussion of cup products.  For a tope $C$ and a
subset $F\subseteq E$, write $C\setminus F$ for the sign vector obtained
by replacing the coordinates in $F$ by zero.  Write $\eps_F C$ for the
sign vector obtained by reversing the signs in $F$, and set
\[
 \eps_w C=\eps_{\{e:w_e\text{ is odd}\}}C.
\]
A chain of crossing vector $w$ and terminal tope $C$ necessarily starts
at $\eps_w C$.  In particular, its initial tope is determined by these
two data.

\subsection{The homology calculation}

A \emph{face flag incident to $C$} is a possibly empty weakly decreasing
sequence $\mathcal X=(X_1\geq\cdots\geq X_m)$ of covectors strictly below
$C$.  Repeating a face records repeated crossings of the coordinates
in its zero set.  Its degree and crossing vector are
\[
 k(\mathcal X)=\sum_{i=1}^mr_M(Z(X_i)),\qquad
 w(\mathcal X)=\sum_{i=1}^m\mathbf1_{Z(X_i)}.
\]
Let $\operatorname{Flag}_{k,w}(\OM)_C$ denote the set of such flags with
the indicated data.  Recall that
\[
 U_j(w)=\{e:w_e\geq j\},\qquad
 \kappa(w)=\wt_M(w)=\sum_{j\geq1}r_M(U_j(w)).
\]
We call $(w,C)$ \emph{admissible} if $C\setminus U_j(w)$ is a covector
for every nonempty upper level set $U_j(w)$.  The condition is vacuous
for $w=0$.
For $w\ne0$, the only possible incident face flag of crossing vector
$w$ is
\[
 (C\setminus U_m(w)\geq\cdots\geq C\setminus U_1(w)),
 \qquad m=\max_e w_e.
\]
Indeed, the nested zero sets are recovered from their multiplicities
as the upper level sets in reverse order.  This flag exists exactly
when $(w,C)$ is admissible, and its degree is $\kappa(w)$.
For $w=0$ the unique flag is empty.  These observations explain the
equivalence of the two formulations below.  Admissibility also implies
$w\in\mathcal W(M)$, because covector zero sets are flats; membership
in $\mathcal W(M)$ alone does not ensure incidence to a prescribed $C$.

\begin{theorem}\label{thm:homology}
For every tope $C$ and crossing vector $w$, there is an isomorphism of
abelian groups
\[
 \MH_{k,w}(G;\mathbb Z)_{\bullet\to C}
 \cong\mathbb Z[\operatorname{Flag}_{k,w}(\OM)_C].
\]
Equivalently,
\begin{equation}\label{eq:coordinate-homology}
 \MH_{k,w}(G;\mathbb Z)_{\bullet\to C}\cong
 \begin{cases}
  \mathbb Z,&(w,C)\text{ is admissible and }k=\kappa(w),\\
  0,&\text{otherwise}.
 \end{cases}
\end{equation}
Consequently all integral magnitude homology groups of $G$ are free
abelian.
\end{theorem}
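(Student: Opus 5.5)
We will prove \eqref{eq:coordinate-homology} by induction on $|w|$, following the localization strategy of Koizumi \cite[Theorem~6.4]{Koizumi}. The geometric vanishing input is replaced by a combinatorial statement about tope intervals. The first step is a filtration of $\MC_{*,w}(G)_{\bullet\to C}$ by the penultimate tope. In a tuple $(B_0,\ldots,B_{k-1},C)$ with $k\ge1$, the last step crosses exactly $S=S(B_{k-1},C)$, and the differential either preserves $S$ or enlarges it: deleting $B_{k-1}$ replaces it by $S(B_{k-2},C)\supseteq S$. Filtering by $S$ therefore gives a spectral sequence, or an iterated mapping cone. Its associated graded pieces are
\[
 \MC_{*-1,\,w-\mathbf 1_S}(G)_{\bullet\to \eps_S C}\otimes(\text{a local complex}).
\]
The local complex depends only on the interval of topes between $\eps_S C$ and $C$, namely the topes $D$ with $S(D,C)\subseteq S$.

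The key lemma is a vanishing statement for these pieces. We will show that the relevant piece is acyclic unless $S=Z(X)$ for a covector $X<C$ and $\eps_S C=X\circ(-C)$, i.e.\ the last step is a ``jump across a face'' of $C$. In that case the piece contributes a shift by $r_M(Z(X))$ and reduces to the localization $\OM_X$, which is handled via Lemma~\ref{lem:face-gate}. To prove the lemma we will identify the local complex with the order complex of the open interval $(\eps_S C, C)$ in the tope poset $\preceq_C$, or with a finite-poset chain complex computing reduced homology of that interval. We then use the fact that tope intervals in oriented matroids are either spheres, of dimension $r_M(S)-2$, exactly when the bottom is $X\circ(-C)$ with $Z(X)=S$, or contractible otherwise (cf.\ \cite[\S4.4]{BLSWZ}, Edelman's result on the tope poset). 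The covectors of $\OM|S$ through $C|_S$ give the sphere. Since Lemma~\ref{lem:face-euler} already uses shellings, we can reuse \cite[Theorem~4.3.3, Proposition~4.7.26]{BLSWZ}: a nonantipodal interval is a proper shelling segment, hence a ball, hence acyclic.

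With the lemma in hand, induction on $|w|$ gives
\[
 \MH_{k,w}(G)_{\bullet\to C}\cong\bigoplus_{X<C}\MH_{k-r(Z(X)),\,w-\mathbf 1_{Z(X)}}(G)_{\bullet\to X\circ(-C)}\ \text{(restricted appropriately)}.
\]
Iterating produces exactly the weakly decreasing face flags $X_1\ge\cdots\ge X_m$. Monotonicity comes from the requirement that later crossings stay within the gated subgraph of the earlier face, again via Lemma~\ref{lem:face-gate}. The degree adds up to $k(\mathcal X)$ and the crossing vector to $w(\mathcal X)$. The equivalence with \eqref{eq:coordinate-homology} is the observation already made before the statement, that such a flag is unique and exists iff $(w,C)$ is admissible. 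Freeness follows because every group is $\mathbb Z$ or $0$.

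The main obstacle is making the spectral sequence degenerate, and showing that the extension problems are trivial, so that we get an isomorphism rather than only a rank count. Here we will exploit that in each crossing-vector component the $E_1$ page is concentrated in a single total degree $\kappa(w)$, by uniqueness of the flag. Concentration forces degeneration and gives a single $\mathbb Z$, so no extension issue arises. The delicate point is the precise identification of the local complex with the reduced chains of the tope interval, including the sign conventions and the condition that interior deletions stay on geodesics. We would first verify it for rank two, where $G$ is a cycle, before proving it in general.
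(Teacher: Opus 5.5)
There is a genuine gap, and it is already in your first step. The filtration you describe has no local factor, so your key lemma is false for it. Your recursion is also false as stated, independently of the spectral sequence.

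\textbf{The filtration and the key lemma.} In a tuple $(B_0,\ldots,B_{k-1},C)$ the entries $B_{k-1}=\eps_SC$ and $C$ are consecutive, so no vertex of the tuple lies strictly between them. The filtration by $S=S(B_{k-1},C)$ therefore has associated graded piece exactly $\MC_{*-1,w-\mathbf 1_S}(G)_{\bullet\to\eps_SC}$. There is no local complex of the interval $[\eps_SC,C]$, the shift is by one rather than by $r_M(S)$, and the tope-interval theorem never enters.

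Take the hexagon ($M=U_{2,3}$) with $w=\mathbf 1_E$:
\begin{itemize}
\item the piece for $S=E$ is the single chain $(-C,C)$ in degree $1$;
\item the two singleton pieces are acyclic;
\item the pieces for the two two-element sets $S$ with $\eps_SC$ a tope are spanned by $(-C,\eps_SC,C)$. These sets are not flats, so $C\setminus S\notin\cL$, yet each piece has homology $\Z$ in degree $2$.
\end{itemize}
So $E_1$ has $\Z$ in degree $1$ and $\Z^2$ in degree $2$. The answer $\Z$ in degree $\kappa(\mathbf 1_E)=2$ appears only after a surjective $d_1$, so $E_1$ is not concentrated.

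Separately, the shelling facts cannot prove the nonfacial case. They show that the union of closed cells over any proper interval is a ball, facial or not. The relevant object is the order complex of the open interval, and for that you need \cite[Theorem~4.4.2]{BLSWZ}.

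\textbf{The recursion.} Even granting your $E_1$ page, the recursion $\bigoplus_{X<C}\MH_{k-r(Z(X)),\,w-\mathbf 1_{Z(X)}}(G)_{\bullet\to X\circ(-C)}$ is false. In the hexagon let $e$ label an edge of $C$, so $C\setminus\{e\}\in\cL$, and take $w=\mathbf 1_E+\mathbf 1_{\{e\}}$. The chains ending at $C$ with crossing vector $w$ number $3,7,3$ in degrees $2,3,4$, so the homology has Euler characteristic $-1$. Your sum contains two copies of $\Z$ in total degree $3$:
\begin{itemize}
\item $\MH_{1,\mathbf 1_{\{e\}}}(G)_{\bullet\to -C}\cong\Z$ from $X=0$;
\item $\MH_{2,\mathbf 1_E}(G)_{\bullet\to\eps_{\{e\}}C}\cong\Z$ from $X=C\setminus\{e\}$.
\end{itemize}
The third face contributes nothing, so your sum gives Euler characteristic $-2$.

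Uniqueness of the face flag does not give a unique summand, because your summands pair a last face with an arbitrary flag for the remainder. Your ``restricted appropriately'' is exactly the content of the theorem: all but one candidate must cancel, and nothing in the proposal forces this.

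You could instead filter by the maximal terminal geodesic segment, whose separation set the differential can only shrink. Then $(\eps_SC,C)$ does appear as a tensor factor and your lemma becomes true. But the other factor is only the subcomplex of $\MC_{*,w-\mathbf 1_S}(G)_{\bullet\to\eps_SC}$ of chains whose last step meets $S$, and your induction does not control its homology.

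\textbf{What the paper does.} The paper obtains the missing vanishing globally.
\begin{itemize}
\item Proposition~\ref{prop:homology-localization} gives acyclicity when $C\setminus\supp(w)$ is not a covector. It uses cumulative crossing vectors, the parity posets $T,N$, relative Alexander duality on the staircase ball, and the lower-ideal gluing of Lemma~\ref{lem:acyclic-support}.
\item Periodicity, Proposition~\ref{prop:homology-periodicity}, rests on the acyclicity of the chains whose cumulative crossing vector avoids $\mathbf 1_E$. The remaining chains begin with a geodesic from $B_0$ to $-B_0$.
\end{itemize}
Together these remove the face $C\setminus\supp(w)$, the one with the largest zero set, in a single step at the start of the chain.
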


For realizable oriented matroids, the face-flag formula is
\cite[Theorem~6.4]{Koizumi}; the refinement to summands of rank one
is due to Liu \cite[Section~4.2]{Liu}.
The proof uses localization and periodicity.  Localization restricts
chains of crossing vector $w$ and terminal tope $C$ to
$S=\supp(w)$ when $C\setminus S$ is a covector, and asserts that
their homology vanishes otherwise
(Proposition~\ref{prop:homology-localization}).
Periodicity subtracts $\mathbf1_E$ from a crossing vector
$w\geq\mathbf1_E$ and shifts homological degree by $r(M)$
(Proposition~\ref{prop:homology-periodicity}).
The proof of localization in \cite[Theorem~3.1]{Koizumi} uses a
geometric vanishing theorem.  We replace that step by the
tope-interval theorem and finite-poset arguments; the proof of
periodicity in \cite[Theorem~3.2]{Koizumi} then applies without a
realization.

For distinct topes $A,B$, the \emph{metric interval} $[A,B]_G$ is
defined by
\[
 [A,B]_G=\{D:d(A,D)+d(D,B)=d(A,B)\}.
\]
Order it by inclusion of separation sets from $A$, and let $(A,B)_G$
be the poset obtained by deleting its endpoints.  The
interval is \emph{facial} if it is the set
$\cT_X=\{D:X\leq D\}$ of topes above a covector $X$.
Write $\Delta P$ for the order complex of a poset $P$.

\begin{theorem}[{\cite[Theorem~4.4.2]{BLSWZ}}]\label{thm:tope-interval}
If $[A,B]_G=\cT_X$ is facial, then
\begin{equation}\label{eq:interval-sphere}
 \Delta(A,B)_G\simeq S^{r_M(Z(X))-2}.
\end{equation}
Here $S^{-1}=\varnothing$, with reduced homology
$\widetilde H_{-1}(S^{-1};\mathbb Z)=\mathbb Z$.
Otherwise $\Delta(A,B)_G$ is contractible.
\end{theorem}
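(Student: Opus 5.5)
The plan is to deduce both cases from the topology of the regular cell sphere $K(\OM)$ of nonzero covectors \cite[Theorem~4.3.3]{BLSWZ}, comparing the order complex of $(A,B)_G$ with a subcomplex of $K(\OM)$. In the statement this result is cited from \cite{BLSWZ}; what follows sketches how I would prove it.

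\emph{Setup.} Put $S=S(A,B)$. Then
\[
 [A,B]_G=\{D\in\cT(\OM):D_e=A_e \text{ for } e\notin S\}.
\]
This is an intersection of closed half-spaces, hence a $T$-convex set of topes. Let $\Sigma$ be the set of covectors lying below some tope of $[A,B]_G$. Equivalently, $\Sigma$ consists of the covectors $X$ with $X_e\in\{0,A_e\}$ for $e\notin S$. Its cells form a subcomplex $K_\Sigma$ of $K(\OM)$. Exactly as in the proof of Lemma~\ref{lem:face-euler}, choose a linear extension of the tope poset based at $A$ in which the order ideal $[A,B]_G$ comes first. Then \cite[Proposition~4.7.26]{BLSWZ} shows that $K_\Sigma$ is either the whole sphere (when $S=E$) or a closed ball of dimension $r-1$.

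\emph{Comparison map.} I would compare $\Delta(A,B)_G$ with a subposet of covectors via Quillen's fiber lemma. Define $\Phi$ to be the poset of covectors $X\in\Sigma$ such that $A\not\ge X$ and $B\not\ge X$, that is, faces of $K_\Sigma$ lying in neither the closed star of $A$ nor that of $B$. A more robust variant is to map each tope $D$ to the set of covectors in the cone between $D$ and the vertices $A,B$. The fibers should be contractible because the upper and lower fibers are themselves tope intervals or $T$-convex sets of smaller rank or size. This suggests an induction on $|S|$ together with the rank.

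Under this identification, $\Delta(A,B)_G$ becomes homotopy equivalent to the ball (or sphere) $K_\Sigma$ with the open stars of $A$ and $B$ removed, up to the boundary of $K_\Sigma$.

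\emph{Facial case.} If $[A,B]_G=\cT_X$, Lemma~\ref{lem:face-gate} identifies the interval with the full tope poset of the localization $\OM_X$, with $B$ corresponding to $-A$. So it suffices to show the following: for an oriented matroid of rank $d$, the proper part of the tope poset between a tope and its antipode is $\simeq S^{d-2}$. Here $K_\Sigma$ is the whole $(d-1)$-sphere. Removing the two disjoint open stars of antipodal vertices (each an open ball) leaves something homotopy equivalent to $S^{d-2}$. Rank $1$ gives the empty set, matching the convention $S^{-1}=\varnothing$.

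\emph{Nonfacial case.} This is the step I expect to be the main obstacle. If the interval is not facial, I expect $K_\Sigma$ to be a genuine ball whose boundary meets the region left after removing the stars of $A$ and $B$ in a way that makes that region collapsible. The first thing I would try is to use non-faciality, via strong elimination, to produce an element $e\in S$ and a covector $X$ with $Z(X)\subsetneq S$, with all of $[A,B]_G$ lying above $X$ or on one side of it. From these I would build a cone point for the poset $(A,B)_G$, for instance the tope $X\circ A$ or the join with a fixed element. This would show contractibility directly. If that fails, I would fall back on an induction using the shelling to collapse $K_\Sigma$ onto a ball that avoids the stars of $A$ and $B$.
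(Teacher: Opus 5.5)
The paper gives no proof of this statement; it is quoted from \cite[Theorem~4.4.2]{BLSWZ}. Read as a proof, your sketch has genuine gaps at the two places where the content lies.

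First, the comparison step is never carried out. You define $\Phi$, but you give no order-preserving map between $\Phi$ and $(A,B)_G$, and you assert that the fibers are contractible (``should be'') without proof. The natural candidate is $\phi(X)=X\circ A$. It is order-preserving because $S(A,X\circ A)=\{e:X_e=-A_e\}$. It maps $\Phi$ into $(A,B)_G$, since $X\circ A=A$ if and only if $X\le A$, and $X\circ A=B$ forces $X\le B$. But the fiber $\phi^{-1}(\preceq D)$ over $D\in(A,B)_G$ is the set of cells of the ball $K_{[A,D]}$ not lying in $\bar A\cup\bar B$. That is not a smaller tope interval. It is a ``ball minus closed cells'' contractibility statement of the same nature as the theorem, and it needs its own argument. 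The dictionary also needs care. Topes are top-dimensional cells of $K(\OM)$, not vertices, so one must remove the closed cells $\bar A,\bar B$ (as your $\Phi$ does), not ``open stars''. Removing only the open cells already fails in rank two: for $n\ge3$ lines and $d(A,B)=2$, the two outer endpoints of the arc $K_\Sigma$ survive as isolated points, while $(A,B)_G$ is a single tope.

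Second, the nonfacial case, which is half the theorem, is left as a list of things to try, and the cone-point idea fails in general. Consider the rank-three braid arrangement, whose tope poset is the right weak order on $S_4$. There the interval $[e,s_1s_2s_1s_3]$ is not facial, because no flat of $M(K_4)$ has exactly four elements. Its proper part $\{s_1,s_2,s_1s_2,s_2s_1,s_1s_2s_1,s_1s_2s_3\}$ is contractible, yet no element is comparable to all the others. Joins are not available either, since tope posets are lattices only in special cases.

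The missing idea is to use the meet $Y$ of $A$ and $B$ in the covector poset, namely the composition of all their common faces. Then $\bar A\cap\bar B=\bar Y$, which is empty if $Y=0$. Also $Z(Y)\supseteq S$, because common faces vanish on $S$. Moreover $Z(Y)=S$ exactly when the interval is facial, and in that case $[A,B]_G=\cT_Y$.
\begin{itemize}
\item In the facial case, $K_\Sigma$ is the closed star of $Y$. The complement $K_\Sigma\setminus(\bar A\cup\bar B)$ retracts onto the link of $Y$ minus two antipodal closed cells, which is homotopy equivalent to $S^{r_M(Z(Y))-2}$.
\item In the nonfacial case, $Y$ vanishes somewhere outside $S$, so $\bar A\cap\bar B\subseteq\partial K_\Sigma$. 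One must still prove that the complement is contractible, for instance by using the shelling to show that $\bar A$ and $\bar B$ meet $\partial K_\Sigma$ in balls.
\end{itemize}
Neither this nor the fiber-lemma step is supplied, so the proposal is a plausible plan rather than a proof.
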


Fix a tope $A$, and encode all topes relative to $A$ by
\[
 \mathcal C_A=
 \{\mathbf1_{S(A,D)}:D\in\cT(\OM)\}\subseteq\{0,1\}^E.
\]
Both $0$ and $\mathbf1_E$ belong to $\mathcal C_A$.  If $a\in\mathcal
C_A$ and $D_a$ is its tope, the interval $[A,D_a]_G$ is facial precisely
when $A\setminus\operatorname{supp}(a)$ is a covector.
Moreover, if this interval is facial, then
\begin{equation}\label{eq:facial-meet}
 a\wedge q\in\mathcal C_A\qquad(q\in\mathcal C_A),
\end{equation}
where $\wedge$ is coordinatewise minimum.  The tope realizing this
vector is $(A\setminus\operatorname{supp}(a))\circ D_q$.

\subsection{A vanishing argument without a realization}

In this subsection abbreviate $\mathcal C_A$ to $\mathcal C$.  For a
nonnegative integer vector $\beta$, let $\bar\beta$ be its coordinatewise reduction
modulo two, and put
\[
 T=\{\beta\in\mathbb N^E:\bar\beta\in\mathcal C\},\qquad
 N=\{\beta\in\mathbb N^E:\bar\beta\notin\mathcal C\}.
\]
All these sets have the coordinatewise partial order, and all vectors in
this subsection lie in $\N^E$.  For $\alpha\ne0$,
the order complex
\[
 D_\alpha=\Delta\{\beta:0\leq\beta\leq\alpha\}
\]
is the staircase triangulation of $\prod_e\Delta^{\alpha_e}$, a PL ball
of dimension $|\alpha|$.  A simplex belongs to its boundary exactly
when at least one coordinate omits one of the values
$0,1,\ldots,\alpha_e$.

We use the following form of relative Alexander duality, including
the convention $\widetilde H^{-1}(\varnothing;\Z)=\Z$.

\begin{lemma}[{\cite[Lemma~4.3]{Koizumi}}]\label{lem:homology-ball-duality}
Let $K$ be a finite triangulation of a $d$-dimensional PL ball, with
boundary subcomplex $\partial K$.  Partition its vertices into $V_0$
and $V_1$, and let $K_0,K_1$ be the respective induced subcomplexes.
Then, with integral coefficients,
\[
 H_k(K_0,K_0\cap\partial K)
 \cong\widetilde H^{d-k-1}(K_1).
\]
\end{lemma}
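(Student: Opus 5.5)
We prove Lemma~\ref{lem:homology-ball-duality} by reducing it to ordinary Alexander duality on the sphere $S=K\cup_{\partial K}\mathrm{cone}(\partial K)$. Some preliminary reductions come first. Write $K_0\cap\partial K$ for the full subcomplex of $\partial K$ on $V_0\cap V(\partial K)$. Since $K_0$ and $K_1$ are induced subcomplexes on complementary vertex sets, a standard deformation argument applies. The complement $|K|\setminus|K_1|$ deformation retracts onto $|K_0|$: each point of a simplex $\sigma$ that does not lie in $K_1$ has barycentric coordinates with positive mass on $V_0$, and one pushes linearly away from the $V_1$-face of $\sigma$. The same retraction preserves $|\partial K|$, because it stays inside the closed simplex and boundary simplices are closed under faces. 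It therefore carries $|\partial K|\setminus|K_1|$ onto $|K_0\cap\partial K|$. Hence
\[
 H_k(K_0,K_0\cap\partial K)\cong H_k\bigl(|K|\setminus|K_1|,\,|\partial K|\setminus|K_1|\bigr).
\]

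Next we pass to the sphere. Let $c$ be the cone point and $S=K\cup c*\partial K$, a PL $d$-sphere. We put $c$ into the "$1$" side: let $L_1=K_1\cup c*(K_1\cap\partial K)$, which is the full subcomplex of $S$ on $V_1\cup\{c\}$. Then $|S|\setminus|L_1|$ deformation retracts, by the same pushing argument, onto the full subcomplex on $V_0$, which is $K_0$. Meanwhile $|L_1|$ is homotopy equivalent to the mapping cone of the inclusion $K_1\cap\partial K\hookrightarrow K_1$.

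This is not quite what we want, so instead put $c$ on the $0$ side. Let $L_0$ be the full subcomplex on $V_0\cup\{c\}$, which equals $K_0\cup c*(K_0\cap\partial K)$. Its complement in $|S|$ retracts onto $K_1$. By excision and the cone,
\[
 \widetilde H_k(L_0)\cong H_k(K_0,K_0\cap\partial K)
\]
for all $k$, including the edge cases: if $K_0\cap\partial K=\varnothing$, the cone is a point and reduced homology of $K_0\sqcup\{c\}$ gives $H_k(K_0)$. This choice is the natural one.

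Alexander duality in the PL $d$-sphere $S$ then gives
\[
 \widetilde H_k(L_0)\cong\widetilde H^{d-k-1}\bigl(|S|\setminus|L_0|\bigr)\cong\widetilde H^{d-k-1}(K_1).
\]
This holds with the convention $\widetilde H^{-1}(\varnothing)=\mathbb Z$, which covers the case $L_0=S$ minus nothing, i.e.\ $V_1=\varnothing$. In that case $L_0=S$ and $\widetilde H_d(S)=\mathbb Z$ matches $k=d$.

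The main obstacle is the careful verification of the edge cases, together with the claim that $L_0$ is a full subcomplex whose complement retracts onto $K_1$. The point of the latter is that every simplex of $S$ on vertices in $V_1$ lies in $K$, since $c\notin V_1$, and so it is a simplex of $K_1$. The edge cases are the degenerate situations $V_0=\varnothing$, $V_1=\varnothing$, and $K_0\cap\partial K$ empty. We check these directly against the stated conventions.
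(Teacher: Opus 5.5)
Your argument is correct. The paper gives no proof of this lemma; it is quoted from Koizumi's Lemma~4.3. So the useful comparison is with the other route your first paragraph sets up. That route deformation-retracts to the pair $(|K|\setminus|K_1|,\,|\partial K|\setminus|K_1|)$ and then applies Lefschetz duality for the ball relative to its boundary:
$H_k(|K|\setminus|K_1|,|\partial K|\setminus|K_1|)\cong H^{d-k}(|K|,|K_1|)\cong\widetilde H^{d-k-1}(K_1)$.

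Your coning construction avoids that relative duality. Excision against the contractible cone $c*(K_0\cap\partial K)$ turns the relative group into the absolute group $\widetilde H_k(L_0)$. After that you only need classical Alexander duality in $|S|\cong D^d\cup_{S^{d-1}}\mathrm{cone}(S^{d-1})\cong S^d$. The key check is right: since $c\notin V_1$, the full subcomplex of $S$ on $V_1$ is $K_1$. Your degenerate cases also agree with the convention $\widetilde H^{-1}(\varnothing)=\mathbb Z$.

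Two tidying remarks. First, the opening retraction and the ``$c$ on the $1$ side'' detour are never used and can be cut. That detour actually proves the same lemma with $V_0$ and $V_1$ interchanged, so it was not the wrong statement. Second, the textbook form of Alexander duality puts cohomology on the compact set. It is therefore cleanest to apply it to $|K_1|$, which is nonempty when $V_1\neq\varnothing$ and proper because $c\notin|K_1|$. Then use that $|S|\setminus|K_1|$ deformation retracts onto $|L_0|$ by the same pushing argument, rather than working with the complement of $L_0$.
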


\begin{lemma}\label{lem:parity-model}
For $\alpha\ne0$ there are isomorphisms
\begin{align*}
 \mathrm{MC}_{*,\alpha}(G)_{A\to\bullet}
 &\cong C_*(\Delta T_{\leq\alpha},
              \Delta T_{\leq\alpha}\cap\partial D_\alpha),\\
 \MH_{k,\alpha}(G)_{A\to\bullet}
 &\cong\widetilde H^{|\alpha|-k-1}(\Delta N_{\leq\alpha};\mathbb Z).
\end{align*}
\end{lemma}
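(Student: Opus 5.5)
The plan is to encode chains starting at $A$ by their cumulative crossing vectors, and then to apply Lemma~\ref{lem:homology-ball-duality} to the ball $D_\alpha$ with vertices partitioned into $T_{\le\alpha}$ and $N_{\le\alpha}$.

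\emph{Step 1: encoding chains.} Given a tuple $(A=B_0,\ldots,B_k)$ with crossing vector $\alpha$, I would set $\beta_i=w(B_0,\ldots,B_i)$. This gives a strictly increasing sequence $0=\beta_0<\beta_1<\cdots<\beta_k=\alpha$; it is strict because consecutive topes are distinct. Each coordinate $\beta_i$ reduces mod two to $\mathbf1_{S(A,B_i)}$, so $\bar\beta_i\in\mathcal C$ and hence $\beta_i\in T_{\le\alpha}$. Conversely, any strict chain in $T$ from $0$ to $\alpha$ determines the topes $B_i$ as the topes encoded by $\bar\beta_i$. The increment $\beta_i-\beta_{i-1}$ must be exactly $\mathbf1_{S(B_{i-1},B_i)}$. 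Since total length is preserved, I would require the increments to be $0/1$ vectors; this is automatic once we restrict to simplices in the interior of $D_\alpha$, as explained next.

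\emph{Step 2: matching the complexes.} The generators of $\mathrm{MC}_{k,\alpha}(G)_{A\to\bullet}$ should correspond to the $k$-simplices of $\Delta T_{\le\alpha}$ that are \emph{not} in $\partial D_\alpha$, with the endpoints $0,\alpha$ adjoined, giving a degree shift that I would fix. A simplex lies off the boundary exactly when every coordinate takes every value $0,\ldots,\alpha_e$. That is the same as saying all increments are $0/1$ and the chain includes $0$ and $\alpha$, which is precisely the condition that the tuple has crossing vector $\alpha$. The magnitude face map $\partial_i$ deletes $B_i$ exactly when it lies on a geodesic. Deleting $\beta_i$ keeps the increment $\beta_{i+1}-\beta_{i-1}$ a $0/1$ vector exactly when the two consecutive separation sets are disjoint, which is the geodesic condition by \eqref{eq:tope-distance}. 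Otherwise the resulting face lands in $\partial D_\alpha$ and vanishes in the relative complex. The endpoint faces likewise land in the boundary. So the relative simplicial chain complex (modulo boundary) agrees with the magnitude chain complex, up to a consistent shift and signs. I would adjust the indexing so that $\mathrm{MC}_k$ matches relative chains in the degree stated in the lemma, and check that the sign conventions agree up to an isomorphism.

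\emph{Step 3: duality.} Apply Lemma~\ref{lem:homology-ball-duality} with $K=D_\alpha$ of dimension $d=|\alpha|$, $V_0=T_{\le\alpha}$ and $V_1=N_{\le\alpha}$. Both induced subcomplexes are full subcomplexes of an order complex, so they equal $\Delta T_{\le\alpha}$ and $\Delta N_{\le\alpha}$. This gives $H_k\cong\widetilde H^{|\alpha|-k-1}(\Delta N_{\le\alpha})$.

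The main obstacle is the degree bookkeeping in Step 2. The endpoints $0$ and $\alpha$ are forced vertices, so a $k$-tuple corresponds to a $k$-simplex $\{\beta_0,\ldots,\beta_k\}$, and the boundary faces must match exactly the vanishing magnitude faces. In particular, omitting $\beta_0$ or $\beta_k$ must always land in $\partial D_\alpha$, which uses $\alpha\neq0$.
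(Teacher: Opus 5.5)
Your proposal is correct and follows the paper's proof essentially step for step. The paper also encodes a chain by its cumulative crossing vectors and identifies the simplices of $\Delta T_{\leq\alpha}$ not in $\partial D_\alpha$ with magnitude chains. It matches the vanishing face maps with faces that land in the boundary, and then applies Lemma~\ref{lem:homology-ball-duality} with $d=|\alpha|$ to the induced subcomplexes on $T_{\leq\alpha}$ and $N_{\leq\alpha}$.

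As you eventually note, there is no degree shift: a $k$-tuple corresponds to the $k$-simplex $\{\beta_0,\ldots,\beta_k\}$. The signs also agree exactly, not just up to isomorphism, because the endpoint terms of the simplicial boundary vanish in the relative complex, leaving precisely $\sum_{i=1}^{k-1}(-1)^i\partial_i$.
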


\begin{proof}
The chain isomorphism is proved by the same argument as
\cite[Lemma~5.8]{Koizumi}, with the vertex $\mathbf1_E$ retained in $T$.
For a magnitude chain $(A=C_0,\ldots,C_k)$, define its
\emph{cumulative crossing vectors} by
\[
 \lambda_i=\sum_{j=1}^i\mathbf1_{S(C_{j-1},C_j)}
 \qquad(0\leq i\leq k).
\]
The correspondence is
\[
 (A=C_0,\ldots,C_k)\longmapsto
 [0=\lambda_0<\cdots<\lambda_k=\alpha].
\]
The parity of $\lambda_i$ specifies $C_i$.  A simplex not in the boundary
contains the two endpoints and has every successive difference in
$\{0,1\}^E\setminus\{0\}$; it therefore determines a unique magnitude
chain.  Deleting an endpoint gives a boundary simplex.  Deleting an
interior vertex gives a boundary simplex exactly when a coordinate
jumps by two, which is exactly when the corresponding magnitude
differential term vanishes.  This proves the chain isomorphism.

For the second isomorphism, the order complexes
$\Delta T_{\leq\alpha}$ and $\Delta N_{\leq\alpha}$ are the induced
subcomplexes of $D_\alpha$ on the two parts of its vertex partition.
Apply Lemma~\ref{lem:homology-ball-duality} with $d=|\alpha|$;
the dual cohomological degree is therefore $|\alpha|-k-1$.
\end{proof}

Call a finite poset \emph{acyclic} if its order complex has zero reduced
integral homology in all degrees.  In particular, the empty poset is
not acyclic.  We use two elementary facts, recorded in
\cite[Lemmas~4.1--4.2]{Koizumi}: an inclusion $P\subseteq Q$ induces an
isomorphism on reduced homology if $P_{\leq x}$ is acyclic for every
$x\in Q\setminus P$; and an intersection-closed nonempty finite family
of acyclic lower ideals has acyclic union.

\begin{lemma}\label{lem:acyclic-support}
The following statements hold.
\begin{enumerate}
\item If $q\in\mathcal C$ and $\gamma\wedge q\in N$, then
      $N_{\leq\gamma}$ is acyclic.
\item If $S\ne\varnothing$ and $N_{\leq\mathbf1_S}$ is acyclic, then
      $N_{\leq\alpha}$ is acyclic for every $\alpha$ with
      $\operatorname{supp}(\alpha)=S$.
\end{enumerate}
\end{lemma}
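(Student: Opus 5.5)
Both parts will be proved by exhibiting the order complex as a union of cones, or by applying the two poset facts quoted just before the lemma: the inclusion criterion, and the union of an intersection-closed family of acyclic lower ideals.

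\emph{Part (1).} Put $\delta=\gamma\wedge q$. Note that $\delta\in N$ and $\delta\le\gamma$, so $N_{\le\gamma}$ is nonempty. I will show that $\Delta N_{\le\gamma}$ is contractible by a monotone map onto a cone point. Define $\phi(\beta)=\beta\wedge q$ on $\{\beta\le\gamma\}$, where $q$ is regarded as a $0/1$ vector. Reduction mod two does not commute with $\wedge q$ in general, so I will instead use the map $\psi(\beta)=\beta\wedge\delta'$ for a vector $\delta'$ that agrees with $\gamma$ on $\supp(q)$ and is zero off it. Then $\psi(\beta)$ keeps the coordinates of $\beta$ on $\supp(q)$ and kills the rest, and $\psi(\gamma)=\delta'$.

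The parity $\overline{\psi(\beta)}$ equals $\bar\beta\wedge q$. Here I expect to need \eqref{eq:facial-meet} or its analogue: $\bar\beta\wedge q\in\mathcal C$ whenever $\bar\beta\in\mathcal C$, provided $q$ gives a facial interval. So $\psi$ preserves $T$. What I actually need is the converse direction: if $\beta\in N$, then either $\psi(\beta)\in N$ or some substitute holds. If this fails, I will use the other standard trick. For $\beta\in N_{\le\gamma}$, the element $\beta\vee\delta$ should lie in $N$, because its parity is determined by $\bar\beta$ on $\supp(q)^c$ and by $\bar\delta$ on $\supp(q)$. Then $\beta\le\beta\vee\delta\ge\delta$ is a zigzag of monotone maps, and the Quillen homotopy $\mathrm{id}\le(-\vee\delta)\ge\mathrm{const}_\delta$ contracts $\Delta N_{\le\gamma}$. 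The main check is that $\beta\vee\delta\in N$ for all $\beta\in N_{\le\gamma}$, with $\beta\vee\delta\le\gamma$ automatic. This parity bookkeeping is the step I expect to be the obstacle, and the exact hypothesis may need to be used through the tope $(A\setminus\supp(\cdot))\circ D_q$.

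\emph{Part (2).} I will induct on $|\alpha|$, or on $\max_e\alpha_e$, comparing $N_{\le\alpha}$ with $N_{\le\alpha'}$ where $\alpha'$ lowers one coordinate. Alternatively, I will use the periodicity map $\beta\mapsto\beta+2\mathbf1_U$, which preserves parity and hence $N$. Concretely, write $N_{\le\alpha}$ as the union of the lower ideals $N_{\le\beta}$ over the maximal elements of $\{\beta\le\alpha:\supp\beta=S,\ \beta\text{ has entries in }\{1,2\}\text{-patterns}\}$. Each such ideal is identified, via reduction of even excess, with a product-like refinement of $N_{\le\mathbf1_S}$.

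Cleaner still is the inclusion criterion. Let $P=N_{\le\mathbf1_S}$ and $Q=N_{\le\alpha}$, and suppose $x\in Q\setminus P$. Then $Q_{\le x}=N_{\le x}$ with $x\not\le\mathbf1_S$. By induction on $|x|$, $N_{\le x}$ is acyclic when $\supp x=S$. When $\supp x\subsetneq S$, I would treat $x$ by the same argument applied to a smaller support, which again requires acyclicity at $\mathbf1_{\supp x}$. That is not given, so I will instead order the induction so that only elements with full support $S$ arise. The inclusion criterion then gives $\widetilde H(P)\cong\widetilde H(Q)$, and $P$ is acyclic by hypothesis, so $Q$ is acyclic. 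The base case $|x|=|S|$ is $x=\mathbf1_S$, which is the hypothesis.
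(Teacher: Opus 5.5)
\textbf{Part (1).} The step you flag as the obstacle is false, and so is your fallback. The set $\mathcal C$ is not closed under $\vee$. By \eqref{eq:facial-meet}, closure under $\wedge$ with a fixed element $a$ is guaranteed only when $a$ is facial, and that is not assumed here.

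A concrete failure: take five lines through the origin in $\R^2$, labelled in the order in which a geodesic from $A$ to $-A$ crosses them. Then $\mathcal C$ consists of the indicators of the prefixes $\{1,\dots,j\}$ and the suffixes $\{k,\dots,5\}$. For $\gamma=\mathbf 1_{\{1,2,3,4\}}$ and $q=\mathbf 1_{\{3,4,5\}}$ you get $\delta=\delta'=\mathbf 1_{\{3,4\}}\in N$. However, $\beta=\mathbf 1_{\{1,2,4\}}\in N_{\le\gamma}$ gives $\beta\vee\delta=\gamma\notin N$, and $\beta=\mathbf 1_{\{2\}}\in N_{\le\gamma}$ gives $\beta\wedge\delta'=0\notin N$. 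So neither of your maps sends $N_{\le\gamma}$ into $N$. For entries $\ge2$ your parity bookkeeping also fails: $\beta_e=2$ and $\delta_e=1$ give $(\beta\vee\delta)_e=2$.

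The missing ingredient is topological.
\begin{itemize}
\item In the zero--one case with $\gamma\in\mathcal C$, the hypothesis together with \eqref{eq:facial-meet} makes $[A,D_\gamma]_G$ nonfacial. Theorem~\ref{thm:tope-interval} then makes $\Delta(A,D_\gamma)_G$ contractible.
\item Chains from $A$ with crossing vector $\gamma$ are geodesic, so $\MC_{*,\gamma}(G)_{A\to D_\gamma}$ is a shifted augmented chain complex of that interval. Lemma~\ref{lem:parity-model} converts its acyclicity into acyclicity of $N_{\le\gamma}$.
\item Vectors with entries $\ge2$ then need an induction on $|\gamma|$. With $H=\{e:\gamma_e\ge2\}$, the union $W=\bigcup_{\varnothing\ne X\subseteq H}N_{\le\gamma-\mathbf 1_X}$ is acyclic by induction and gluing. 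Every $\beta\in N_{\le\gamma}\setminus W$ agrees with $\gamma$ on $H$. For such $\beta$, $W_{\le\beta}$ is acyclic by the induction hypothesis applied with $q$ replaced by $\bar\gamma$, because $(\beta-\mathbf 1_X)\wedge\bar\gamma=\bar\beta\in N$.
\end{itemize}

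\textbf{Part (2).} You check the wrong fibre in the inclusion criterion. For $P=N_{\le\mathbf 1_S}\subseteq Q=N_{\le\alpha}$, the criterion requires $P_{\le x}=N_{\le\mathbf 1_{\supp x}}$ to be acyclic for every $x\in Q\setminus P$. The set $Q_{\le x}=N_{\le x}$ that you examine always has the maximum $x$, so checking it proves nothing.

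If $\supp x=S$, then $P_{\le x}=P$ and the hypothesis applies. But $Q\setminus P$ can contain elements of smaller support. No ordering of an induction removes them, because the criterion quantifies over all of $Q\setminus P$. Nothing in the hypothesis controls $N_{\le\mathbf 1_{S'}}$ for $S'\subsetneq S$. This is exactly where Part (1) has to enter, and your argument never uses it.

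The paper instead compares $N_{\le\alpha}$ with $W=\bigcup_{\varnothing\ne X\subseteq H}N_{\le\alpha-\mathbf 1_X}$, where $H=\{e:\alpha_e\ge2\}$. This $W$ is acyclic by induction on $|\alpha|$ among vectors of support $S$. A remaining $\beta$ agrees with $\alpha$ on $H$ but may have smaller support. The ideals $N_{\le\beta-\mathbf 1_X}$ that make up $W_{\le\beta}$ are then acyclic by Part (1) with $q=\bar\alpha$.
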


\begin{proof}
We adapt the induction argument in \cite[Proposition~5.14]{Koizumi}.

\emph{Part (1), vectors with coordinates zero or one.}
First consider $a\in\mathcal C\setminus\{0\}$ for which $[A,D_a]_G$
is not facial.  Chains of crossing vector $a$ starting at $A$ are
geodesic and end at $D_a$.  Removing their endpoints and multiplying
in degree $k$ by $(-1)^k$ identifies
$\mathrm{MC}_{*,a}(G)_{A\to D_a}$ with the augmented chain complex
of $\Delta(A,D_a)_G$, shifted by two.
Theorem~\ref{thm:tope-interval} and
Lemma~\ref{lem:parity-model} thus imply that $N_{\leq a}$ is acyclic;
here integral cohomological acyclicity is equivalent to homological
acyclicity by the universal coefficient theorem.  By
\eqref{eq:facial-meet}, this applies whenever $a,q\in\mathcal C$ and
$a\wedge q\notin\mathcal C$.

If $\gamma\in N$, the lower interval has a maximum.  If $\gamma\notin N$ and
$\gamma\leq\mathbf1_E$, then $\gamma\in\mathcal C$ and
$\gamma\wedge q\notin\mathcal C$.
Thus $N_{\leq\gamma}$ is acyclic by the preceding
paragraph.  This proves (1) when $\gamma\leq\mathbf1_E$.

\emph{Part (1), general vectors.}
Argue simultaneously for all $q\in\mathcal C$ by induction on
$|\gamma|$.  The case $\gamma\in N$ again has a maximum, and the
zero--one case was proved above.  Thus suppose
$H=\{e:\gamma_e\geq2\}$ is nonempty and $\gamma\notin N$.
For every nonempty $X\subseteq H$,
\[
 (\gamma-\mathbf1_X)\wedge q=\gamma\wedge q\in N.
\]
Each lower ideal $N_{\leq\gamma-\mathbf1_X}$ is acyclic by induction.
The family is nonempty and closed under intersections, since
$N_{\leq\gamma-\mathbf1_X}\cap N_{\leq\gamma-\mathbf1_Y}=N_{\leq\gamma-\mathbf1_{X\cup Y}}$.
The lower-ideal gluing fact therefore shows that
\[
 W=\bigcup_{\varnothing\ne X\subseteq H}N_{\leq\gamma-\mathbf1_X}
\]
is acyclic.
For $\beta\in N_{\leq\gamma}\setminus W$ we have
$\beta_e=\gamma_e$ on $H$, and therefore
\[
 W_{\leq\beta}
 =\bigcup_{\varnothing\ne X\subseteq H}N_{\leq\beta-\mathbf1_X}.
\]
Since $\beta\in N_{\leq \gamma}$, we have $\bar\beta\notin\mathcal C$,
whereas $\bar\gamma\in\mathcal C$. Note that
\begin{equation}\label{eq:parity-meet-induction}
 (\beta-\mathbf1_X)\wedge\bar\gamma=\bar\beta\in N.
\end{equation}
Indeed, if $e\in H$, the left coordinate before taking the minimum is at least one
and $\bar\beta_e=\bar\gamma_e$; otherwise use
$\beta_e\leq\gamma_e\leq1$.  Induction applied with
$q=\bar\gamma$ makes $N_{\leq\beta-\mathbf1_X}$ acyclic for every
nonempty $X\subseteq H$.  The same intersection formula with
$\gamma$ replaced by $\beta$ permits gluing, so $W_{\leq\beta}$
is acyclic.  The inclusion
$W\subseteq N_{\leq\gamma}$ consequently induces an isomorphism on
reduced homology, proving (1).

\emph{Part (2), induction with fixed support.}
Induct on $|\alpha|$ among vectors with support $S$.
The case $\alpha=\mathbf1_S$ is the hypothesis,
and $\alpha\in N$ gives a maximum.  Otherwise take
$H=\{e:\alpha_e\geq2\}$ and form $W$ as above with
$\gamma=\alpha$.  Every $\alpha-\mathbf1_X$ still has support $S$,
so induction and the intersection formula make $W$ acyclic.
For a remaining $\beta$, formula
\eqref{eq:parity-meet-induction}, now with $\gamma=\alpha$, and the
already proved part (1) make all ideals in $W_{\leq\beta}$ acyclic.
Their intersections have the same form, so gluing makes
$W_{\leq\beta}$ acyclic.  The inclusion fact then transfers
acyclicity from $W$ to $N_{\leq\alpha}$.
\end{proof}

\begin{proposition}[Localization]\label{prop:homology-localization}
Let $w\ne0$, $S=\operatorname{supp}(w)$, and let $C$ be a tope.  If
$X=C\setminus S$ is a covector, restriction induces a chain isomorphism
\[
 \mathrm{MC}_{*,w}(G)_{\bullet\to C}
 \cong
 \mathrm{MC}_{*,w|_S}(G(\OM|_S))_{\bullet\to C|_S}.
\]
If $C\setminus S$ is not a covector, then
$\MH_{*,w}(G)_{\bullet\to C}=0$.
\end{proposition}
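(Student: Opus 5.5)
Throughout, fix a tope $C$ and set $A=\eps_wC$, so that every chain counted in $\MC_{*,w}(G)_{\bullet\to C}$ starts at $A$. This complex is then the same as the part of $\MC_{*,w}(G)_{A\to\bullet}$ that ends at $C$. By Lemma~\ref{lem:parity-model}, the topes reached along such chains are encoded, relative to $A$, by the cumulative crossing vectors $\lambda_i\le w$ with $\bar\lambda_i\in\mathcal C_A$. The two halves of the statement are handled separately. The first is a direct chain-level comparison. The second uses Lemma~\ref{lem:acyclic-support} to show acyclicity.

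\emph{The isomorphism.} Assume $X=C\setminus S$ is a covector. Every chain $(C_0,\ldots,C_k)$ with crossing vector $w$ never changes a coordinate outside $S$. Hence each $C_i$ agrees with $C$, and therefore with $X$, off $S$, so $C_i\ge X$. By Lemma~\ref{lem:face-gate}, restriction to $Z(X)=S$ identifies the subgraph $G_X$ isometrically with $G(\OM|_S)$. This gives a bijection on generating tuples that preserves endpoints, the crossing vector $w|_S$, and the distances between all pairs of vertices in the tuple. Preservation of distances is what we need for the differential, because the deletion rule for $\partial_i$ depends only on the distances $d(C_{i-1},C_{i+1})$, $d(C_{i-1},C_i)$ and $d(C_i,C_{i+1})$. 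The map therefore commutes with $\partial$, and this half of the proposition follows.

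\emph{The vanishing.} Now assume $C\setminus S$ is not a covector. I would replace the endpoint $C$ by the initial tope: since $\MC_{*,w}(G)_{\bullet\to C}=\MC_{*,w}(G)_{A\to C}$, it is enough to show that all of $\MH_{*,w}(G)_{A\to\bullet}$ vanishes. By Lemma~\ref{lem:parity-model}, this is the statement that $N_{\le w}$ (defined relative to $A$) is acyclic. Lemma~\ref{lem:acyclic-support}(2) then reduces the problem to showing that $N_{\le\mathbf1_S}$ is acyclic.

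\emph{The zero--one case.} The vector $\mathbf1_S$ reduces to $\mathbf1_S$ itself. If $\mathbf1_S\notin\mathcal C_A$, then $\mathbf1_S\in N$, so $N_{\le\mathbf1_S}$ has a maximum and is contractible. Otherwise $\mathbf1_S=\mathbf1_{S(A,D)}$ for a tope $D$. Every chain with crossing vector $w$ reverses, from $A$ to $C$, exactly the coordinates of $S$ where $w$ is odd, so $D$ and $C$ agree off $S$; hence $A\setminus S=D\setminus S=C\setminus S$. This sign vector is not a covector by assumption, so by the criterion stated before \eqref{eq:facial-meet} the interval $[A,D]_G$ is not facial. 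The first paragraph of the proof of Lemma~\ref{lem:acyclic-support}(1) then gives that $N_{\le\mathbf1_S}$ is acyclic: it uses the contractibility in Theorem~\ref{thm:tope-interval} together with Lemma~\ref{lem:parity-model}.

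\emph{Main obstacle.} The delicate point is the direction convention. $N$ and $\mathcal C$ are defined relative to the initial tope $A$, while admissibility is stated in terms of the terminal tope $C$. The reduction therefore hinges on the identity $A\setminus S=C\setminus S$, which holds because $w$ is supported on $S$. I would also check that passing from reduced cohomology to homology causes no torsion issues. The relevant groups are the cohomology of order complexes of acyclic posets, so the universal coefficient theorem disposes of this, exactly as in the proof of Lemma~\ref{lem:acyclic-support}.
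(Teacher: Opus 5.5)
Your proposal is correct and follows essentially the paper's proof. The chain isomorphism comes from restricting $\cT_X$ bijectively and isometrically to $\cT(\OM|_S)$. The vanishing reduces via Lemma~\ref{lem:acyclic-support}(2) to the zero--one vector $\mathbf1_S$, which is handled either by a maximum or by the nonfacial-interval argument.

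The only difference is the choice of base tope. You take the initial tope $A=\eps_wC$ and use $A\setminus S=C\setminus S$, which holds since $w$ is supported on $S$. The paper instead takes $A=C$, proves $\MH_{*,w}(G)_{C\to\bullet}=0$, and then transfers the vanishing to terminal tope $C$ by chain reversal. Your choice avoids the reversal and its sign check.
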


\begin{proof}
Every tope in such a chain agrees with $C$ outside $S$.  When $X$ is a
covector, these topes are exactly $\cT_X$.  Restriction maps $\cT_X$
bijectively to the topes of $\OM|_S$: lift a tope of the restriction
to a covector $Y$ and use $X\circ Y$ for its inverse image.  Separation
sets are preserved, proving the chain assertion.

If $C\setminus S$ is not a covector, form the posets $T,N$ of
Lemma~\ref{lem:parity-model} with base tope $A=C$.
For $s=\mathbf1_S$, either $s\notin\mathcal C_C$, in which
case $N_{\leq s}$ has a maximum, or the interval $[C,D_s]_G$ is not
facial, in which case the first paragraph of the preceding proof
applies.  Part (2) of Lemma~\ref{lem:acyclic-support} and Alexander
duality now give $\MH_{*,w}(G)_{C\to\bullet}=0$.  Chain reversal,
with the sign $(-1)^{k(k+1)/2}$ in degree $k$, gives the same vanishing
with terminal tope $C$.
\end{proof}

\subsection{Periodicity and face flags}

\begin{proposition}[Periodicity]\label{prop:homology-periodicity}
If $E\ne\varnothing$, $r=r(M)$, and $w\geq\mathbf1_E$, then
\[
 \MH_{k,w}(G)_{\bullet\to C}
 \cong\MH_{k-r,w-\mathbf1_E}(G)_{\bullet\to C}.
\]
Negative homological degrees are interpreted as zero.
\end{proposition}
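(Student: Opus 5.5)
We will follow the strategy of \cite[Theorem~3.2]{Koizumi}, which now applies without a realization because Lemma~\ref{lem:parity-model} and Lemma~\ref{lem:acyclic-support} replace the geometric input.

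\emph{Reduction to a fixed initial tope.} Since a chain with crossing vector $w$ and terminal tope $C$ starts at $\eps_w C$, and $\eps_{w-\mathbf1_E}C=-\eps_wC$, the statement is equivalent to
\[
 \MH_{k,w}(G)_{A\to\bullet}\cong\MH_{k-r,w-\mathbf1_E}(G)_{-A\to\bullet}.
\]
Alternatively one can pass through chain reversal, as in the proof of Proposition~\ref{prop:homology-localization}. By Lemma~\ref{lem:parity-model}, the left side is $\widetilde H^{|w|-k-1}(\Delta N^{A}_{\leq w})$. The right side is $\widetilde H^{|w|-n-k+r-1}(\Delta N^{-A}_{\leq w-\mathbf1_E})$, where $n=|E|$. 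The case $w=\mathbf1_E$, where the right side is length zero, will be handled directly: the target group is $\Z$ in degree $0$ at the single tope $-A$.

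\emph{Comparing the two posets.} Translation $\beta\mapsto\beta-\mathbf1_E$ identifies $\{\beta\ge\mathbf1_E\}$ with $\N^E$. Reduction mod two shifts by $\mathbf1_E$, and $\mathcal C_{-A}=\mathbf1_E-\mathcal C_A$. Hence the translation carries $N^A\cap\{\beta\geq\mathbf1_E\}$ onto $N^{-A}$. The plan is to show that the inclusion
\[
 P=N^A_{\leq w}\cap\{\beta\geq\mathbf1_E\}\subseteq N^A_{\leq w}
\]
changes reduced cohomology by an $(n-r)$-fold suspension. The idea is to filter by the set $Z(\beta)=\{e:\beta_e=0\}$ and glue the lower ideals $N_{\leq w\wedge(\infty\cdot\mathbf1_{E\setminus Z})}$ using the two poset facts of \cite[Lemmas~4.1--4.2]{Koizumi}. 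An ideal whose zero set $Z$ is such that $A\setminus(E\setminus Z)$ is not a covector is acyclic by Lemma~\ref{lem:acyclic-support}(2), so it contributes nothing. The remaining ideals correspond to covectors $X\leq A$. They assemble into a homotopy colimit indexed by the face poset of the closed cell of $A$ (a ball), whose link structure is described by Theorem~\ref{thm:tope-interval}: the sphere $S^{r-2}$ appears for the full interval $[A,-A]$.

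\emph{Degree count.} The resulting shift should be $(|w|-k-1)-(|w|-n-k+r-1)=n-r$. This means the complement, as seen from $P$, behaves like a join with a sphere of dimension $n-r-1$. That sphere comes from $\partial D_{\mathbf1_E}$ of dimension $n-1$ minus the facial sphere $S^{r-2}$ (of dimension $r-2$), via Alexander duality: $n-1-(r-2)-2=n-r-1$.

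\emph{Main obstacle.} The hardest step will be the middle gluing. We must verify that each intersection of the ideals is again of the same form and acyclic, except for the single facial contribution, and we must identify the surviving term precisely with a suspension rather than only its homology. As a first attempt, we will mimic \cite[Proposition~5.14]{Koizumi}: induct on $|Z(\beta)|$, using \eqref{eq:facial-meet} to control meets of the form $\beta\wedge q$. If that fails, we will instead compare both sides through the Mayer--Vietoris spectral sequence of the cover by these ideals.
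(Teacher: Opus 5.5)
\textbf{There is a genuine gap: the step that carries the whole content of periodicity is left as a plan.} Your reductions are correct:
\begin{itemize}
\item $\eps_{w-\mathbf1_E}C=-\eps_wC$ turns the claim into $\MH_{k,w}(G)_{A\to\bullet}\cong\MH_{k-r,w-\mathbf1_E}(G)_{-A\to\bullet}$;
\item translation by $\mathbf1_E$ carries $N^A\cap\{\beta\ge\mathbf1_E\}$ onto $N^{-A}$;
\item the shift $n-r$ is right;
\item the case $w=\mathbf1_E$ is Theorem~\ref{thm:tope-interval} with $X=0$.
\end{itemize}
What is missing is a proof that $\widetilde H^{j}(\Delta N^A_{\le w})\cong\widetilde H^{j-(n-r)}(\Delta P)$. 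You offer a plan with a fallback, and the plan as sketched does not close, for two reasons.

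First, write $N^A_{\le w}=L\sqcup P$, where $L$ consists of the elements with a zero coordinate. Gluing the ideals $N_{\le w\wedge(\infty\cdot\mathbf1_{E\setminus Z})}$ with $Z\neq\varnothing$ can at best compute $L$. But $\Delta N^A_{\le w}$ also contains chains mixing elements of $L$ and $P$, and you give no mechanism for handling them.

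Second, the ideals you cannot discard are $N^A_{\le w|_{Z(X)}}$ for covectors $0\ne X<A$. By Lemma~\ref{lem:parity-model} and Proposition~\ref{prop:homology-localization}, their cohomology is the magnitude homology of $\OM|_{Z(X)}$ in crossing vector $w|_{Z(X)}$, which is the very thing being computed. Using them would require an induction on the whole homology calculation, plus control of Mayer--Vietoris differentials over the faces of $A$. None of this is supplied.

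\textbf{The missing idea is to isolate the single vertex $\mathbf1_E$}, i.e.\ the moment a chain reaches $-A$ having crossed every element exactly once. The input the paper takes from \cite[Section~5]{Koizumi}, and records explicitly, is that the subcomplex of $\MC_{*,w}(G)_{A\to\bullet}$ spanned by chains whose cumulative crossing vectors avoid $\mathbf1_E$ is acyclic. Granting this, the argument finishes quickly:
\begin{itemize}
\item In the quotient, deleting the vertex at $\mathbf1_E$ lands in that subcomplex.
\item So the quotient is $\MC_{*,\mathbf1_E}(G)_{A\to-A}\otimes\MC_{*,w-\mathbf1_E}(G)_{-A\to\bullet}$.
\item The first factor has homology $\Z$ in degree $r$ by Theorem~\ref{thm:tope-interval}, and K\"unneth gives the shift.
\end{itemize}

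In your Alexander-dual language, that acyclicity says precisely that $N^A_{\le w}\cup\{\mathbf1_E\}$ is acyclic. Mayer--Vietoris around the vertex $\mathbf1_E$ then identifies $\widetilde H^*(\Delta N^A_{\le w})$ with the reduced cohomology of its link $\Delta N^A_{<\mathbf1_E}*\Delta P$. Here $\Delta N^A_{<\mathbf1_E}=\Delta N^A_{\le\mathbf1_E}$ is a homology $(n-r-1)$-sphere by your own base case and Lemma~\ref{lem:parity-model}. That is exactly the join with an $(n-r-1)$-sphere you anticipated. What your proposal lacks is a proof of this acyclicity, and filtering by the zero sets of $\beta$ does not address it.
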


\begin{proof}
This is proved for real hyperplane arrangements in \cite[Section 5]{Koizumi}.
The argument uses only covector combinatorics and
Theorem~\ref{thm:tope-interval}, so it also applies to oriented matroids.
The same argument also shows that the subcomplex spanned by chains
whose cumulative crossing vectors avoid $\mathbf1_E$ is acyclic
\cite[Section~5.4]{Koizumi}.
\end{proof}

\begin{proof}[Proof of Theorem~\ref{thm:homology}]
We induct on $|w|$, simultaneously for all simple oriented matroids.
For $w=0$, the chain $(C)$ generates $\MH_{0,0}(G)_{\bullet\to C}$,
and the empty flag is the unique face flag of crossing vector zero.
All other degrees vanish.
Suppose $w\ne0$, and set
\[
 S=\supp(w),\qquad X=C\setminus S,\qquad
 r_S=r_M(S),\qquad w'=w|_S-\mathbf1_S.
\]
For every face flag $(X_1\geq\cdots\geq X_m)$ of crossing vector $w$,
\[
 S=\bigcup_{i=1}^m Z(X_i)=Z(X_m),\qquad X_m=X.
\]
If $X$ is not a covector, the flag set is empty and
Proposition~\ref{prop:homology-localization} gives
$\MH_{*,w}(G)_{\bullet\to C}=0$.

If $X$ is a covector, restriction identifies the covector intervals
$[X,C]$ and $[0,C|_S]$ of $\OM$ and $\OM|_S$, respectively,
and preserves zero sets and their ranks.  Restricting a flag and
deleting its last face therefore gives a bijection
\[
 \begin{aligned}
 \Flag_{k,w}(\OM)_C
 &\longrightarrow\Flag_{k-r_S,w'}(\OM|_S)_{C|_S},\\
 (X_1,\ldots,X_m)&\longmapsto
 (X_1|_S,\ldots,X_{m-1}|_S).
 \end{aligned}
\]
On homology, Propositions~\ref{prop:homology-localization}
and~\ref{prop:homology-periodicity} give
\[
 \begin{aligned}
 \MH_{k,w}(G)_{\bullet\to C}
 &\cong\MH_{k,w|_S}(G(\OM|_S))_{\bullet\to C|_S}\\
 &\cong\MH_{k-r_S,w'}(G(\OM|_S))_{\bullet\to C|_S}\\
 &\cong\Z[\Flag_{k-r_S,w'}(\OM|_S)_{C|_S}].
 \end{aligned}
\]
The last isomorphism follows by induction, since $\OM|_S$ is simple
and $|w'|=|w|-|S|<|w|$.  The flag bijection proves the first assertion.

As observed before the theorem, the crossing vector recovers the
nested zero sets as $Z(X_i)=U_{m-i+1}(w)$, where $m=\max_e w_e$.
The resulting flag exists exactly when $(w,C)$ is admissible and
has degree $\kappa(w)$.  This proves \eqref{eq:coordinate-homology}.
\end{proof}

\subsection{Initial oriented matroids and the cohomology basis}
\label{subsec:cohomology-basis}

We next express admissibility in terms of a tope set.  This requires
initial oriented matroids that may have parallel elements, even
though $\OM$ itself is simple.

\begin{lemma}\label{lem:flag-topes}
Let $\mathcal N$ be a loopless oriented matroid and let
$\varnothing=F_0\subsetneq \cdots\subsetneq F_s=E$
be a flag of flats.  Set
\[
 \mathcal N_{\mathcal F}
 =\bigoplus_{i=1}^s(\mathcal N|_{F_i})/F_{i-1}.
\]
Then, as sets of sign vectors,
\[
 \cT(\mathcal N_{\mathcal F})
 =\{D\in\cT(\mathcal N):D\setminus F_i\in\cL(\mathcal N)
                    \text{ for all }i\}.
\]
In particular, these topes are topes of $\mathcal N$.
\end{lemma}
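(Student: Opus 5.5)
We first make the covectors of $\mathcal N_{\mathcal F}$ explicit. By the descriptions of restriction and contraction in Section~\ref{sec:preliminaries}, a covector of $(\mathcal N|_{F_i})/F_{i-1}$ is $Y|_{\Delta_i}$, where $\Delta_i=F_i\setminus F_{i-1}$ and $Y\in\cL(\mathcal N)$ vanishes on $F_{i-1}$. Covectors of a direct sum are concatenations of covectors of the summands. Each summand is loopless because $F_{i-1}$ is a flat of $\mathcal N|_{F_i}$. Hence the topes of $\mathcal N_{\mathcal F}$ are exactly the sign vectors $D\in\{+,-\}^E$ such that, for each $i$, some $Y_i\in\cL(\mathcal N)$ satisfies $Y_i|_{F_{i-1}}=0$ and $Y_i|_{\Delta_i}=D|_{\Delta_i}$. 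We prove the two inclusions with this description.

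For ``$\supseteq$'', let $D\in\cT(\mathcal N)$ with every $D\setminus F_i$ a covector. Take $Y_i=D\setminus F_{i-1}$. It is a covector, it vanishes on $F_{i-1}$, and it agrees with $D$ on $\Delta_i$. (The case $i=1$ uses $D\setminus\varnothing=D$.) So $D$ is a tope of $\mathcal N_{\mathcal F}$.

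For ``$\subseteq$'', take such $Y_1,\ldots,Y_s$ and compose them in the order used in \eqref{eq:flag-topes}:
\[
 D'=Y_s\circ Y_{s-1}\circ\cdots\circ Y_1\in\cL(\mathcal N).
\]
On $\Delta_i$, the factors $Y_j$ with $j>i$ vanish because $\Delta_i\subseteq F_i\subseteq F_{j-1}$. The first factor that can be nonzero there is $Y_i$, which equals $D$ and has no zeros on $\Delta_i$. So $D'=D$, and $D$ is a covector of $\mathcal N$ with no zero coordinates; since $\mathcal N$ is loopless, $D$ is a tope. For the covector condition, the same argument applied to $Y_s\circ\cdots\circ Y_{i+1}$ gives a covector that vanishes on $F_i$ and agrees with $D$ on each $\Delta_j$ with $j>i$. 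That covector is $D\setminus F_i$. The case $i=s$ gives the zero covector.

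The main point needing care is the order of composition: later factors must vanish on earlier blocks so that each block's signs come from its own $Y_i$. Beyond that, we only need that the description of contraction covectors is valid for flats, and that looplessness makes the zero-free covectors exactly the topes. This also shows these topes are topes of $\mathcal N$.
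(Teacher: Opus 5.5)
Your proof is correct and follows essentially the same route as the paper. For $\supseteq$ you restrict $D\setminus F_{i-1}$ to the $i$th block. For $\subseteq$ you lift each block tope to a covector $Y_i$ vanishing on $F_{i-1}$, compose $Y_s\circ\cdots\circ Y_1$ to recover $D$, and identify $Y_s\circ\cdots\circ Y_{i+1}$ with $D\setminus F_i$, while making explicit the block-by-block checks the paper leaves implicit.
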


\begin{proof}
For a tope on the right, restrict $D\setminus F_{i-1}$ to $F_i$ to
obtain the tope of the $i$th factor.  Conversely, lift a tope $D_i$ of
that factor to a covector $Y_i$ of $\mathcal N$ which is zero on
$F_{i-1}$ and equals $D_i$ on $F_i\setminus F_{i-1}$.  Such a lift
exists by the definitions of restriction and contraction.  The
composition
\[
 D=Y_s\circ Y_{s-1}\circ\cdots\circ Y_1
\]
has the prescribed sign on every layer, and hence is a tope.  Moreover
$Y_s\circ\cdots\circ Y_{i+1}=D\setminus F_i$; for $i=s$ use the zero
covector.  These are the required incidences.
\end{proof}

Recall that $\mathcal W(M)$ consists of the nonnegative integer weights
$w$ for which $M_w$ is loopless, equivalently those whose nonempty upper
level sets are flats.  For such $w$, arrange the distinct upper level
sets in increasing order and add $\varnothing,E$ as necessary to
obtain a flag $\mathcal F(w)$.  Define
\begin{equation}\label{eq:initial-oriented-matroid}
 \OM_w=\bigoplus_{i=1}^s(\OM|_{F_i})/F_{i-1},\qquad
 \cT_w=\cT(\OM_w)\subseteq\cT(\OM).
\end{equation}
The underlying matroid is $M_w$: a basis maximizes $w$ precisely when
it meets every upper level set in a basis of that set.  This is the
same characterization as the bases of the displayed direct sum.
By Lemma~\ref{lem:flag-topes}, $(w,C)$ is admissible exactly when
$C\in\cT_w$.

From now on cohomology has coefficients in $\Ftwo$.  The universal
coefficient theorem and Theorem~\ref{thm:homology} give a distinguished
basis, since a one-dimensional vector space over $\Ftwo$ has a unique
nonzero element.  Denote that element by
\[
 u_{w,C}\in
 \MH^{\kappa(w),w}(G;\Ftwo)_{\eps_w C\to C}
 \qquad(C\in\cT_w).
\]
Set $H_w=\MH^{\kappa(w),w}(G;\Ftwo)$ for $w\in\mathcal W(M)$ and
$H_w=0$ otherwise.  All other cohomological degrees of crossing vector
$w$ vanish.  We obtain the canonical vector-space isomorphism
\begin{equation}\label{eq:Phi}
 \Phi_w:\operatorname{Fun}(\cT_w,\Ftwo)\xrightarrow{\ \cong\ }H_w,
 \qquad f\longmapsto\sum_{C\in\cT_w}f(C)u_{w,C}.
\end{equation}
At $w=0$ this is the usual identification of degree-zero cohomology
with functions on vertices.  At positive weights it is an
identification of vector spaces; the cup product adds crossing vectors.

\subsection{The cup product}

\begin{theorem}\label{thm:cup-product}
For admissible $(\alpha,B)$ and $(\beta,C)$, put
$\gamma=\alpha+\beta$.  Then
\begin{equation}\label{eq:cup-product}
 u_{\alpha,B}\smile u_{\beta,C}=
 \begin{cases}
 u_{\gamma,C},&B=\eps_\beta C,\quad(\gamma,C)\text{ admissible},\quad
 \kappa(\gamma)=\kappa(\alpha)+\kappa(\beta),\\
 0,&\text{otherwise}.
 \end{cases}
\end{equation}
The unit is $\sum_{C\in\cT(\OM)}u_{0,C}$, and the $u_{0,C}$ are
pairwise orthogonal idempotents.
\end{theorem}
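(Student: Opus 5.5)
The plan is to detect every cup product by pairing it with explicit cycles. The grading does most of the work. The cup product of a cochain supported on chains $\eps_\alpha B\to B$ of crossing vector $\alpha$ with one supported on $\eps_\beta C\to C$ of crossing vector $\beta$ is, by \eqref{eq:cup-definition}, supported on concatenated chains that pass through the shared vertex. Such a chain ends at $C$, has crossing vector $\gamma=\alpha+\beta$, and has cohomological degree $\kappa(\alpha)+\kappa(\beta)$.

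This gives the vanishing cases quickly. If $B\ne\eps_\beta C$, the splitting vertex $B_a$ would have to equal both $B$ and $\eps_\beta C$, so the product is already zero on cochains. Otherwise the product lies in $\MH^{\kappa(\alpha)+\kappa(\beta),\gamma}(G;\Ftwo)_{\eps_\gamma C\to C}$. By Theorem~\ref{thm:homology} and universal coefficients, this group vanishes unless $(\gamma,C)$ is admissible and $\kappa(\gamma)=\kappa(\alpha)+\kappa(\beta)$. When both conditions hold the group is $\Ftwo$, so it remains only to show that the product is nonzero.

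To prove nonvanishing I would write down a cycle $z_{\gamma,C}$ representing the generator of $\MH_{\kappa(\gamma),\gamma}(G)_{\bullet\to C}$, and then evaluate $u_{\alpha,B}\smile u_{\beta,C}$ on it.
\begin{itemize}
\item Following the proof of Theorem~\ref{thm:homology}, the generator is built by localization and periodicity from the face flag $(C\setminus U_m(\gamma)\ge\cdots\ge C\setminus U_1(\gamma))$. Concretely, it is an iterated product of fundamental cycles of the facial spheres from Theorem~\ref{thm:tope-interval}, one for each level set $U_j(\gamma)$, concatenated in order of the levels.
\item Each such spherical cycle is a signed sum of geodesic chains through the topes of a localization. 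Modulo $2$ the signs disappear, so it is simply a sum of chains.
\item The equality $\kappa(\gamma)=\kappa(\alpha)+\kappa(\beta)$ should force $\alpha$ and $\beta$ to be compatible. By \eqref{eq:kappa}, subadditivity of $\wt_M$ is an equality exactly when some basis is simultaneously $\alpha$- and $\beta$-maximal. Translated to level sets, this should mean that the flag of $\gamma$ refines so that its concatenated cycle splits at one tope into a cycle of type $(\alpha,\eps_\beta C)$ followed by one of type $(\beta,C)$.
\item The evaluation then equals $\langle u_{\alpha,\eps_\beta C},z_\alpha\rangle\langle u_{\beta,C},z_\beta\rangle=1$.
\end{itemize}

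An alternative I would try if the explicit cycles become unwieldy is to follow Liu's computation \cite{Liu}. That route shows the Künneth-type map $\MH_{*,\gamma}\to\MH_{*,\alpha}\otimes\MH_{*,\beta}$ dual to the cup product is nonzero mod $2$ via the Alexander-duality model of Lemma~\ref{lem:parity-model}. There, splitting a chain at the cumulative vector $\alpha$ corresponds to restricting $D_\gamma$ to the product $D_\alpha\times D_\beta$.

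The unit and idempotent claims are immediate at length zero. Each $u_{0,C}$ is the indicator of the single-vertex tuple $(C)$. By \eqref{eq:cup-definition}, $u_{0,C}\smile f$ restricts $f$ to tuples starting at $C$, and $f\smile u_{0,C}$ restricts $f$ to tuples ending at $C$. Hence $\sum_C u_{0,C}$ acts as the identity, and the $u_{0,C}$ are orthogonal idempotents; this also agrees with the formula, since $\kappa(0)=0$ and $\eps_0C=C$.

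The main obstacle is the compatibility step. It must show that $\kappa$-additivity exactly characterizes when the generating cycle for $\gamma$ decomposes as a concatenation of the generators for $\alpha$ and $\beta$ through the tope $\eps_\beta C$. That requires a careful matching of the level-set flags of $\alpha$, $\beta$ and $\gamma$, and of the corresponding covectors $C\setminus U_j$. This is the point where I expect to need a genuine argument rather than bookkeeping, probably an induction on $|\gamma|$ that mirrors the localization and periodicity induction in the proof of Theorem~\ref{thm:homology}.
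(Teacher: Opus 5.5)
Your vanishing argument and your check of the unit and idempotents are correct. They are the necessity argument the paper alludes to right after the statement. The gap is in the nonvanishing case: the cycle you propose to evaluate on does not exist as described. Concatenation at a shared tope does not commute with $\partial$. The differential of $x*y$ also deletes the junction vertex whenever the last step of $x$ and the first step of $y$ change disjoint sets of coordinates.

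Here is a counterexample. Take the free matroid on $\{1,2\}$, whose tope graph is the square, with $\gamma=(2,1)$ and $C=(+,+)$. Write $[i_1i_2i_3]$ for the chain that starts at $\eps_\gamma C=(+,-)$ and changes coordinate $i_j$ at step $j$. The sphere cycle of $U_1(\gamma)=E$ followed by the step across $U_2(\gamma)=\{1\}$ is $[121]+[211]$. But mod $2$ one has $\partial([121]+[211])=((+,-),(-,-),(+,+))\neq0$. The actual generator of $\MH_{3,\gamma}(G;\Ftwo)_{\bullet\to C}$ is $[112]+[121]+[211]$. The correction $[112]$ has cumulative crossing vectors $(1,0),(2,0),(2,1)$, so it avoids $\one_E$. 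Thus your iterated concatenations of sphere cycles are cycles only modulo chains avoiding the prescribed cumulative vectors. Turning them into honest cycles requires the acyclicity of that subcomplex (the ``nonspecial chains''), which your plan never uses.

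The same example shows that $\kappa$-additivity does not make the generator split at one tope. Take $\alpha=\one_{\{1\}}$ and $\beta=\one_E$, so $B=\eps_\beta C=(-,-)$ and $\kappa(\gamma)=3=\kappa(\alpha)+\kappa(\beta)$. Then $\alpha$ is not among the partial sums $(1,1),(2,1)$ of the level-set flag of $\gamma$. Write $\delta_c$ for the indicator cochain of a chain $c$. The class $u_{\alpha,B}$ is represented by $\delta_{[1]}$. The class $u_{\beta,C}$ is represented by $\delta_{c_{12}}$ or by $\delta_{c_{21}}$, the two geodesics from $(-,-)$ to $(+,+)$. The resulting representatives of $u_{\alpha,B}\smile u_{\beta,C}$ are $\delta_{[112]}$ and $\delta_{[121]}$. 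These pair with your $[121]+[211]$ to $0$ and $1$ respectively, so your evaluation is ill-defined; both pair to $1$ with the true generator.

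The correct statement lives in homology. Splitting at cumulative vector $\alpha$ gives a chain map $\MC_{*,\gamma}\to\MC_{*,\alpha}\otimes\MC_{*,\beta}$; this is the dual of the Leibniz rule for $\smile$. One must prove that this map sends the generator to the tensor product of the generators at $\eps_\beta C$, modulo boundaries. Your fallback names this dual map but gives no argument, and that is the whole content of the theorem.

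The paper does not construct cycles at all. It imports Liu's proof \cite[Theorem~7.3]{Liu} and observes that it has only two realizability-dependent inputs. One is Liu's homology computation, replaced by Theorem~\ref{thm:homology}. The other is the acyclicity of nonspecial chains, supplied by the periodicity argument after localization to the support. Any route through Liu must check exactly these two points.
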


The three conditions have different roles.  The first matches the
terminal tope of the first factor with the initial tope of the second.
The second says that the resulting endpoint summand exists.  The third
matches its cohomological degree with the sum of the degrees of the
factors.  Their necessity follows from the cochain product and
Theorem~\ref{thm:homology}.

\begin{proof}
This is proved for real hyperplane arrangements by Liu \cite[Theorem~7.3]{Liu}.
The proof essentially applies to oriented matroids as well, except for two parts that rely on realizability.
The first is the homology computation used in \cite[Theorem~4.3]{Liu}, which can be replaced by Theorem~\ref{thm:homology}.
The second is the acyclicity of nonspecial chains used in
\cite[Theorem~4.15]{Liu}, supplied by the proof of
Proposition~\ref{prop:homology-periodicity} after localization to the support.
The rest of the argument uses only oriented matroids.
\end{proof}

The following tope compatibility will be used in Section~\ref{sec:filtration}.

\begin{lemma}\label{lem:tope-compatibility}
Let $\alpha,\beta,\gamma\in\mathcal W(M)$, $C\in \cT(\OM)$ and suppose that
$$
\gamma=\alpha+\beta,\qquad \kappa(\gamma)=\kappa(\alpha)+\kappa(\beta).
$$
If $(\gamma,C)$ is admissible, then $(\alpha,\varepsilon_\beta(C))$ and $(\beta,C)$ are admissible.
\end{lemma}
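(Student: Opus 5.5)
The proof splits into a matroid part and an oriented-matroid part. The matroid part turns the degree condition $\kappa(\gamma)=\kappa(\alpha)+\kappa(\beta)$ into a statement about level sets. The oriented-matroid part uses Lemma~\ref{lem:flag-topes}, which says $(w,D)$ is admissible iff $D\in\cT_w$, to transfer the tope $C\in\cT_\gamma$ to $\cT_\beta$ and $\eps_\beta C$ to $\cT_\alpha$.

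\emph{Step 1 (matroid side).} For every basis $B$, $\sum_{e\in B}\gamma_e=\sum_B\alpha_e+\sum_B\beta_e\le\wt_M(\alpha)+\wt_M(\beta)$. Hence $\kappa(\gamma)=\kappa(\alpha)+\kappa(\beta)$ holds iff some basis maximizes $\alpha$ and $\beta$ simultaneously. In that case the $\gamma$-maximal bases are exactly the common maximizers, so $\mathcal B(M_\gamma)=\mathcal B(M_\alpha)\cap\mathcal B(M_\beta)$. By the greedy criterion after \eqref{eq:kappa}, such a basis meets every $U_j(\alpha)$, every $U_j(\beta)$ and every $U_j(\gamma)$ in a basis of that set. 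From this I will deduce the following rank identity: for each level set $U=U_j(\beta)$ (a flat, since $\beta\in\mathcal W(M)$) and each block $G_i\setminus G_{i-1}$ of the flag $\mathcal F(\gamma)$,
\[
 r\bigl((U\cap G_i)\cup G_{i-1}\bigr)-r(G_{i-1})
 \;=\;r_{M_\gamma}\bigl(U\cap(G_i\setminus G_{i-1})\bigr)\quad\text{summed over } i
 \;=\;r(U).
\]
Equivalently, $U$ is a flat of $M_\gamma$, and it decomposes along the blocks into flats $U_i$ of the minors $(M|G_i)/G_{i-1}$. The same holds for each $U_j(\alpha)$.

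\emph{Step 2 (oriented side, the case of $\beta$).} Since $C\in\cT_\gamma=\cT(\OM_\gamma)$, the sign vectors $C\setminus G_i$ are covectors of $\OM$ for all $i$. I want to show $C\setminus U\in\cL(\OM)$ for $U=U_j(\beta)$. Restrict $C$ to the $i$th block, viewed as a tope of $(\OM|G_i)/G_{i-1}$. By Step 1, $U_i$ is a flat of that minor, and I expect the restricted tope with the coordinates of $U_i$ zeroed to be a covector there. This is the main obstacle. My first attempt is to realize it inside $\OM$ by strong elimination between $C\setminus G_{i-1}$ and $\eps_{U_i}$-type modifications. The alternative is to compose the covectors $C\setminus G_{i-1}$ with a covector of zero set $\cl(U\cap G_i\cup G_{i-1})$, and then cut away the extra zeros $G_{i-1}\setminus U$ using the covectors from lower blocks. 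Assembling the block covectors uses compositions $Y_s\circ\cdots\circ Y_1$ as in the proof of Lemma~\ref{lem:flag-topes}. The delicate point is to order the compositions so that the prescribed zero set $U$ is not filled in by lower factors. I would control this by choosing each lower factor to vanish on $U$ as well, which is possible because $U\cap G_{i-1}$ is the level set relevant for the earlier blocks (induction on $i$). The result is $C\setminus U_j(\beta)\in\cL$ for all $j$, i.e.\ $(\beta,C)$ is admissible.

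\emph{Step 3 (the case of $\alpha$).} I repeat Step 2 with the tope $\eps_\beta C$. First I check that $\eps_\beta C\in\cT_\gamma$. Reversing signs on the coordinates where $\beta$ is odd is a composite of the reversals $\eps_{U_j(\beta)}$. Each such reversal maps $\cT_\gamma$ to itself: in $\OM_\gamma$, the sign vector $C\setminus U_j(\beta)$ is a covector by Step 2, and $C\mapsto(C\setminus U)\circ(-C)$ is a tope. This equals reversal on $U$, because within the direct sum, reversing a tope on a union of components of the blocks' flats gives a tope. Once $\eps_\beta C\in\cT_\gamma$, Step 2 with the roles of $\alpha$ and $\beta$ exchanged gives admissibility of $(\alpha,\eps_\beta C)$.
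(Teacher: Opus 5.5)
Your overall plan matches the paper's. The paper shows that for every $D\in\cT_\gamma$ and every nonempty upper level set $F$ of $\alpha$ or $\beta$, both $D\setminus F\in\cL(\OM)$ and $\eps_F D\in\cT_\gamma$ hold, quoting Liu, Proposition~3.10 and Lemma~3.12. It then composes the reversals $\eps_{U_j(\beta)}$ exactly as in your Step~3.

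The difference is that you try to prove these two facts yourself, and that part is missing. Step~2 says ``I expect'' and lists two tentative routes, and Step~1 gives you too little to support either. Knowing that $U_i=U\cap(G_i\setminus G_{i-1})$ is a \emph{flat} of the block $(M|G_i)/G_{i-1}$ does not make the zeroed tope a covector. For the lines $y=0$, $x=0$, $x=y$ in $\R^2$, the tope $(+,+,+)$ and the flat $\{2\}$ give $(+,0,+)\notin\cL$.

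The missing observation is that $U=U_j(\beta)$ is a \emph{separator} of $M_\gamma$. Since a common maximizing basis exists, $M_\gamma=(M_\alpha)_\beta$, and this is loopless. By the discussion around \eqref{eq:initial-minors}, applied to the loopless matroid $M_\alpha$, every level set of $\beta$ is a flat of $M_\alpha$. Moreover $M_\gamma$ is the direct sum of the minors between consecutive level sets of $\beta$. An oriented matroid whose underlying matroid splits is the direct sum of the corresponding restrictions, so $\OM_\gamma$ splits along $U$. This gives $\eps_UC\in\cT_\gamma$ and $C\setminus U\in\cL(\OM_\gamma)$ at once. It is also what your Step~3 tacitly uses when it speaks of ``a union of components.'' Note too that your rank identity, combined with the submodular inequalities $r(U_i\cup G_{i-1})-r(G_{i-1})\le r(U\cap G_i)-r(U\cap G_{i-1})$, forces equality for every $i$.

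Even so, $C\setminus U\in\cL(\OM_\gamma)$ does not give $C\setminus U\in\cL(\OM)$. In the same example with the flag $\varnothing\subsetneq\{1\}\subsetneq E$, the vector $(+,0,0)$ is a covector of $\OM|\{1\}\oplus\OM/\{1\}$ but not of $\OM$. Step~3 blurs this distinction in the other direction: it invokes Step~2, which is about $\cL(\OM)$, to produce a covector of $\OM_\gamma$. Your plan of choosing the lower factors to vanish on $U$ needs, at the least, that $G_{i-1}\cup U$ be a flat of $M$ whose contraction behaves correctly on the $i$th block. This is not shown.

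One way to close the gap uses the equalities above and looplessness of $M_\gamma$. Check that
\[
 \varnothing=U\cap G_0\subseteq U\cap G_1\subseteq\cdots\subseteq U\cap G_s=U
 \subseteq U\cup G_1\subseteq\cdots\subseteq U\cup G_s=E
\]
is a flag of flats of $M$. Then check that its minors coincide with the summands $\mathcal N_i|U_i$ and $\mathcal N_i/U_i$ of $\OM_\gamma$, where $\mathcal N_i=(\OM|G_i)/G_{i-1}$. In each case one covector set contains the other and the underlying matroids agree, so the Las Vergnas--Zaslavsky count forces the tope sets to be equal. Lemma~\ref{lem:flag-topes} for this flag then yields $C\setminus U\in\cL(\OM)$. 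The same argument handles the level sets of $\alpha$ at $\eps_\beta C$.
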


\begin{proof}
Apply \cite[Proposition~3.10 and Lemma~3.12]{Liu} to the nonempty
upper level sets of $\alpha$ and $\beta$, with repetitions.  Their
incidence vectors sum to $\gamma$ and their ranks sum to
$\kappa(\gamma)$.  These results give $D\setminus F\in\cL(\OM)$
and $\eps_FD\in\cT_\gamma$ for every $D\in\cT_\gamma$ and every
such flat $F$.  Composing the reversals for the upper level sets of
$\beta$ gives $\eps_\beta C\in\cT_\gamma$.  Applying the covector
assertion at $C$ and $\eps_\beta C$ proves admissibility.
\end{proof}

\section{The Varchenko--Gelfand filtration and the motivic zeta function}
\label{sec:filtration}

Let $\OM$ be a simple oriented matroid with underlying matroid $M$ of
rank $r$, and put $G=G(\OM)$.
Throughout this section cohomology has coefficients in $\Ftwo$.
The preceding section identifies the nonzero crossing-vector components
of cohomology with functions on the topes of initial oriented matroids.
Applying polynomial degree to those functions extends the classical
Varchenko--Gelfand filtration from length zero to every length.

We use the canonical classes $u_{w,C}$ from
Section~\ref{subsec:cohomology-basis} to define the filtration on
mod-$2$ magnitude cohomology.
We obtain a canonical filtered algebra for each $\OM$, preserved by
relabeling and reorientation; Theorem~\ref{thm:associated-graded}
shows that its associated graded algebra depends only on the labeled
underlying matroid $M$.
In the present paper, the filtration is not constructed for integral
coefficients: an integral cup product formula remains conjectural even in the
realizable setting \cite[Conjecture~7.4]{Liu}.

\subsection{Construction and multiplicativity}
For $w\in\mathcal W(M)$, recall the initial oriented matroid $\OM_w$,
its tope set $\cT_w$, and the canonical isomorphism
\[
 \Phi_w:V(\OM_w)\xrightarrow{\sim}H_w,
 \qquad H_w=\MH^{\kappa(w),w}(G;\Ftwo),
\]
given by $\Phi_w(f)=\sum_{C\in\cT_w}f(C)u_{w,C}$.
Put $H_w=0$ for $w\notin\mathcal W(M)$.
Define
\begin{equation}\label{eq:extended-filtration}
 F_pH_w=\Phi_w(P_pV(\OM_w)),\qquad
 F_p\MH^{k,\ell}(G;\Ftwo)
 =\bigoplus_{\substack{w\in\mathcal W(M)\\|w|=\ell,\ \kappa(w)=k}}
 F_pH_w.
\end{equation}
The initial matroid $M_w$ has rank $r$, so $F_{-1}=0$ and $F_r=\MH$
in every bidegree.
At $w=0$, the classes $u_{0,C}$ are the characteristic functions of
individual topes, and \eqref{eq:extended-filtration} is the ordinary
Varchenko--Gelfand filtration.

\begin{theorem}\label{thm:filtered-algebra}
The filtration \eqref{eq:extended-filtration} is multiplicative:
\[
 F_p\MH^{k,\ell}(G;\Ftwo)\smile
 F_q\MH^{k',\ell'}(G;\Ftwo)
 \subseteq F_{p+q}\MH^{k+k',\ell+\ell'}(G;\Ftwo).
\]
It is preserved by relabeling and reorientation of $\OM$.
\end{theorem}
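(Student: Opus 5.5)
Since the filtration is defined componentwise on the $H_w$ and the cup product sends $H_\alpha\otimes H_\beta$ into $H_{\alpha+\beta}$, it suffices to treat a single pair $\alpha,\beta\in\mathcal W(M)$ with $\gamma=\alpha+\beta$. By Theorem~\ref{thm:cup-product}, the product is zero unless $\kappa(\gamma)=\kappa(\alpha)+\kappa(\beta)$ and $\gamma\in\mathcal W(M)$, so we may assume both. In that case we compute the product in tope coordinates. For $f\in V(\OM_\alpha)$ and $g\in V(\OM_\beta)$, the formula of Theorem~\ref{thm:cup-product} gives
\[
 \Phi_\alpha(f)\smile\Phi_\beta(g)
 =\sum_{C\in\cT_\gamma}f(\eps_\beta C)\,g(C)\,u_{\gamma,C}.
\]
Here Lemma~\ref{lem:tope-compatibility} guarantees that for $C\in\cT_\gamma$ we have $\eps_\beta C\in\cT_\alpha$ and $C\in\cT_\beta$, so both evaluations make sense. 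Hence
\[
 \Phi_\alpha(f)\smile\Phi_\beta(g)=\Phi_\gamma\bigl((f\circ\eps_\beta)|_{\cT_\gamma}\cdot g|_{\cT_\gamma}\bigr).
\]

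The key step is to show that the two operations appearing here preserve polynomial degree in the Heaviside functions. The first operation is restriction from $\cT_\alpha$ (or $\cT_\beta$) to $\cT_\gamma\subseteq\cT(\OM)$. Each $\cT_w$ is a subset of $\{+,-\}^E$, and $h_e$ is simply the coordinate function. A polynomial of degree at most $p$ in the $h_e$ therefore restricts to a polynomial of the same degree on any subset, so restriction maps $P_p$ to $P_p$. The second operation is precomposition with the sign reversal $\eps_\beta$. It sends $h_e$ to $h_e$ or $1+h_e$, which is affine, so it also preserves $P_p$. Precomposition and restriction commute, because $\eps_\beta$ maps $\cT_\gamma$ into $\cT_\alpha$. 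Finally, pointwise products satisfy $P_p\cdot P_q\subseteq P_{p+q}$, and together these facts give multiplicativity. The only point to verify carefully is that the sign-vector identifications in Lemma~\ref{lem:flag-topes} are literal inclusions of sign vectors, so that $h_e$ means the same function on every $\cT_w$.

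For naturality under relabeling, a permutation of $E$ transports covectors, initial oriented matroids, and the canonical classes $u_{w,C}$, since these classes are unique nonzero elements. It also permutes the $h_e$, so each $F_p$ is preserved. Reorientation on a set $R$ is an isometry of tope graphs given by $C\mapsto\eps_R C$. It preserves crossing vectors and sends $u_{w,C}$ to $u_{w,\eps_R C}$, again by uniqueness. On functions it acts by precomposition with $\eps_R$, which is affine in the $h_e$ and so preserves $P_p$.

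I expect the main obstacle to be bookkeeping in the first step. The first factor of the product is evaluated at the initial tope $\eps_\beta C$, not at $C$, and one must confirm that this matches the convention $u_{\alpha,B}\in\MH_{\eps_\alpha B\to B}$. This requires using $B=\eps_\beta C$ from Theorem~\ref{thm:cup-product} and checking that admissibility is exactly what Lemma~\ref{lem:tope-compatibility} supplies.
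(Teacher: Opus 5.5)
Your proposal is correct and follows essentially the same route as the paper's proof. You reduce to a compatible pair $\alpha,\beta$ satisfying \eqref{eq:compatible-weights}, use Theorem~\ref{thm:cup-product} together with Lemma~\ref{lem:tope-compatibility} to get $\Phi_\alpha(f)\smile\Phi_\beta(g)=\Phi_\gamma\bigl(C\mapsto f(\eps_\beta C)g(C)\bigr)$, note that $h_e\circ\eps_\beta$ is $h_e$ or $1+h_e$ so Heaviside degree is preserved, and handle relabeling and reorientation by permuting the generators or replacing some $h_e$ by $1+h_e$.
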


\begin{proof}
Fix $\alpha,\beta\in\mathcal W(M)$ and put $\gamma=\alpha+\beta$.
The target component vanishes unless
\begin{equation}\label{eq:compatible-weights}
 \gamma\in\mathcal W(M),\qquad
 \kappa(\gamma)=\kappa(\alpha)+\kappa(\beta).
\end{equation}
The inequality $\kappa(\alpha+\beta)\leq
\kappa(\alpha)+\kappa(\beta)$ always holds: for each basis, its
$\alpha$- and $\beta$-weights are bounded by the corresponding maxima.
Equality means that some basis maximizes both weights; in that case
the bases maximizing $\alpha+\beta$ are exactly those maximizing both
$\alpha$ and $\beta$.
Under these conditions Lemma~\ref{lem:tope-compatibility} shows that
$(\alpha,\varepsilon_\beta C)$ and $(\beta,C)$ are admissible for
every $C\in\cT_\gamma$.
The cup product formula of Theorem~\ref{thm:cup-product} therefore reads
\begin{equation}\label{eq:function-cup-product}
 \Phi_\alpha(f)\smile\Phi_\beta(g)
 =\Phi_\gamma\bigl(C\longmapsto f(\eps_\beta C)g(C)\bigr).
\end{equation}
On Heaviside functions,
\[
 h_e(\eps_\beta C)=h_e(C)+(\beta_e\bmod2).
\]
Replacing any variable by itself plus a constant preserves polynomial
degree.
Thus if $f$ and $g$ have Heaviside degree at most $p$ and $q$,
respectively, the product function in \eqref{eq:function-cup-product}
has degree at most $p+q$.
Summing over crossing vectors proves multiplicativity.
Relabeling permutes the Heaviside generators, and reorientation replaces
some of them by $1+h_e$.
Both operations preserve the degree filtration and the canonical
endpoint classes.
\end{proof}

There is also a description using only multiplication by degree-zero
cohomology.
Let $c_w=\sum_{C\in\cT_w}u_{w,C}$.
Since right multiplication by a vertex function evaluates it at the
terminal tope, one has
\begin{equation}\label{eq:intrinsic-filtration}
 F_pH_w=c_w\smile P_p\MH^{0,0}(G;\Ftwo).
\end{equation}
Indeed, every Heaviside polynomial on $\cT_w$ is the restriction of the
same polynomial on $\cT(\OM)$.
Although $H_w$ is identified with a function space, its pointwise
multiplication is not its cup product: cup products add crossing vectors.

\subsection{Recovering the zeta function}
Introduce the series
\begin{equation}\label{eq:three-variable-series}
 \mathscr H_{\OM}(s,t,T)=
 \sum_{p,k,\ell\geq0}
 \dim_{\Ftwo}\gr_p^F\MH^{k,\ell}(G;\Ftwo)\,s^pt^kT^\ell.
\end{equation}
The three variables distinguish polynomial degree, cohomological degree,
and length, in that order.
Its coefficients in $T$ are polynomials in $s,t$.

\begin{theorem}\label{thm:zeta-filtration}
For every simple oriented matroid $\OM$ with underlying matroid $M$,
\begin{equation}\label{eq:zeta-from-filtration}
 Z_M(x,T)=\mathscr H_{\OM}(-x^{-1},x^{-1},T).
\end{equation}
Equivalently, for each $\ell\geq0$,
\[
 [T^\ell]Z_M(x,T)=\sum_{p,k}(-1)^p
 \dim_{\Ftwo}\gr_p^F\MH^{k,\ell}(G;\Ftwo)\,x^{-p-k}.
\]
\end{theorem}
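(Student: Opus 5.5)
The plan is to compare the two sides weight by weight: both are sums over $w\in\N^E$ with $|w|=\ell$, and the identity should hold termwise. By \eqref{eq:extended-filtration}, the filtered cohomology splits as a direct sum over $w\in\mathcal W(M)$ of the filtered pieces $F_pH_w$, sitting in bidegree $(\kappa(w),|w|)$. Theorem~\ref{thm:homology} and the universal coefficient theorem show that every other cohomological degree of crossing vector $w$ vanishes. Hence
\[
 \mathscr H_{\OM}(s,t,T)=\sum_{w\in\mathcal W(M)}\Bigl(\sum_{p}\dim\gr_p^F H_w\,s^p\Bigr)t^{\kappa(w)}T^{|w|}.
\]
On the zeta side, weights outside $\mathcal W(M)$ contribute zero, because $\chi_{M_w}=0$ when $M_w$ has a loop. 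Recall also $\kappa(w)=\wt_M(w)$. It therefore suffices to prove, for each $w\in\mathcal W(M)$,
\[
 \chi_{M_w}(x)\,x^{-r}=\sum_p(-1)^p\dim\gr^F_pH_w\,x^{-p}.
\]
The factor $x^{-\kappa(w)}$ matches $t^{\kappa(w)}$ under $t=x^{-1}$.

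Next, $\Phi_w$ is a linear isomorphism carrying $P_pV(\OM_w)$ onto $F_pH_w$ by definition. It therefore induces isomorphisms $\gr_p^PV(\OM_w)\cong\gr_p^FH_w$, so $\dim\gr_p^FH_w=b_p(M_w)$. Here I use that the underlying matroid of $\OM_w$ is $M_w$, which is loopless for $w\in\mathcal W(M)$ and has rank $r$, as noted after \eqref{eq:initial-oriented-matroid}. Then \eqref{eq:vg-characteristic}, applied to the loopless oriented matroid $\mathcal N=\OM_w$ (parallel elements are allowed there), gives
\[
 \chi_{M_w}(x)=\sum_p(-1)^pb_p(M_w)x^{r-p}.
\]
Multiplying by $x^{-r}$ yields exactly the required termwise identity. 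Summing over $w$ with $|w|=\ell$ gives the coefficientwise statement, and with it \eqref{eq:zeta-from-filtration} in $\Z[x^{\pm1}][[T]]$.

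The only delicate point is the bookkeeping that each nonzero $H_w$ is concentrated in the single cohomological degree $\kappa(w)=\wt_M(w)$. One must also check that the substitution $s=-x^{-1}$, $t=x^{-1}$ produces the sign $(-1)^p$ and the exponent $-p-\kappa(w)$, matching $x^{-r-\wt_M(w)}\cdot x^{r-p}$. Both follow from Theorem~\ref{thm:homology} and the definition of $\kappa$, so I expect no real obstacle beyond verifying that \eqref{eq:vg-characteristic} applies to the possibly nonsimple $\OM_w$. That applicability is exactly the generality in which Section~\ref{subsec:vg-background} was stated.
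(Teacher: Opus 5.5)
Your proposal is correct and follows the paper's proof essentially verbatim. You expand $\mathscr H_{\OM}$ over $w\in\mathcal W(M)$ using Theorem~\ref{thm:homology}, identify $\dim\gr_p^F H_w=b_p(M_w)$ through $\Phi_w$, and apply \eqref{eq:vg-characteristic} to the possibly non-simple $\OM_w$ termwise, which is exactly the generality Section~\ref{subsec:vg-background} provides.
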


\begin{proof}
Theorem~\ref{thm:homology} and the construction give
\begin{equation}\label{eq:H-weight-expansion}
 \mathscr H_{\OM}(s,t,T)=
 \sum_{w\in\mathcal W(M)}
 \left(\sum_{p=0}^r b_p(M_w)s^p\right)t^{\kappa(w)}T^{|w|}.
\end{equation}
By \eqref{eq:vg-characteristic}, the summand indexed by $w$ becomes
\[
 \sum_{p=0}^r(-1)^p b_p(M_w)x^{-p-\kappa(w)}T^{|w|}
 =\chi_{M_w}(x)x^{-r-\wt_M(w)}T^{|w|}
\]
after the stated substitution.
The terms with $w\notin\mathcal W(M)$ in
\eqref{eq:zeta-definition} vanish, so the two sums agree.
\end{proof}

At $T=0$, this theorem says
\[
 x^{-r}\chi_M(x)=\sum_p(-1)^p
 \dim_{\Ftwo}\gr_p^P V(\OM)\,x^{-p},
\]
the classical characteristic-polynomial formula for the
Varchenko--Gelfand filtration.
At $x=-1$ it gives
\begin{align*}
 Z_M(-1,T)
 &=\mathscr H_{\OM}(1,-1,T)\\
 &=\sum_{k,\ell}(-1)^k\dim_{\Ftwo}\MH^{k,\ell}(G;\Ftwo)T^\ell
 =\Mag(G;T).
\end{align*}
The substitution $s=1$ sums the dimensions of the associated graded
pieces and hence forgets the filtration, while $t=-1$ takes the
Euler characteristic.
This recovers the magnitude-refined Las Vergnas--Zaslavsky theorem from
Section~\ref{sec:zaslavsky} and explains its normalization.

\begin{example}
For the free matroid $U_{r,r}$, every nonnegative weight contributes,
$M_w=M$, and $\kappa(w)=|w|$.
Hence
\begin{equation}\label{eq:free-H}
 \mathscr H_{\OM}(s,t,T)=\frac{(1+s)^r}{(1-tT)^r},\qquad
 Z_{U_{r,r}}(x,T)=\left(\frac{x-1}{x-T}\right)^r.
\end{equation}
The tope graph is the $r$-cube, and $x=-1$ gives its magnitude
$2^r/(1+T)^r$.
\end{example}

\begin{example}
The matroid $U_{2,3}$, whose tope graph is the six-cycle, illustrates
why cohomological degree and length must be kept separate.
Its contributing weights fall into the two families below, extending
Example~\ref{ex:initial-u23}:
\begin{center}
\begin{tabular}{@{}llll@{}}
\toprule
Weight & Initial matroid & Degree $k$ & Length $\ell$ \\
\midrule
$a\one_E$ & $U_{2,3}$ & $2a$ & $3a$ \\
$a\one_E+b\one_{\{e\}}$ & $U_{1,1}\oplus U_{1,2}$ & $2a+b$ & $3a+b$ \\
\bottomrule
\end{tabular}
\end{center}
Here $a\geq0$, $b\geq1$, and $e\in E$.
The graded-dimension polynomials of the two initial matroids are
$1+3s+2s^2$ and $(1+s)^2$, respectively.
Summing over $a$ and $b$ produces the denominators $1-t^2T^3$ and
$1-tT$, and the three choices of $e$ give
\begin{equation}\label{eq:triangle-H}
 \mathscr H_{\OM}(s,t,T)=
 \frac{1+3s+2s^2+3(1+s)^2\dfrac{tT}{1-tT}}{1-t^2T^3}.
\end{equation}
At length three the weights $\one_E$ and $3\one_{\{e\}}$ give,
respectively,
\[
 \sum_p\dim\gr_p^F\MH^{2,3}s^p=1+3s+2s^2,\qquad
 \sum_p\dim\gr_p^F\MH^{3,3}s^p=3(1+s)^2.
\]
Their contributions to the zeta function are
$x^{-4}(x-1)(x-2)$ and $3x^{-5}(x-1)^2$.
The additional power of $x^{-1}$ in the second term is recorded by the
cohomological degree, independently of Heaviside degree.
\end{example}

\subsection{Associated graded algebra}
Passing to the associated graded removes the lower-degree terms
introduced by sign reversal in \eqref{eq:function-cup-product}.
The resulting algebra admits a description in terms of $M$ alone.

\begin{theorem}\label{thm:associated-graded}
There is a canonical isomorphism of spaces graded by polynomial degree,
cohomological degree, and crossing vector
\begin{equation}\label{eq:graded-os-sum}
 \gr_F\MH^{*,*}(G;\Ftwo)
 \cong\bigoplus_{w\in\mathcal W(M)}\OS^\bullet(M_w;\Ftwo),
\end{equation}
where the two asterisks on the left denote cohomological degree and
length, further refined by crossing vector and polynomial degree.
The summand of weight $w$ lies in cohomological degree
$\kappa(w)$ and length $|w|$.
If $\alpha,\beta$ satisfy \eqref{eq:compatible-weights}, there are
homomorphisms of graded algebras
\[
 \rho_{\alpha,\gamma}:\OS^\bullet(M_\alpha;\Ftwo)
 \longrightarrow\OS^\bullet(M_\gamma;\Ftwo),\qquad a_e\longmapsto a_e,
\]
and similarly for $\beta$.
Under \eqref{eq:graded-os-sum}, the product of $a$ in weight $\alpha$
and $b$ in weight $\beta$ is
\begin{equation}\label{eq:graded-product}
 a*b=\rho_{\alpha,\gamma}(a)\rho_{\beta,\gamma}(b)
 \quad\text{in weight }\gamma=\alpha+\beta
\end{equation}
when \eqref{eq:compatible-weights} holds, and is zero otherwise.
In particular, the associated graded algebra is commutative and depends
only on the labeled matroid $M$.
\end{theorem}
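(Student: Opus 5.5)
The plan is to pass the isomorphisms $\Phi_w$ through the filtration and then use the Varchenko--Gelfand theorem \eqref{eq:vg-os} for each initial oriented matroid $\OM_w$. By \eqref{eq:extended-filtration}, $F_pH_w=\Phi_w(P_pV(\OM_w))$, so $\Phi_w$ induces an isomorphism $\gr_p^F H_w\cong\gr_p^P V(\OM_w)$. The matroid $\OM_w$ is loopless, possibly with parallel elements, and its underlying matroid is $M_w$. Hence \eqref{eq:vg-os} applies and gives $\gr_P V(\OM_w)\cong\OS^\bullet(M_w;\Ftwo)$ via $[h_e|_{\cT_w}]\leftrightarrow a_e$. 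Summing over $w\in\mathcal W(M)$ gives \eqref{eq:graded-os-sum}. The summand $H_w$ lies in cohomological degree $\kappa(w)$ and length $|w|$, and these are the only nonzero components by Theorem~\ref{thm:homology}. This isomorphism is canonical: $\Phi_w$ uses the canonical classes $u_{w,C}$, and the VG isomorphism is fixed on the generators $h_e$.

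Next I construct $\rho_{\alpha,\gamma}$. Under \eqref{eq:compatible-weights}, the bases of $M_\gamma$ are exactly the bases maximizing both $\alpha$ and $\beta$, as noted in the proof of Theorem~\ref{thm:filtered-algebra}. So every basis of $M_\gamma$ is a basis of $M_\alpha$. I want the map $a_e\mapsto a_e$ to respect the Orlik--Solomon relations. My first approach is to avoid checking circuits directly and argue through functions instead. By Lemma~\ref{lem:tope-compatibility}, $\cT_\gamma\subseteq\cT_\alpha$, so restricting functions $V(\OM_\alpha)\to V(\OM_\gamma)$ is an algebra map. It sends $h_e$ to $h_e$, and so it preserves polynomial degree. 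Its associated graded is therefore an algebra map $\gr_PV(\OM_\alpha)\to\gr_PV(\OM_\gamma)$ sending $[h_e]\mapsto[h_e]$. Via \eqref{eq:vg-os} this is $\rho_{\alpha,\gamma}$. It is well defined automatically and depends only on $M$, because the OS presentations depend only on $M_\alpha$ and $M_\gamma$. For $\beta$, Lemma~\ref{lem:tope-compatibility} also gives $\cT_\gamma\subseteq\cT_\beta$, and the same construction applies.

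For the product formula, I use \eqref{eq:function-cup-product}. It gives
\[
\Phi_\alpha(f)\smile\Phi_\beta(g)=\Phi_\gamma\bigl(C\mapsto f(\eps_\beta C)g(C)\bigr).
\]
Suppose $f\in P_p$ and $g\in P_q$. Substituting $h_e\mapsto h_e+(\beta_e\bmod 2)$ changes a degree-$p$ polynomial only by terms of lower degree. So $f\circ\eps_\beta\equiv f|_{\cT_\gamma}$ modulo $P_{p-1}V(\OM_\gamma)$. Here one must take care that $f$ is written as a Heaviside polynomial on $\cT_\alpha$ and then restricted; this is legitimate because $\eps_\beta C\in\cT_\alpha$. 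Hence the class in $\gr_{p+q}$ is $[f|_{\cT_\gamma}][g|_{\cT_\gamma}]=\rho_{\alpha,\gamma}([f])\rho_{\beta,\gamma}([g])$. When \eqref{eq:compatible-weights} fails, Theorem~\ref{thm:cup-product} makes the product zero already, because either $\gamma\notin\mathcal W(M)$ or the degree condition fails.

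Commutativity follows from this formula and the symmetry of \eqref{eq:compatible-weights} in $\alpha$ and $\beta$. Over $\Ftwo$, the OS algebra is commutative. Dependence on $M$ alone follows because every ingredient is matroidal: the sets $\mathcal W(M)$, the initial matroids $M_w$, $\kappa$, the OS algebras, and the maps $\rho$. The main obstacle I expect is this lower-order-term bookkeeping: showing that the sign-reversal shift $\eps_\beta$ truly disappears in the associated graded once functions are transported between the different tope sets $\cT_\alpha$ and $\cT_\gamma$. The inclusion $\cT_\gamma\subseteq\cT_\alpha$ from Lemma~\ref{lem:tope-compatibility} is what makes this transport possible.
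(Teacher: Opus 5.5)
Your proposal is correct and follows the paper's proof essentially step for step. It uses the vector-space isomorphism from \eqref{eq:vg-os} applied to each $\OM_w$, and obtains $\rho_{\alpha,\gamma}$ as the associated graded of restriction along $\cT_\gamma\subseteq\cT_\alpha$, which makes the circuit relations automatic. It then lets the shift $\eps_\beta$ disappear in $\gr$ and derives commutativity from the symmetry of \eqref{eq:compatible-weights}.

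One small step should be made explicit. As stated, Lemma~\ref{lem:tope-compatibility} gives $\eps_\beta C\in\cT_\alpha$, not $C\in\cT_\alpha$. The inclusion $\cT_\gamma\subseteq\cT_\alpha$ instead comes from applying that lemma with $\alpha$ and $\beta$ interchanged, which is allowed because its hypotheses are symmetric.
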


\begin{proof}
Apply \eqref{eq:vg-os} to each $\OM_w$ to obtain the vector-space
isomorphism.
Under \eqref{eq:compatible-weights}, restriction along
$\cT_\gamma\subseteq\cT_\alpha$ gives a filtered homomorphism of
pointwise function algebras, preserving each Heaviside generator.
Its associated graded map is $\rho_{\alpha,\gamma}$; this construction
in particular proves that the indicated assignment respects the circuit
relations.
The argument for $\beta$ is identical.
For a Heaviside polynomial $f$ of degree at most $p$, the difference
$f(\eps_\beta C)-f(C)$ has degree at most $p-1$.
Therefore the sign reversal in \eqref{eq:function-cup-product}
disappears on the associated graded, giving \eqref{eq:graded-product}.
Every map is determined by its action on the standard generators and
thus by the relevant initial matroids.
The criterion \eqref{eq:compatible-weights} is symmetric in
$\alpha,\beta$, and the Orlik--Solomon algebras over $\Ftwo$ are
commutative; this proves the last assertion.
\end{proof}

\bibliographystyle{amsplain}
\bibliography{references}
\end{document}